\newtheorem{theorem}{Theorem}
\newtheorem{lemma}{Lemma}
\newtheorem{corollary}{Corollary}
\newtheorem{proposition}{Proposition}
\newtheorem{definition}{Definition}
\newtheorem{example}{Example}
\newcommand{\textcite}{\cite}
\newcommand{\G}{\mathcal{G}}
\newcommand{\A}{\mathcal{A}}
\newcommand{\N}{\mathcal{N}}
\renewcommand{\P}{\mathbb{P}}
\newcommand{\X}{\mathcal{X}}
\newcommand{\Y}{\mathcal{Y}}
\newcommand{\W}{\mathcal{W}}
\newcommand{\pr}{\mathbb{P}}
\newcommand{\PP}{\mathbb{P}}
\newcommand{\E}{\mathbb{E}}
\newcommand{\HH}{\mathbb{H}}
\newcommand{\diag}{\mbox{diag}}
\newcommand{\rank}{\mbox{rank}}
\newcommand{\Var}{\text{Var}}
\newcommand{\trace}{{\rm tr} }
\newcommand{\Exp}{\text{Exp}}
\newcommand{\PA}{P_{A^T}}
\newcommand{\UU}{U}
\newcommand{\argmin}{\text{argmin}}
\newcommand{\true}{_{\rm true}}
\newcommand{\exact}{_{\rm exact}}
\newcommand{\Phix}{\Phi_{\omega}}
\newcommand{\ETA}{\eta}
\newcommand{\f}{{\mathsf{f}}}
\newcommand{\g}{{\mathsf{g}}}
\newcommand{\fk}{{x\true} }
\newcommand{\gk}{{y\exact}}
\begin{document}

\title{Consistency of the posterior distribution in generalised linear inverse problems}

\author{Natalia Bochkina\\ University of Edinburgh, UK}

\maketitle



\section{Introduction}

\subsection{Bayesian approach to ill-posed inverse problems}\label{sec:intro1}

Inverse  problems encountered in nature are commonly ill-posed:
their solutions fail to satisfy at least one of the three
desiderata of existing, being unique, and being
stable. Thus, in the case of linear inverse problems, the focus is
not on a unique solution $x$ of
\begin{equation}\label{eq:linip} y=Ax, \end{equation} for given
matrix $A$ and data vector $y$, but rather on the corresponding
space of solutions.

Even when the solution $x$ to (\ref{eq:linip}) exists
and is unique for each possible $y$, lack of stability means that
the solution can be extremely sensitive to small errors, either in
the observed $y$ or in numerical computations for solving the
equations. This has obvious deleterious consequences for the
practical value of solutions. To circumvent this, the inverse
problem is typically regularised, that is, re-formulated to
include additional criteria, such as smoothness of
the solution. If the data is observed with error, i.e. there is a probability distribution of observations has density (with respect to the Lebesgue or  counting measure) $p(y \mid A x)$, then the regularised solution to the inverse problem is
 \begin{equation}\label{eq:linpen}
 x = \argmin [ -2\log p(y \mid Ax)+ \lambda\,\text{pen}(x)]
 \end{equation}
 where $\lambda$ a positive constant
determining the trade-off between accuracy and smoothness. For example, for independent Gaussian random errors with variance $\sigma^2$, the equation above gives a regularised least squares solution since $-2\log p(y \mid Ax) = ||Ax - y||^2/\sigma^2$.
Such solutions make sense, and are commonly used, whether we
regard the error in the data used as deterministic or stochastic
in nature. The best known regularisation for ill-posed inverse problems is Tikhonov regularisation with pen$(x) =  ||x||^2$  \cite{Tikh63}. For reviews of regularisation methods and further details, see \textcite{NychkaCox}, \textcite{IP96} and \textcite{KaipioSomersalo}.

In practice, $x$ and $y$ are finite dimensional vectors that represent discretisation, or finite dimensional approximation, of functions $\f$ and $\g$ respectively of an infinite dimensional problem $\g = \A \f$, and matrix $A$ is the corresponding discretisation of operator $\A$. There are two main discretisation frameworks considered in the literature. One is discretisation on functions and an integral operator $\A$ on a grid: for $\f: \mathbb{R}^{d} \to \mathbb{R}$,  $\g: \mathbb{R}^{d} \to \mathbb{R}$, the discretisation occurs at points $u_j \in \mathbb{R}^{d}$, $j=1,\ldots,p$, and $t_i \in \mathbb{R}^{d}$, $i=1,\ldots,n$, respectively, i.e. we have $x_j = \f(u_j)$, $y_i = \g(t_i)$. An integral operator can be written as
$$
(\A \f)(t) = \int K(t, u) \f(u) du
$$
for some function $K(t, u)$, and then $A_{ij} = K(t_i, u_j)$. For studies of this and other types of discretisation effect, see \textcite{JSDiscr}, \textcite{MathePerDiscr} and \textcite{MPdiscr}. \textcite{KaipioDiscr} studied discretisation in Bayesian models with Gaussian error and Gaussian prior; they also showed that an appropriately chosen discretisation can regularise an ill-posed problem.

Another type of discretisation is achieved by a using finite dimensional approximations of $\f$ and $\g$ in (possibly different) orthonormal bases of $L^2(d\mu)$ $\{\phi_j\}_{j=1}^{\infty}$ and $\{\psi_j\}_{j=1}^{\infty}$ with some measure $\mu$. Suppose that there exist $(x_j)_{j=1}^{\infty}$ and $(y_i)_{i=1}^{\infty}$ such that
$$
\f(u) =\sum_{j=1}^{\infty} x_{j} \phi_j(u),\quad   \g(t)= \sum_{i=1}^{\infty} y_{i} \psi_i(t),
$$
and that the operator $\A$ can be written as
$$
(\A \f)(t) =\sum_{j=1}^{\infty} \sum_{i=1}^{\infty} a_{ij} \langle \f, \phi_j\rangle \psi_i(t)
$$
for some coefficients $(a_{ij})$. Then, given a finite number of observed coefficients of $\g$, $y=(y_1\ldots, y_n)$, the finite dimensional problem becomes
$ y = A x$ where  $x=(x_1,\ldots, x_p)$ and $A \in \mathbb{R}^{n\times p}$ is a matrix with entries $A_{ij} = a_{ij}$, and the recovered coefficients $x$ are used to construct an approximation to $\f$. Given $n$, the number of coefficients $p$ necessary to recover function $\f$ well enough depends on smoothness of $\f$ and on ill-posedness of $\A$.

The latter framework is often used if an infinite-dimensional operator  $\A: \, \HH_{\f} \to \HH_{\g}$ is compact, $\f\in \HH_{\f}$, $\g \in \HH_{\g}$, where $\HH_{\f}$ and $\HH_{\g}$ are separable Hilbert spaces  with orthonormal bases $\{\phi_j\}_{j=1}^{\infty}$ and $\{\psi_j\}_{j=1}^{\infty}$ that are the eigenfunctions of self-adjoint operators $\A^* \A$ and $\A \A^*$, respectively (here $\A^*$ is conjugate to operator $\A$). The corresponding estimator of $\f$ is called a spectral cutoff estimator \cite{Munketal}. This is the framework considered, among others, in \textcite{JSpet},  \textcite{DiggleHall}, \textcite{MairRuym}, \textcite{vdVaart2011}, \textcite{Stuart_Mild}, and \textcite{Ray_Inv}. Other orthonormal bases can be used, e.g. wavelet or wavelet-vaguelette bases (e.g. \textcite{JohnstoneInv}, \textcite{CavalierPET}).


Smoothness, or other `regular' behaviour of the solution to an
inverse problem, is a prior assumption on the
unknown $x$, information about the model parameters known or
assumed before the data are observed. To use such information is
thus to accept that the required solution must combine data with prior
information. In a statistical context
the best-established principle for doing this is the
Bayesian paradigm, in which all sources of variation, uncertainty
and error are quantified using probability.

From this  perspective, the solution to (\ref{eq:linpen}) is
immediately recognisable -- it is the maximum a posteriori (MAP)
estimate of $x$, the mode of its posterior distribution in a
Bayesian model in which the data $y$ are modelled with a
Gaussian distribution with expectation $Ax$, with
constant-variance uncorrelated errors, and in which the prior
distribution of $x$ has negative log-density proportional to
$\text{pen}(x)$.

However, the Bayesian perspective brings more than merely a
different characterisation of a familiar numerical solution.
Formulating  a statistical inverse problem as one of inference in
a Bayesian model has great appeal, notably for what this brings in
terms of coherence, the interpretability of regularisation
penalties, the integration of all uncertainties, and the
principled way in which the set-up can be elaborated to encompass
broader features of the context, such as measurement error,
indirect observation, etc. The Bayesian formulation comes close to
the way that most scientists intuitively regard the inferential
task, and in principle allows the free use of subject knowledge in
probabilistic model building (e.g. \textcite{BIP9}, \textcite{BIP2}, \textcite{BIP5}, \textcite{BIP7} and  \textcite{BIP6}). For an interesting philosophical
view on inverse problems, falsification, and the role of Bayesian
argument, see \textcite{Tarantola}. Various Bayesian methods to solve inverse problems in practice have been proposed, for instance, in \textcite{BIP4}, \textcite{BIP1}, \textcite{BIP3}, \textcite{BIP7}, \textcite{BIP8}.
For a review of Bayesian methods in inverse problems, see \textcite{StuartReview}.

\subsection{Consistency of the posterior distribution}


\textcite{DiaconisFriedman} were one of the first to raise possible  issues with consistency of Bayesian estimators in infinite dimensional problems, drawing attention to importance of choosing the prior distribution appropriately. Furthermore, Bayesian estimators that achieve the optimal rate of convergence in the minimax sense under the Gaussian errors have been constructed and studied by \textcite{vdVaart2011} and \textcite{Stuart_Mild} for mildly ill-posed inverse problems, and \textcite{Stuart_Severe} for severely ill-posed inverse problems; in all these cases a conjugate Gaussian prior was used.  Considered estimators were non-adaptive, in the sense that in order to achieve the optimal rate of convergence, the prior distribution must depend on the smoothness of the unknown function. \textcite{vdVaart2011} have also studied the problem of estimating linear functionals that can achieve up the  parametric rate for some functionals, and coverage of the Bayesian credible intervals. \textcite{Ray_Inv} studied the linear inverse problem under the assumption of Gaussian errors with a non-conjugate prior distribution that can be adaptive.  The minimax rates of convergence for linear inverse problems under the assumption of Gaussian noise were established by \textcite{Cavalier}.


Theoretical properties of inverse problems with non-Gaussian errors have been studied very little (as far as we are aware). In a frequentist approach, consistency and the minimax rate of convergence of estimators in inverse problems with Poisson errors was studied by  \textcite{JSpet} (in the context of density estimation), \textcite{CavalierPET} (as a regression problem) where the minimax rate coincided with the minimax rate of convergence under Gaussian errors, and
\textcite{Munketal} considered the case of an abstract error being a Hilbert-space process (a continuous linear operator from a Hilbert space to the probability space $L^2(\Omega, {\cal F}, \pr)$), with examples of white noise Hilbert processes. However, the rate of convergence in nonparametric estimation in non-Gaussian problems can be different, and in fact they can be faster. This has been shown in the direct observations problem, by \textcite{Bouret} for a nonparametric estimator of the intensity of a Poisson process and by \textcite{Chichignoud} for Bayesian-type estimators in the nonparametric regression problem with multiplicative errors.


The motivating example for this paper is a Bayesian solution to the inverse problem arising in tomography with Poisson variation with a non-Gaussian prior where both observations and the solution are discretised to a regular grid \cite{Green}. This example fits in the framework considered in this paper that is described below.

\subsection{Main results of the paper}

{\bf Summary}. In this paper we prove consistency of Bayesian solutions of an ill-posed linear inverse problem in the Ky Fan metric for a general class of likelihoods and prior distributions, with unknown $x\in \X \subseteq \mathbb{R}^p$ that is to be recovered from noisy observations $y \in \Y \subseteq \mathbb{R}^n$. The finite dimensional setting can be achieved by discretisation or approximation of an infinite dimensional problem, and the result applies to the model with increasing dimensions. Likelihood and prior distributions are assumed to be differentiable but without any assumption of finite moments of observations $y$, such as expected value or the variance, thus allowing for possibly non-regular likelihoods. The likelihood belongs to a class of generalised linear inverse problems that includes distributions from exponential family with possibly dependent observations. The prior distribution does not have to be conjugate and may be improper.
We observe quite a surprising phenomenon when applying our result to the spectral approximation framework where it is possible to achieve the parametric rate of convergence, i.e the problem becomes self-regularised.

{\bf Exact problem}. This model is a noisy version of the following exact problem
\begin{equation}\label{eq:exactInDim}
y\exact = G(Ax),
\end{equation}
where  matrix $A \in \mathbb{R}^{n\times p}$ may not be of full rank and function $G: \, A\X \subseteq \mathbb{R}^n \to G(A\X)\subseteq \mathbb{R}^n$ is assumed to be invertible and twice differentiable, and $y\exact = G(Ax\true)$. Due to the stated assumption on function $G$, this inverse problem can be written as a linear problem $G^{-1}(y\exact) = Ax$ with different data. Sets $\X$ and $\Y$ do not have to be open; we do assume however that $x^\star$ defined by \eqref{eq:DefXstar} is an interior point of $\X$ in all sections apart from Section~\ref{sec:boundary}.

Here $A$, $x$ and $y\exact$ can be viewed as a discretised operator and discretised unknown and observed exact functions, respectively, or as the matrix of a finite number of singular values of the infinite dimensional operator $\A$, a vector of $p$ coefficients of the unknown function $\f$ and a vector of $n$ noisy coefficients of the observed function $\g$ in the spectral cutoff estimator, as described in Section~\ref{sec:intro1}.

{\bf Likelihood.} Distribution of noisy observations $y = (y_1,\ldots, y_n)$ is assumed to follow the framework of {\it generalised linear inverse problems} introduced by \textcite{BochkinaGreen}
$$
p_{\tau}(y \mid \eta) = C_{y,\tau} \exp\left\{-\mathring{f}_{y}(\eta)/\tau \right\},
$$
where $\eta = (\eta_1,\ldots, \eta_n)$,    $\tau$ is a dispersion parameter, and the true value of parameter $\eta$ is $y\exact$. The key assumptions about the likelihood are that dispersion $\tau$ can also be interpreted as a level of noise, i.e. that $Y\stackrel{\P}{\to} y\exact$ as $\tau \to 0$, and that function $\mathring{f}_{y}(\eta)$ is three times differentiable and it, together with its derivatives, has a finite limit as $\tau \to 0$. The precise conditions are stated in Sections~\ref{sec:Setup} and \ref{sec:QuadraticApprox}. Note that $\E Y_i$ or $\Var(Y_i)$ do not have to exist for the assumption $Y\stackrel{\P}{\to} y\exact$ as $\tau \to 0$ to hold.

These assumptions hold for a distribution from the exponential family (see Section~\ref{sec:ExpFamily}) with the probability density or mass function
$$
p_{\tau}(y \mid \eta) = \exp\left\{-\sum_{i=1}^n [ y_i b_i(\eta_i) - c_i(\eta_i)]/\tau + d_i(y_i, \tau) \right\}
$$
with $\eta_i =\E Y_i $ which implies $\Var(Y_i) = -\tau [b_i'(\eta_i)]^{-1}$ (see Section~\ref{sec:ExpFamily}).

 Following the exact problem, we use link function $G$: $\eta = G(Ax)$ to introduce dependence of the data on $x$.  Function $G$ can be used to model dependence between observations $y_1,\ldots, y_n$. If matrix $A$ is ill-posed, i.e. matrix $A^T A$ is not of full rank, then the likelihood for $x$
 $$
 p_{\tau}(y  \mid x) = C_{y,\tau} \exp\left\{-\mathring{f}_{y}(G(Ax))/\tau \right\},
 $$
 is not identifiable. In the motivating tomography example \cite{Green}, discretised observations $y_i$ have Poisson distribution with the identity link function $G$, $x\in \X = [0,\infty)^p$, and matrix $A$ is discretisation of the Radon operator.



{\bf Prior.} We assume that the prior distribution with smooth density $p(x)$ is of the form $p(x) = C_{\gamma} \exp(-g(x)/\gamma^2)$, $x\in \X \subseteq \mathbb{R}^{p}$, for some function $g$. This prior distribution can be improper. Full conditions imposed on the prior to derive the main result are given in Section~\ref{sec:SectionSetup}. In the motivating example \cite{Green}, the prior density, that is a non-Gaussian pairwise interaction Markov random field, satisfies this condition and is not identifiable with respect to a constant shift, i.e.   vectors in $\X$ with coordinates $x_i$ and $x_i+c$, $i=1,\ldots,p$ for any constant $c$  have the same density.

{\bf Posterior.} The resulting posterior distribution can be written as
$$
p_{\tau}(x\mid y) = C_{y,\tau,\gamma} \exp\left\{-[\mathring{f}_{y}(G(Ax)) + \frac{\tau}{\gamma^2} g(x) ]/\tau \right\}.
$$
The mode of the posterior distribution can be interpreted as a regularised solution of the original exact problem that takes into account a possibly non-Gaussian  and non-additive observation error that was discussed in Section~\ref{sec:intro1}. Other point summaries of the posterior distribution can be used as an estimator, such as the mean or the median of the posterior distribution that can be easier to compute or more robust to error misspecification in practice. In the paper we suppress index $\tau$ to simplify the notation.

{\bf Ky Fan metric.} To measure the convergence of the posterior distribution, we use the approach considered by \textcite{EHK05}, \textcite{HofingerP:07}, \textcite{HofingerP:09} in the context of linear inverse problems which is to metrise weak convergence of the posterior distribution as a random variable
$\mu_{\rm post}(\omega) = p(x \vert Y(\omega))$
using the Ky Fan metric \cite{Fan}.
In particular, the Ky Fan metric $\varepsilon_{\tau}$ between the posterior distribution $\mu_{\rm post}(\omega)$ and a point mass $\delta_{x^\star}$ satisfies, with probability at least $1 - \rho_{\rm K}(Y , y\exact)$,
$$
\P( d(x, x^\star) \leq \varepsilon_{\tau}  \mid Y) > 1 - \varepsilon_{\tau} \quad \text{on} \quad \{\omega: \, d(Y(\omega), y\exact)\leq \rho_{\rm K}(Y, y\exact)\},
$$
where $\rho_{\rm K}(Y, y\exact)$ is the Ky Fan distance between the data \(Y\) and its small noise limit $y\exact$, for some distance $d$, and $x^\star$ is defined by \eqref{eq:DefXstar}.
This metric metrises convergence in probability which is weaker than the almost sure convergence. Convergence rates in this metric are slower than rates under the mean squared error loss (for instance, for a Gaussian distribution there is  unavoidable extra log  factor).
For comparison, condition on the almost sure concentration rate $\varepsilon$ of the posterior distribution is defined as
$\P( d(x, x^\star) \leq M \varepsilon   \mid Y) \stackrel{\P}{\to} 1$ as $\tau \to 0$ for large enough $M$ (e.g., \textcite{GhosalGV}). Using the Ky Fan metric, we have explicit expressions for $M \varepsilon$ and for the event on which convergence takes place. Also, this allows to make weaker assumptions on the model, e.g. $\E Y_i$ and $\Var(Y_i)$ do not have to be finite. As we shall see, in the considered models, the Ky Fan rate of convergence coincides with the almost sure concentration rate up to a log factor. The Ky Fan metric is discussed in Section~\ref{sec:metrics}.

{\bf Main result.} The main result of the paper (Theorem~\ref{th:KyFanPostInt}) is that for sufficiently small noise level $\tau$ and the prior scale $\gamma$ such that both $\gamma$ and $\tau/\gamma^2$ are small, the posterior distribution concentrates around $x^\star$ defined by
\begin{eqnarray}\label{eq:DefXstar}
x^\star = \arg \min_{x\in \X \, Ax = Ax\true} g(x)
\end{eqnarray}
at the Ky Fan rate of convergence that is bounded (up to an absolute constant) by
$$
c \rho_{\rm K}(Y , y\exact) +   \frac{\tau}{ \gamma^2} ||H_{\nu}^{-1} \nabla g(x^\star)|| + \sqrt{R_{\tau}\log(1/R_{\tau}) }
$$
with $R_{\tau} = \tau \trace (H_{\nu}^{-1})$ and
$$
H_{\nu} =    A^T \tilde{G}^T V \tilde{G} A +  \frac {\tau} {\gamma^2} \nabla^2 g(x^\star).
$$
Here $\tilde{G} =  \left(\frac{\partial }{\partial \eta_i } G_j(A x\true)  \right)$, and  matrix $V = \left(\frac{\partial^2 }{\partial \eta_i \partial \eta_j}\mathring{f}_{y\exact}(y\exact)  \right)$, $i,j=1,\ldots,n$, is an analogue of the Fisher information matrix of $Y$ for  $\eta = G(Ax)$ given the value of $x$ generating the data (``the state of nature'') is $x\true$.
 The first term represents random bias that is explicitly stated in Theorem~\ref{th:KyFanPostInt}, the second term is non-random bias caused by the prior and the last term represents the rate of concentration of the posterior distribution around its mode. As all the constants in the upper bound are given explicitly, this result can be applied to the problem with growing dimensions $n$, $p$ and decreasing eigenvalues of $A$, either as a discretisation on a grid or as a spectral cutoff problem (see Section~\ref{sec:pgrows}).



The crucial difference between the rate of convergence for Gaussian and non-Gaussian distributions is that the variance depends not only on the eigenvalues of matrix $A$ and the prior but also on the unknown function. Therefore, the rate of convergence in this case may be different, an in the direct estimation problems discussed above. The nonlinearity introduced by a nonlinear link function $G$ does not play an important role here as long as $G$ is sufficiently smooth, invertible and the eigenvalues of $\tilde{G}$ are bounded away from 0 and infinity.

{\bf Finite dimensional problem}. Applied to the well-posed problem with fixed dimension $p$ where the smallest eigenvalue separated away from zero by a positive constant independent of $\tau$, condition that $\gamma$ is small is not necessary, and the solution is recovered with the parametric (weak) rate of convergence since $R_{\tau} \asymp \tau^{-1/2}$ and the error rate $\tau$ can be interpreted as $1/n$ for Gaussian independent identically distributed random variables (note that the weak parametric rate of convergence is $\sqrt{n/\log n}$). For an ill-posed problem with fixed dimensions, $R_{\tau} \asymp \gamma$ so the smallest upper bound on the rate is of order $[\tau \log(1/\tau)]^{1/3}$ with $\gamma^2= \tau^{2/3}[\log (1/\tau)]^{-1/6}$ which is slower than the rate for a well-posed problem.

{\bf Discretisation on a grid.}
Application of the result to the two frameworks with growing dimensions $n$, $p$ is discussed in Section~\ref{sec:pgrows}. For a function defined on $[0,1]$  discretised on a regular grid,  we must have $p \geq C \left(\frac{\tau}{\log (1/\tau)}\right)^{-2/3}$ to achieve the rate $[\tau \log(1/\tau)]^{1/3}$.

{\bf Spectral approximation.}
Even though our primary motivation for considering non-Gaussian error models is to study consistency of a Bayesian estimator of $\f$ given discretisation on a grid, our model also applies to the spectral cutoff framework. For the Gaussian noise,  we can recover the minimax rate of convergence (up to a log factor) and the rate of \textcite{vdVaart2011} under a Gaussian prior.

For a non-Gaussian distribution of errors, we discover quite a surprising phenomenon.
 If the variance of the noise 
decreases fast enough as a function of frequency $i$ (faster than the eigenvalues of $\A \A^*$), then the problem can become self-regularised and the Bayesian estimator can achieve the parametric Ky Fan rate of convergence, $[\tau \log(1/\tau)]^{1/2}$. This feature is specific to heteroscedastic models where the information about the unknown function is available not only from the mean, as for the Gaussian white noise model. If the variance of the noise does not decrease faster than the eigenvalues of $\A \A^*$, then the problem remains ill-posed but the ill-posedness is less severe. It is illustrated on an example where the noise has Poisson distribution with the identity link: $Y_i/\tau \sim Pois(a_i x_i/\tau)$ where $a_i>0$ are the eigenvalues of $(\A \A^*)^{1/2}$. The following argument can be used to illustrate why the rate of convergence becomes parametric. Under the specified Poisson model,
$$
\Var(Y_i) = \tau a_i {x\true}_i 
$$
which implies that for the simplest estimator $\hat{x}_i = Y_i/a_i$, $\E \hat{x}_i = {x\true}_i$, $\Var(\hat{x}_i) = \tau  {x\true}_i/a_i$.
If $\frac{{x\true}_i}{a_i} \leq C i^{-1- \delta}$ for some $\delta>0$ and all large $i$, and all coefficients $Y_i$ are available, then we have an unbiased estimator of $\f$ with variance
$$
\sum_{i=1}^{\infty} \Var\left(\frac{Y_i}{a_i} \right) = \tau \sum_{i=1}^{\infty} \frac{{x\true}_i}{a_i} \leq C \tau \sum_{i=1}^{\infty} i^{-1-\delta}  \leq \tilde{C} \tau \to 0\quad \text{ as} \quad\tau \to 0.
$$
Hence, in this case the problem becomes self-regularised, and no regularisation by a prior is necessary. A bias caused by truncation to the first $p$ observations can be made small by an appropriate choice of $p$ as a function of $\tau$ and of smoothness of $\f$. A formal argument for the Bayesian estimator is given in Section~\ref{sec:pgrows}. It would be interesting to see if such cases occur in practice.



 We also show that the rate of convergence may change if the limiting point $x^\star$ lies on the boundary of the parameter space for a constrained inverse problem; see Section~\ref{sec:boundary} (for Gaussian noise and Tikhonov regularisation, this problem was studied by \textcite{Kunisch94}). The remarkable property is that the rate of convergence in this case is faster, as long as every coordinate of $x^\star$ lies on the boundary. This is an example that the inverse problem with non-Gaussian errors can be non-regular; for the study of consistency and the rate of convergence in a more general setting where only some of the coordinates are on the boundary see \textcite{BochkinaGreen}.

The paper is organised as follows. In Section~\ref{sec:SectionSetup} we describe the considered Bayesian model. In Section~\ref{sec:metrics} we define the Ky Fan metric and study it for various distributions. In Section~\ref{sec:ConvergenceKyFan} we state the necessary assumptions on the model and an upper bound on the Ky Fan metric between the posterior distribution and $\delta_{x^\star}$, and conditions for consistency of the posterior distribution. In Section~\ref{sec:pgrows}, we show how this result can be applied to approximations of infinite dimensional inverse problems with growing dimensions, namely to discretisation on a grid and spectral approximation  frameworks. In Section~\ref{sec:boundary} we briefly discuss the case when the point of concentration is on the boundary of the parameter space, and conclude with discussion in Section~\ref{sec:discussion}. All proofs are given in the appendix.


\section{Model formulation}\label{sec:SectionSetup}


\subsection{Generalised linear inverse problems (GLIP) }\label{sec:Setup}

We assume that the joint density of the observable responses $Y$ (the likelihood) taking values in $\Y\subseteq \mathbb{R}^n$ (with respect
to Lebesgue or counting measure) takes the form
\begin{eqnarray}\label{eq:Lik}
p\,( y \vert\,  x) = F(y, Ax, \tau) =C_{y,\,\tau} \exp\left\{-\frac 1 {\tau } \tilde{f}_y(Ax)\right\}, \quad y\in \Y,
\end{eqnarray}
that is, that the distribution depends on $x \in \X \subseteq \mathbb{R}^p$ only via $Ax$, where $\tau$ is a scalar dispersion parameter; in the Gaussian model,
$\tau$ is the variance $\sigma^2$. The observed data $y$ are generated from this distribution, with $x=x\true$, and we aim to recover $x\true$ as $\tau \to 0$.

We assume that there is a differentiable invertible link function $G: A\X \to \mathbb{R}^n$ such that $ y\exact = G(A x\true)$. 
Function $\tilde{f}_y(Ax)$ can be written as $\tilde{f}_y(Ax) = \mathring{f}_{y}(G(Ax))$, where $\mathring{f}_{y}(\eta)$ was defined in Introduction, that is, it includes the link function implicitly to simplify the notation. 

We will also introduce $f_y(x) = \tilde{f}_y(Ax)$, for convenience of notation. Here $\tilde{f}_y(\eta)$ is negative log likelihood with respect to parameter $\eta = Ax$
and $f_y(x)$ is negative loglikelihood with respect to the parameter of interest $x$  (both up to the constant $1/\tau$).

We make the following assumptions about the error distribution:
\begin{enumerate}

\item If $Y \sim  F(y, G^{-1}(y\exact), \tau)$, then
    $Y\stackrel{\pr_{x\true}}{\to} y\exact$ as $\tau \to 0$.

\item For all $\mu_0 \in  A\X$, $\tilde{f}_{\mu_0}(\ETA)$
    has a unique minimum over $ A \X $ at $\ETA = G(\mu_0)$.


\end{enumerate}

Assumption (i) states that $\tau$ is not only the dispersion parameter in the model but also a scale parameter for the distribution of $Y$.
Assumption (ii) establishes identifiability of the likelihood with respect to the link parameter $\eta = G(Ax)$. 

Assumption (i) is satisfied by generalised linear models
\textcite{NelderW}, an important class of nonlinear
statistical regression problems,
responses $y_t$, $t=1,2,\ldots,n$ are  drawn
    independently from a one-parameter exponential family of
    distributions in canonical form, with density or
    probability function
\begin{equation}\label{eq:ExpFamilyDef}
p(y_t;\ETA_t,\tau) = \exp\left(  - \frac{y_t b(\ETA_t)-c(\ETA_t)}{\tau} +d(y_t,\tau)\right),
\end{equation}
 for  appropriate functions
$b$, $c$ and $d$ characterising the particular distribution
family. The parameter $\tau$ is a common dispersion parameter
shared by all responses. In generalised linear models commonly $G$ has identical component functions.
 The expectation and the variance of this distribution
are
\begin{equation}\label{eq:ExpFamilyMeanVar}
\E(y_t;\ETA_t,\tau)=\ETA_t=c'(\ETA_t)/b'(\ETA_t),\quad \Var(y_t;\ETA_t,\tau)=\tau \frac{  c''(\ETA_t)b'(\ETA_t) - c'(\ETA_t)b''(\ETA_t)}{[b'(\ETA_t)]^3}.
\end{equation}
Both assumptions (i) and (ii) are satisfied for this example.

\subsection{Bayesian formulation of GLIP}

We adopt a Bayesian paradigm, using a prior distribution with density given by
\begin{eqnarray}\label{eq:Prior}
p\,(x) \propto \exp(-g(x)/\gamma^2) , \quad x\in \X\subset \mathbb{R}^p,
\end{eqnarray}
where $\gamma^2$ is a scalar dispersion parameter for the prior that may depend on
$\tau$; we relate this to the data dispersion parameter $\tau$ by $\gamma^2=\tau/\nu$, and express most of our results below in terms of $\tau$ and $\nu$.
Set of possible values of the parameters $\X$ can be any subset of $\mathbb{R}^p$ that contains a nonempty neighbourhood of $x^\star$.

Therefore, the posterior distribution satisfies
\begin{eqnarray}\label{eq:Posterior}
 p\,(x \vert \, y) \propto \exp(-[\tilde{f}_y(Ax) + \nu \, g(x)]/\tau ), \quad x\in \X.
\end{eqnarray}
For convenience of notation, we also introduce function $h_y(x)$ defined by
$$
h_y(x) = f_y(x)  + \nu \, g(x),
$$
so that $p\,( x \vert\,  y) \propto e^{-h_y(x)/\tau }$. 

To summarise the Bayesian model, we have the following likelihood:
$$
p(y \mid x) = C_{y,\,\tau} \exp\left\{-\frac 1 {\tau } f_y(x)\right\}, \quad y\in \Y,
$$
prior density
$$
p(x) \propto \exp\left\{-\frac 1 {\gamma^2 } g(x)\right\}, \quad x \in \X,
$$
and the posterior density
$$
p\,( x \vert\,  y) \propto \exp\left\{-\frac{ \, f_y(x) + \nu \, g(x)}{\tau} \right\} =  \exp\left\{-\frac{h_y(x)}{\tau} \right\}, \quad x \in \X.
$$
Function $\tilde{f}_y(\eta)$ is assumed to be identifiable, and function $f_y(x)$ may be non-identifiable if rank of matrix $A$ is smaller than $p$. 


We will show that in the limit $\tau \to 0$, the posterior distribution concentrates at point $x^\star$ defined by
\begin{equation}\label{eq:xstar}
x^\star= \arg \min_{Ax = Ax\true} g(x).
\end{equation}
Below we make further assumptions on the likelihood and the prior distribution that are necessary for convergence of the posterior distribution.

\section{Types of convergence and corresponding distances}
\label{sec:metrics}

Convergence in distribution (weak convergence) can be metrised by Prokhorov metric \cite{Dudley}.
\begin{definition} The Prokhorov metric between two measures on a metric space
$(\X,d_\X)$  is defined by
$$\rho_{\rm P}(\mu_1,\mu_2)=\inf\{\varepsilon>0: \mu_1(B)\leq
\mu_2(B^\varepsilon)+\varepsilon \:\forall \text{ Borel }B\}$$
where $B^\varepsilon=\{x:\inf_{z\in B} d_\X(x,z)<\varepsilon\}$.
\end{definition}
This metric can be used to study the weak convergence of the posterior distribution $\mu_{\rm
post}(\omega) = \P_{X \mid Y(\omega)}$ as a measure on $\X$ to its limit {\it for a fixed} data set $Y(\omega)$.
We consider the Euclidean metric $d(x,z) = ||x-z||$ on $\X$. 

To study a weak convergence of the posterior distribution to its limit over all $\omega$, we can use Ky Fan metric that metrised convergence in probability \cite{Dudley}.
\begin{definition}  The Ky Fan metric between two random variables $\xi_1$ and $\xi_2$ in a metric space
$(\W, d_\W)$ is defined by
$$
\rho_{\rm K}(\xi_1,\xi_2) =
\inf\{\varepsilon>0:\P(d_\W(\xi_1(\omega),\xi_2(\omega))>\varepsilon)<\varepsilon\}.
$$
\end{definition}
 Hence, weak convergence of the
posterior distribution $\mu_{\rm post}$ (as a random variable) to
$\delta_{x^\star}$, the point mass at $x^\star$, is equivalent to
its convergence in the Ky Fan metric, where the metric space
$(\W, d_\W)$ is a space of probability distributions on $\X$ equipped with the
Prokhorov metric.

Now we give the Ky Fan distance or its upper bound for some distributions.
First we consider a rescaled Poisson distribution.
\begin{lemma}\label{lem:KyFanPoisson}
Consider independent random variables $Y_t/\tau \sim Pois(\mu_t/\tau)$, $t=1,\ldots,n$, $\mu_t >0$. Denote $M=4 \sum_t \mu_t$.

Then, for $\tau$ such that $M \tau < 1/e$,
$$
\rho_{\rm K}(Y, \mu) = \sqrt{-\tau M\log(\tau M)} (1+w_{\tau}),
$$
where $w_{\tau} = o(1)$ as $\tau \to 0$ and $w_{\tau} \leqslant 0$.
\end{lemma}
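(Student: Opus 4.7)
\bigskip
\noindent\textbf{Proof plan.} The strategy is to identify $\varepsilon_\tau = \inf\{\varepsilon > 0 : \P(\|Y - \mu\| > \varepsilon) < \varepsilon\}$ by first establishing an exponential concentration bound on $\|Y - \mu\|^2 = \sum_t (Y_t - \mu_t)^2$ using Poisson tail estimates, and then inverting the resulting bound asymptotically as $\tau \to 0$.

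The concentration step rescales coordinates as $Y_t = \tau N_t$ with $N_t$ independent $\mathrm{Pois}(\mu_t/\tau)$, and applies the two-sided Bennett inequality coordinatewise,
\[
\P(|Y_t - \mu_t| > s) \leq 2 \exp\!\left(-\frac{s^2}{2\tau\mu_t + 2\tau s/3}\right).
\]
To convert this into a bound on the Euclidean norm, I would use the pigeonhole observation $\{\|Y-\mu\|^2 > \varepsilon^2\} \subseteq \bigcup_t \{(Y_t - \mu_t)^2 > \varepsilon_t^2\}$, which holds whenever $\sum_t \varepsilon_t^2 = \varepsilon^2$, and allocate proportional budgets $\varepsilon_t^2 = \varepsilon^2 \mu_t/M_0$ with $M_0 = M/4 = \sum_t \mu_t$. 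This allocation makes the dominant Bennett exponent $\varepsilon_t^2/(2\tau \mu_t) = \varepsilon^2/(2\tau M_0)$ independent of $t$, so a union bound gives
\[
\P(\|Y-\mu\|^2 > \varepsilon^2) \leq 2n \exp\!\left(-\frac{\varepsilon^2}{2\tau M_0}\bigl(1 + o(1)\bigr)\right).
\]

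The inversion step solves $\P(\|Y-\mu\| > \varepsilon_\tau) = \varepsilon_\tau$ asymptotically: setting the right-hand side equal to $\varepsilon$ and taking logarithms yields $\varepsilon^2 \sim 2 \tau M_0 \log(1/\varepsilon)$, and iterating the logarithm gives $\log(1/\varepsilon_\tau) \sim \tfrac12 |\log(\tau M_0)|$. Combining these produces $\varepsilon_\tau^2 \sim -\tau M \log(\tau M)$, where the numerical factors of $2$ are absorbed into the constant $M = 4 M_0$. The assumption $\tau M < 1/e$ ensures $\log(\tau M) < 0$, so the rate is real; $w_\tau \leq 0$ follows because the infimum is attained from above, while $w_\tau = o(1)$ comes from the $\log\log$ residual in the iteration.

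The main obstacle is constant-tracking. Showing that the subdominant terms --- the prefactor $2n$ from the union bound (negligible since $n$ is fixed so $\log n$ is absorbed into $w_\tau$), the linear correction $2\tau s/3$ in Bennett's denominator (negligible since $s = \varepsilon_t \to 0$), and the iterated-$\log$ residual --- all collect into a single $w_\tau = o(1)$ with the correct sign requires careful bookkeeping. A cleaner but less transparent alternative would be a direct Chernoff bound on $\sum_t (Y_t - \mu_t)^2$ using the exact Poisson moment generating function, which avoids the pigeonhole step at the cost of a more intricate computation.
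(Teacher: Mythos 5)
Your route is genuinely different from the paper's. The paper does not argue coordinatewise: it bounds $\|Y-\mu\|\leq\|Y-\mu\|_1$, applies a single Chernoff--Cram\'er bound to $\sum_t|Y_t-\mu_t|$ using the exact Poisson cumulant function $c_t(x)=\mu_t[e^{x\tau}-1-x\tau]/\tau$, optimises at $x=\tau^{-1}\log(1+\varepsilon/M)$, and uses the elementary inequality $(1+u)\log(1+u)-u\geq -\tfrac{u^2}{2}(1-\tfrac{u}{3})$ to arrive at $\P(\|Y-\mu\|>\varepsilon)\leq\exp\{-\varepsilon^2/(4M_0'\tau)\}$ for $\varepsilon\leq 3M_0'/2$, with $M_0'=2\sum_t\mu_t$; the Ky Fan bound is then read off from the transcendental equation $e^{-z/A}=z$ (Lemma~\ref{lem:KyFanExp}). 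The factor $4$ in $M=4\sum_t\mu_t$ is thus an artefact of the $\ell_1$-versus-$\ell_2$ and two-sided-MGF slack, not of the iterated logarithm. Your Bernstein-plus-proportional-budget-plus-union-bound argument is valid as far as it goes (the pigeonhole inclusion and the coordinatewise tail bound are both correct) and is in fact sharper; the price is the prefactor $2n$, which is harmless for fixed $n$ but worth keeping in mind because this bound is reused with growing $n$ in Section~\ref{sec:pgrows}, where the paper's dimension-free product form is more convenient.

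The step that does not survive scrutiny is the claim that ``the numerical factors of $2$ are absorbed into the constant $M=4M_0$.'' They are not: your own computation gives $\varepsilon^2\sim 2\tau M_0\log(1/\varepsilon)$ and $\log(1/\varepsilon)\sim\tfrac12|\log(\tau M_0)|$, hence $\varepsilon_\tau^2\sim-\tau M_0\log(\tau M_0)$, which is asymptotically one quarter of $-\tau M\log(\tau M)$. As an upper bound this is fine --- it is stronger than the lemma's bound and yields $w_\tau\leq 0$ --- but it cannot deliver the clause $w_\tau=o(1)$, which asserts asymptotic equality at the normalisation $M=4\sum_t\mu_t$; your sharper bound in fact shows that equality at that normalisation fails (and the Gaussian limit of a single coordinate shows the exact constant is governed by $\max_t\mu_t$, smaller still). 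Neither your argument nor the paper's establishes a matching lower bound: the paper, too, only proves the upper-bound half, and only that half is used downstream (Proposition~\ref{th:KyFanData}). So present your argument as a proof of $\rho_{\rm K}(Y,\mu)\leq\sqrt{-\tau M\log(\tau M)}$ and state explicitly that the asymptotic-equality clause is not being proved, rather than concealing the factor-of-four mismatch in the bookkeeping.
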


Now, if we consider the exponential distribution with variance proportional to $\tau$, the order of the Ky Fan distance is different. Let $Y - \mu\sim \Exp(\lambda/\tau)$, then $\E Y = \mu + \tau/\lambda$, $\Var (Y) = \tau^2/\lambda^2$. As $\tau \to 0$, $Y \to \mu$ in probability. The Ky Fan distance is given by
$$
\rho_{\rm K}(Y, \mu) = -\frac{\tau}{\lambda}\log\left(\frac{\tau}{\lambda} \right) (1+w_{\tau}),
$$
where $w_{\tau} \leqslant 0$ and $w_{\tau} = o(1)$ as $\tau\to 0$. This follows from Lemma~\ref{lem:KyFanExp}.


Now we give some general statements on an upper bound on the Ky Fan distance for various distributions.

\begin{proposition}\label{th:KyFanData} Assume that $Y_t$ are independent, $\E Y_t =
\mu_t$ and $\Var(Y_t) = w_t \tau$.

\begin{enumerate}

\item Assume that $\exists  C_t\geqslant 1$ such that
    $\kappa_{t, k}$, the $k$th cumulant of $Y_t$, is bounded
    by $|\kappa_{t, k}| \leqslant C_t w_t \tau^{k-1}$ \,
    $\forall k \geq 2$ and $C_t$ and $ w_t$ are independent of $\tau$. Denote $M = 4\sum_t C_t w_t$.

    Then, for $ \tau\leqslant 1/(eM)$,
    $$
      \rho_{\rm K}(Y, \mu)\leqslant  \sqrt{-\tau M \log(\tau M)}.
    $$

In particular, this case applies to the distributions in the exponential family with dispersion $\tau$ defined by \eqref{eq:ExpFamilyDef}.


\item Assume that $\exists K \geqslant 2$: $\E |Y_t|^K
    <\infty$. Assume that $\E |Y_t -\mu_t|^K \leqslant
    \tau^{m(K)} L_K$ for some $L_K >0$ that may depend on
    $\mu_t$ or $w_t$ but not on $\tau$, for some $m(K)>0$.

Then, for small enough $\tau$,
$$
\rho_{\rm K} (Y,\mu) \leqslant  [n \tau^{m(K)/2 }
L_K]^{1/(K+1)}.
$$

\end{enumerate}
\end{proposition}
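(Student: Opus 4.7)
The plan is to reduce both parts to one-dimensional tail bounds on the coordinates $Y_t - \mu_t$ and then obtain the Ky Fan distance by solving the fixed-point inequality $\P(\|Y - \mu\| > \eps) \le \eps$ on the Euclidean metric on $\mathbb{R}^n$.

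For part (i), the hypothesis $|\kappa_{t,k}| \le C_t w_t \tau^{k-1}$ controls the cumulant generating function term-by-term: for any real $s$,
$$\log \E\, e^{s(Y_t - \mu_t)} \;=\; \sum_{k\ge 2} \kappa_{t,k}\, \frac{s^k}{k!} \;\le\; \frac{C_t w_t}{\tau}\,\bigl(e^{\tau|s|} - 1 - \tau|s|\bigr),$$
where the bound is valid for both signs of $s$ by majorising termwise with $|\kappa_{t,k}|$ in the absolutely convergent series. Standard Chernoff optimisation then yields a Bennett-type tail bound
$$\P(|Y_t - \mu_t| > a) \;\le\; 2\exp\!\left(-\frac{C_t w_t}{\tau}\,\phi\!\Bigl(\frac{a}{C_t w_t}\Bigr)\right), \qquad \phi(x) = (1+x)\log(1+x) - x,$$
which in the small-deviation regime reduces to the sub-Gaussian tail $\exp(-a^2/(2\tau C_t w_t))$. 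I then pass to the Euclidean norm via $\|Y-\mu\| \le \sqrt{n}\,\max_t |Y_t - \mu_t|$ and a union bound, absorbing the resulting combinatorial constants into the factor $4$ in $M = 4\sum_t C_t w_t$. Setting $a = \eps/\sqrt{n}$ and $\eps = \sqrt{-\tau M \log(\tau M)}$, direct substitution verifies that this $\eps$ satisfies $\P(\|Y-\mu\|>\eps) \le \eps$ under the threshold $\tau M \le 1/e$. The exponential-family case follows by direct computation: the cumulant generating function associated with \eqref{eq:ExpFamilyDef} is $[c(\eta + \tau u) - c(\eta) - \tau u\,c'(\eta)/b'(\eta)]/\tau$, whose $k$-th derivative at $u=0$ is of the required order $\tau^{k-1}$.

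For part (ii), Markov's inequality gives $\P(\|Y - \mu\| > \eps) \le \E\|Y - \mu\|^K/\eps^K$. Using convexity of $z \mapsto z^{K/2}$ for $K \ge 2$ (Jensen),
$$\E \|Y - \mu\|^K \;=\; \E \Bigl(\sum_t (Y_t - \mu_t)^2\Bigr)^{K/2} \;\le\; n^{K/2 - 1}\, \sum_t \E |Y_t - \mu_t|^K \;\le\; n^{K/2}\, \tau^{m(K)} L_K.$$
Solving $\E\|Y-\mu\|^K/\eps^K = \eps$ yields the upper bound $\eps \le (n^{K/2}\tau^{m(K)} L_K)^{1/(K+1)}$, which in the small-$\tau$ regime (where $\tau^{m(K)/2} \ge n^{(K-2)/(2)} \cdot \tau^{m(K)}$ fails, i.e.\ $\tau$ small enough relative to $n$) is majorised by the stated form $[n\,\tau^{m(K)/2}\, L_K]^{1/(K+1)}$.

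The main technical obstacle is in part (i): one must justify the sign-agnostic bound on the log-MGF rigorously and then carry the Bennett minimisation through precisely enough to recover the logarithmic factor $\sqrt{-\tau M \log(\tau M)}$ with the stated constant $M$, rather than the much weaker Chebyshev rate $\tau^{1/3}$ that a second-moment approach would give. The coordinate-wise union bound, the exponential-family verification, and the Markov step in part (ii) are comparatively routine.
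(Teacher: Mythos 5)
Your part (ii) is essentially sound: Markov's inequality plus the Jensen bound $\E\bigl(\sum_t(Y_t-\mu_t)^2\bigr)^{K/2}\le n^{K/2-1}\sum_t\E|Y_t-\mu_t|^K$ gives $\varepsilon=(n^{K/2}\tau^{m(K)}L_K)^{1/(K+1)}$, and the passage to the stated form $[n\tau^{m(K)/2}L_K]^{1/(K+1)}$ indeed costs only the ``small enough $\tau$'' hypothesis, exactly as the proposition allows. (The paper itself routes this step through $\|Y-\mu\|_2\le\|Y-\mu\|_K$, so your version is, if anything, the more careful one.)

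Part (i), however, has a genuine gap at the step where you pass from coordinatewise tails to the Euclidean norm. Writing $\|Y-\mu\|\le\sqrt{n}\max_t|Y_t-\mu_t|$ and applying a union bound at level $a=\varepsilon/\sqrt{n}$ per coordinate gives, in the sub-Gaussian regime, $\P(\|Y-\mu\|>\varepsilon)\le\sum_t 2\exp\bigl(-\varepsilon^2/(2n\tau C_tw_t)\bigr)$, which is governed by $n\max_t C_tw_t$ in the exponent, not by $\sum_t C_tw_t$. Since $n\max_t C_tw_t$ can exceed $\sum_t C_tw_t$ by a factor of order $n$ (take one coordinate with $C_tw_t=1$ and the rest with $C_tw_t=1/n$), the discrepancy is not a universal constant and cannot be absorbed into the factor $4$ in $M=4\sum_t C_tw_t$; on top of that the union-bound prefactor $2n$ injects an extra additive $\log n$ into $\varepsilon^2$ that the threshold $\tau M\le 1/e$ does not control. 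The paper's proof avoids both problems by bounding $\|Y-\mu\|_2\le\|Y-\mu\|_1$ and running a \emph{single} Chernoff bound on the $\ell_1$ norm: by independence $\E e^{x\|Y-\mu\|_1}=\prod_t\E e^{x|Y_t-\mu_t|}\le\prod_t\bigl(\E e^{x(Y_t-\mu_t)}+\E e^{-x(Y_t-\mu_t)}\bigr)$, so the cumulant bounds aggregate additively and the Bennett exponent comes out as $\frac{1}{\tau}\bigl(\sum_t C_tw_t\bigr)[e^{x\tau}-1-x\tau]$ directly; optimising in $x$, restricting to $\varepsilon\le 3M/2$, and invoking Lemma~\ref{lem:KyFanExp} then yields the stated radius with the stated $M$. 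To repair your argument you would either need to adopt this $\ell_1$ factorisation, or at minimum split $\varepsilon$ across coordinates proportionally to $\sqrt{C_tw_t/\sum_s C_sw_s}$ rather than equally — and even then the $2n$ prefactor forces a stronger smallness condition on $\tau$ than $\tau M\le 1/e$. Your termwise majorisation of the cumulant generating function and the exponential-family verification are fine.
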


Here is an example for the second case.
\begin{example} Suppose $Y_t$ has a $t$ distribution with $\nu$ degrees
of freedom, means $\mu_t$ and scales  $\sqrt{\tau} w_t$, $t=1,\ldots,n$. Then we can
take $K= \nu - 2 - \delta$ for some small $\delta>0$. Then, using the second statement of Proposition~\ref{th:KyFanData},
$$
 \E |Y_t - \mu_t|^K = [\sqrt{\tau} w_t]^K \nu_K,
$$
where $\nu_K$ is the $K$th moment of the standard $t_{\nu}$ distribution, i.e. $m(K)=K/2$ and $L_K = w_t^K
\nu_K$. Hence,
$$
\rho_{\rm K} (Y, \mu)\leqslant \tau^{1/2 - 1/(2(K+1))} [n
w_t^K \nu_K]^{1/(K+1)}.
$$

Note that this bound holds if $Y_t$ can be written as $Y_t = \mu_t
+ \sigma w_t Z_t$ where $Z_t$ are iid and whose
distribution is independent of $\tau$.

\end{example}

Applying Proposition~\ref{th:KyFanData} to the Gaussian distribution, we have the following lemma.
\begin{lemma}\label{lem:KyFanNormal}
 Let $\xi \sim \N_p(\mu,\Sigma)$.
Then, for $\Sigma$ such that $\trace(\Sigma) < 1/(4e)$,
\begin{equation}
\rho_{\rm K}(\xi, \mu) \leq \sqrt{- 4\trace(\Sigma) \log\{ 4\trace(\Sigma)\}}.
\end{equation}
\end{lemma}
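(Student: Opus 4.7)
The plan is to reduce the multivariate Gaussian to the independent-coordinate case via orthogonal diagonalisation of $\Sigma$, and then to invoke Proposition~\ref{th:KyFanData}(i), whose hypothesis is automatically satisfied because the Gaussian has vanishing cumulants beyond the second.

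First, I would diagonalise $\Sigma = U \Lambda U^T$ with $U$ orthogonal and $\Lambda = \diag(\lambda_1,\ldots,\lambda_p)$ the eigenvalues of $\Sigma$, and set $\eta = U^T(\xi-\mu) \sim \N_p(0,\Lambda)$, so that the coordinates $\eta_i \sim \N(0,\lambda_i)$ are independent. Because $U$ is orthogonal, $\|\xi - \mu\| = \|\eta\|$ pointwise in $\omega$, hence $\P(\|\xi-\mu\| > \varepsilon) = \P(\|\eta\| > \varepsilon)$ for every $\varepsilon > 0$, and consequently $\rho_{\rm K}(\xi,\mu) = \rho_{\rm K}(\eta, 0)$ with respect to the Euclidean metric used throughout Section~\ref{sec:metrics}.

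Next, I would apply Proposition~\ref{th:KyFanData}(i) to the independent vector $\eta$, treating the dispersion parameter as $\tau = 1$. The centred Gaussian $\N(0,\lambda_i)$ has second cumulant $\kappa_{i,2} = \lambda_i$ and all higher cumulants equal to zero, so the hypothesis $|\kappa_{i,k}| \leqslant C_i w_i \tau^{k-1}$ for $k \geq 2$ holds trivially with $C_i = 1$ and $w_i = \lambda_i$, both independent of $\tau$. Then $M = 4\sum_i w_i = 4\,\trace(\Sigma)$, the standing condition $\tau \leqslant 1/(eM)$ becomes exactly $4\,\trace(\Sigma) \leqslant 1/e$, i.e.\ the stated hypothesis $\trace(\Sigma) < 1/(4e)$, and the conclusion
$$
\rho_{\rm K}(\eta, 0) \leqslant \sqrt{-\tau M \log(\tau M)} = \sqrt{-4\,\trace(\Sigma) \log\{4\,\trace(\Sigma)\}}
$$
transfers to $\xi$ via the first step.

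There is essentially no obstacle: once one recognises that Proposition~\ref{th:KyFanData}(i) applies with trivial cumulant bounds in the Gaussian case, the lemma is simply the specialisation $\tau=1$, $w_i = \lambda_i$. The only point to be careful about is the reduction to independent coordinates — that the Ky Fan distance is invariant under orthogonal transformation of the ambient $\mathbb{R}^p$, which follows immediately from rotational invariance of the Euclidean norm used in the definition of $\rho_{\rm K}$.
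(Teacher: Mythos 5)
Your proof is correct and follows essentially the same route as the paper's: orthogonal diagonalisation of $\Sigma$ to reduce to independent Gaussian coordinates (using invariance of the Euclidean norm, hence of $\rho_{\rm K}$, under rotation), followed by an application of Proposition~\ref{th:KyFanData}(i) with the trivial cumulant bounds $C_i=1$ and all cumulants of order $\geq 3$ vanishing. The only cosmetic difference is that the paper keeps the dispersion explicit by writing $\Lambda = \tau\,\diag(\sigma_i^2)$ with $w_i=\sigma_i^2$, whereas you set $\tau=1$ and $w_i=\lambda_i$; the resulting quantity $\tau M = 4\,\trace(\Sigma)$ and the final bound are identical.
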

This bound is more precise than the one given in Lemma 7 from  \textcite{HofingerP:07}, and is more appropriate to the case of growing dimension $p$ that we study in Section~\ref{sec:pgrows}. It is easy to see that the log factor is unavoidable, even in a simple case $n=1$ and $\Sigma = \sigma^2$. Assume that $\rho_{\rm K}(\xi, \mu) = \sqrt{\tau} M$ for some $M>0$, then $M$ satisfies 
$$
\P(|\xi - \mu| >\sqrt{\tau} M) = 2(1 - \Phi(M)) = \sqrt{\tau} M
$$
since the distribution is continuous. If $\tau$ is small, we have 
$$
\sqrt{\tau} = 2 M^{-1} (1 - \Phi(M))  \leq \sqrt{2/\pi} M^{-2} e^{-M^2/2}
$$
and hence we must have $M\to \infty$ and, moreover, $M^2  = -\log (\tau) (1+o(1)) $ as $\tau \to 0$. Therefore, the upper bound is asymptotically of the correct order.

Note that the Ky Fan distance for the distributions from the exponential family has the same asymptotic order as for the Gaussian distribution with $\Sigma = \tau \Sigma_0$, $\Sigma_0$ is independent of $\tau$, as $\tau \to 0$.

\section{Rates of convergence of posterior distribution in Ky
Fan metric}\label{sec:ConvergenceKyFan}


Denote by $\mu_{\rm post} (\omega)$ the
posterior distribution of $X$ given $y=Y(\omega)$. We consider the
metric space $(\X, \ell_2)$ equipped with the Euclidean metric
$||x-z|| = \sqrt{\sum_{i=1}^p (x_i-z_i)^2}$, $\X \subset
\mathbb{R}^p$. Then, the posterior measure $\mu_{\rm post} (\omega)$
can be viewed as a measure on the metric space $(\X, \ell_2)$.
The corresponding metric space for the observations is $(\Y, \ell_2)$,
$\Y \subset \mathbb{R}^n$ equipped with metric generated by
$\ell_2$ norm.

Throughout, we use $\nabla_i = \frac{\partial}{\partial x_i}$ as the differentiating operator, and $\nabla = (\nabla_1, \dots, \nabla_p)^T$ as the gradient. Similarly, $\nabla_{ij}$ and $\nabla_{ijk}$ are operators of the second and third derivatives, with $\nabla^2 = (\nabla_{ij})$ being the matrix of second derivatives. The matrix norm $||\cdot||$ used in the paper is the spectral norm. We will also use $a \asymp b$ for asymptotic equivalence, i.e. that there exist constants $0< c \leq C <\infty$ such that $ c a \leq b \leq Ca$.

In the next section we  evaluate the level of concentration of the posterior distribution $\mu_{\rm post}$ around   $x^\star$. We start with the concentration of the posterior distribution $\mu_{\rm post}(\omega)$ for a fixed $\omega$ (i.e. for a particular data set) in the Prokhorov metric,  and then, using the lifting theorem (Theorem \ref{lem:KyFanDiffBounds}), we use bounds thus obtained to derive a bound on the Ky Fan distance between the posterior distribution and the limit over all $\omega$. 

Throughout this section, we assume that $x^\star$ is an interior point of $\X$.

\subsection{Assumptions on the likelihood and the prior}\label{sec:QuadraticApprox}

In addition to the assumptions on the likelihood stated in Section~\ref{sec:Setup}, we make the assumptions that the posterior distribution is proper, the point of concentration of the posterior distribution $x^\star$ is unique,   the log likelihood and log prior density have bounded third order derivatives and that the derivatives of the log likelihood are continuous with respect to $y$.

{\bf Assumptions on prior distribution.}

We assume that the prior distribution is such that the posterior distribution is proper.
\begin{enumerate}
\item  $\exists \tau_0 >0$: \, $\forall \tau\leqslant \tau_0$, \quad  $\int_{\X} e^{-h_y(x)/\tau} dx < \infty$  for all $y\in
    \Y$.
\item $x^\star = \arg \min_{x\in \X \, Ax = Ax\true } g(x)$ is a unique solution of the minimisation problem.
\end{enumerate}
The first assumption is that the posterior distribution is proper. The second assumption is that the regularisation by the chosen prior leads to a single solution; it is satisfied if $g$ is a convex function and $\X$ is a convex set. 

Define $\Y_{\rm loc}$ as the following neighbourhood of $y\exact$ in $\Y$:
\begin{eqnarray}\label{def:Yloc}
\Y_{\rm loc} = \{y\in \Y: \quad ||y - y\exact|| \leq \rho_{\rm K}(Y, y\exact)\}
\end{eqnarray}
where  $\rho_{\rm K}(Y, y\exact)$ is the Ky Fan distance between $Y$ and $y\exact$.
By the definition of the Ky Fan distance, $\PP(Y \in \Y_{\rm loc}) \geq 1 - \rho_{\rm K}(Y, y\exact)$.  

{\bf Smoothness in $x$}.

There exist $\delta >0$ and positive definite matrices $C_{f} \in \mathbb{R}^{n\times n}$, $ C_g \in \mathbb{R}^{p\times p}$ that may depend on $\delta$ such that for all $x\in B(x^\star, \delta)$ for all $y\in \Y_{\rm loc}$  and all $1 \leqslant i,j \leqslant p$,
\begin{eqnarray}\label{eq:BoundedDerG}
|v^T (\nabla^2 \tilde{f}_y(Ax) - \nabla^2 \tilde{f}_y(A x^\star))v | &\leqslant& \delta ||A|| v^T C_{f} v \quad \forall v\in \mathbb{R}^n, \\
| v^T (\nabla^2 g(x) - \nabla^2 g(x^\star)) v| &\leqslant& v^T C_{g} v\delta \, \quad \forall v\in \mathbb{R}^p,
\end{eqnarray}
where $\Y_{\rm loc}$ is defined by \eqref{def:Yloc}. 

This assumption holds if the third derivatives of these functions are uniformly bounded for $x\in B(x^\star, \delta)$ and $y\in \Y_{\rm loc}$, i.e. if $|\nabla_{ijk} \tilde{f}_y(Ax)|\leq C_{3,f}$ and $|\nabla_{ijk} g(x)|\leq C_{3,g}$ for all $i,j,k$. However, this assumption is not sufficient to obtain optimal rates of convergence in the case of growing dimensions $n$ and $p$ with eigenvalues of $A^T A$ decreasing to 0. See Section~\ref{sec:ExpFamily} for verification of this assumption for independent $Y_1,\ldots, Y_n$.
 


{\bf Convergence in $Y$.}


There exist positive definite matrices $M_{f,\,1}, M_{f,\,2} \in \mathbb{R}^{n\times n}$ such that for all $y\in \Y_{\rm loc}$,
\begin{eqnarray}\label{eq:BoundedDerDiffF2}
|| V (\nabla \tilde{f}_y(Ax^\star) - \nabla  \tilde{f}_{y\exact}(Ax^\star))||  &\leqslant&   || V M_{f,\, 1}(y - y\exact)||, \\
|v^T (\nabla^2 \tilde{f}_y(Ax^\star) - \nabla^2 \tilde{f}_{y\exact}(Ax^\star))v| &\leqslant& v^T M_{f,\, 2}^{1/2} \diag(|y_j - {y\exact}_j|) M_{f,\, 2}^{1/2}v\, \,\notag
\end{eqnarray}
for all $v\in \mathbb{R}^n$, $V \in \mathbb{R}^{p\times n}$. 
Here $\diag(|y_j - {y\exact}_j|) = \diag(|y_1 - {y\exact}_1|,\ldots, |y_n - {y\exact}_n|)$. 

Similarly to the assumption of smoothness in $x$, this assumption holds if 
$$|\nabla_{i} \tilde{f}_y(Ax^\star) - \nabla_{i} \tilde{f}_{y\exact}(Ax^\star)|\leq \tilde{M}_{f,1} ||y-y\exact||,$$ 
$$|\nabla_{ij} \tilde{f}_y(Ax^\star) - \nabla_{ij} \tilde{f}_{y\exact}(Ax^\star)|\leq \tilde{M}_{f,2} ||y-y\exact||$$ for $y \in \Y_{\rm loc}$ and all $i,j$ but again, this assumption is not sufficient to recover the optimal rates of convergence if $n,p$ increase and $A$ is ill-posed.  See Section~\ref{sec:ExpFamily} for verification of this assumption for exponential family.

Note that  $\nabla \tilde{f}_{y\exact}(Ax^\star)=0$ and  $\nabla^2 \tilde{f}_{y\exact}(Ax^\star) $ is positive definite since $Ax^\star$ minimises $\tilde{f}_{y\exact}$, if $x^\star$ is an interior point of $\X$.

The assumptions on smoothness in $x$ and convergence in $y$ are generalisations of the necessary conditions that are satisfied for the exponential family (see Section~\ref{sec:ExpFamily}).



{\bf Assumptions on $\delta$}.

Assume that $\delta >0$ satisfies the following conditions  as $\tau \to 0$:
\begin{enumerate}
\item
\begin{eqnarray}
\delta \to 0,  \quad \frac{\delta }{\sqrt{\tau}} \to \infty, \quad \delta \gg \rho_{\rm K}(Y, y\exact) +\nu, \notag \\
   \frac{ \delta\, [\rho_{\rm K}(Y, y\exact) +\nu]^{2}}{\tau} \to 0, \, \label{eq:CondDeltaLocal} \\
 \frac{  \delta}{\gamma} \to \infty \quad \text{(not necessary if $A^T A$ is of full rank)} \notag .
\end{eqnarray}

\item  For $ y \in \Y_{\rm loc}$,
\begin{eqnarray}\label{eq:LareDevIntegral}
\Delta_0(B(0,\delta))  \to 0 \quad \text{as} \quad \tau \to 0,
\end{eqnarray}
 where
\begin{eqnarray}\label{eq:DefD0}
\Delta_0(B(0,\delta)) &=& \frac{\int_{\X \setminus B(x^\star, \delta)} e^{-[h_y(x) - h_y(x^\star)]/\tau} dx }{\int_{ B(x^\star, \delta)} e^{-[h_y(x) - h_y(x^\star)]/\tau} dx}.
\end{eqnarray}

\end{enumerate}
After the approximation to $e^{-[h_y(x) - h_y(x^\star)]/\tau}$ on $B(x^\star, \delta)$ is derived, condition (\ref{eq:LareDevIntegral}) will be stated in a simplified form in Lemma~\ref{lem:LocalConst}. Throughout this section we use the error  $\Delta_0 = \Delta_0(B(0,\delta))$ defined by (\ref{eq:DefD0}).

\subsection{Rates of convergence} 

The limiting behaviour of the posterior distribution is characterised by
 the matrices of second derivatives:
\begin{eqnarray*}
V_y(x) &=&  \nabla^2 \tilde{f}_y(Ax),\quad \quad 
B(x) = \nabla^2 g(x),\\
H_y(x) &=& \nabla^2 h_y(x) = A^T V_y(x) A + \nu B(x).
\end{eqnarray*}

Define 
$\lambda_{\min, \,  P}(M) = \min_{||v||=1, \, Pv=v} ||M v||$ to be the smallest eigenvalue of a matrix $M$ on the range of a projection matrix $P$.

For a fixed $\omega$, we have the following upper bound on the Prokhorov distance between the posterior distribution and its limit.

\begin{theorem}\label{th:ProkhUpperInt} Suppose we have a Bayesian
model given in Section \ref{sec:SectionSetup}, and let the assumptions  stated
in Section~\ref{sec:QuadraticApprox} hold.
Assume also that $x^\star$ is an interior point of $\X$, and that matrix 
$$
\bar{H}_{Y(\omega)}(x^\star) = A^T V_{Y(\omega)}(x^\star) A +\nu
B(x^\star) - \delta [A^T C_{f} A \, ||A|| + \nu  C_{g}],
$$
 is of full rank.

Then, $\exists \tau_0  >0$ such that for $\forall \,
\tau \in (0, \tau_0]$,
\begin{multline}
\rho_{\rm P}(\mu_{\rm post}(\omega), \delta_{x^\star}) \leqslant \max\left\{  \frac{\Delta_0}{1+\Delta_0}, \quad
  ||[\bar{H}_{Y(\omega)}(x^\star)]^{-1} A^T  M_{f1} \, (Y(\omega)-y\exact)||\right.\\ +  \nu   ||[\bar{H}_{Y(\omega)}(x^\star)]^{-1}   \nabla g(x^\star) ||  \\
+ \left. \sqrt{- 4\tau \lambda(\omega) \log\left(  4\tau \lambda(\omega) \right) }(1+\Delta_{\star}(\delta, Y(\omega)))\right\},
\end{multline}
where $\lambda(\omega) = \trace\left([\bar{H}_{Y(\omega)}(x^\star)]^{-1}\right)$, $\Delta_0$ is defined by (\ref{eq:DefD0}) and  $\Delta_{\star}$ is defined by (\ref{eq:DefDeltaStar1}).
\end{theorem}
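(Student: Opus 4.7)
The plan is to establish the bound by decomposing the posterior mass into the contribution from a small ball $B(x^\star,\delta)$ around $x^\star$ and the complement. Outside the ball, the relative mass is controlled by $\Delta_0$, which directly yields the first term in the maximum: if $\mu_{\rm post}(\X\setminus B(x^\star,\delta))\le \Delta_0/(1+\Delta_0)$, then for any Borel set $B$ one has $\mu_{\rm post}(B) \le \delta_{x^\star}(B^\varepsilon)+\Delta_0/(1+\Delta_0)$ whenever $\varepsilon$ also bounds the concentration of the \emph{inside} part, which reduces the problem to analysing $h_y(x)$ on $B(x^\star,\delta)$.

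On $B(x^\star,\delta)$, I would perform a second order Taylor expansion of $h_y(x) = \tilde{f}_y(Ax) + \nu g(x)$ about $x^\star$. Using the smoothness-in-$x$ assumption \eqref{eq:BoundedDerG}, the quadratic form $\tfrac{1}{2}(x-x^\star)^T \nabla^2 h_y(x^\star)(x-x^\star)$ is bracketed by the floor Hessian $\bar H_{Y(\omega)}(x^\star) = A^T V_{Y(\omega)}(x^\star) A + \nu B(x^\star) - \delta[A^T C_f A\,\|A\| + \nu C_g]$ from below (and by a symmetric upper version), with remainder uniformly $o(\delta^3/\tau)$ on $B(x^\star,\delta)$. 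The linear term is $\nabla h_y(x^\star) = A^T \nabla \tilde{f}_y(Ax^\star) + \nu\,\nabla g(x^\star)$; invoking $\nabla\tilde f_{y\exact}(Ax^\star)=0$ and the convergence-in-$Y$ hypothesis \eqref{eq:BoundedDerDiffF2}, its image under $\bar H^{-1}$ is controlled by $\|\bar H^{-1} A^T M_{f1}(Y(\omega)-y\exact)\| + \nu\|\bar H^{-1}\nabla g(x^\star)\|$. Completing the square produces an effective mean $\hat x = x^\star - \bar H^{-1}\nabla h_y(x^\star)$, which accounts for the two ``bias'' terms in the stated bound.

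With this quadratic model in hand, the renormalised posterior on $B(x^\star,\delta)$ is compared to the Gaussian $\mathcal{N}(\hat x,\tau \bar H^{-1})$ restricted to the ball. Because $\delta/\sqrt{\tau}\to\infty$ and $\delta\|\bar H^{-1}\|^{-1}\gg$ third-order remainder, the ratio of the Gaussian restricted to $B(x^\star,\delta)$ to the full Gaussian differs from $1$ by a term I collect into $\Delta_\star(\delta,Y(\omega))$ as in \eqref{eq:DefDeltaStar1}. Applying Lemma~\ref{lem:KyFanNormal} with $\Sigma = \tau \bar H_{Y(\omega)}(x^\star)^{-1}$ yields the Prokhorov (here equal to Ky Fan, for fixed $\omega$) bound $\sqrt{-4\tau\lambda(\omega)\log(4\tau\lambda(\omega))}\,(1+\Delta_\star)$ between the Gaussian approximation and $\delta_{\hat x}$.

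Finally, I would assemble the three contributions by the triangle inequality for the Prokhorov metric applied to $\mu_{\rm post}(\omega)\to\mathcal{N}(\hat x,\tau\bar H^{-1})|_{B(x^\star,\delta)}\to \delta_{\hat x}\to \delta_{x^\star}$: the first step contributes the mass defect $\Delta_0/(1+\Delta_0)$ (plus the negligible density-ratio error absorbed in $\Delta_\star$), the second is the Gaussian bound, and the third is simply $\|\hat x-x^\star\|$, which splits into the two bias terms. Taking the maximum rather than the sum is justified because the outside-ball term enters only through a mass bound, whereas the bias and spread terms control the inside-ball Prokhorov radius, and one takes the larger of the two in the definition $\rho_{\rm P}$. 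The main obstacle is the bookkeeping: keeping the floor Hessian $\bar H$ consistent through Taylor, shift, and Gaussian steps while ensuring the third-order remainder is swallowed by the condition $\delta[\rho_{\rm K}(Y,y\exact)+\nu]^2/\tau\to 0$, and showing that the normalisation error $\Delta_\star$ can indeed be absorbed into a $(1+o(1))$ factor in the Gaussian term.
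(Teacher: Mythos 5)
Your proposal follows essentially the same route as the paper: the ball/complement decomposition with the tail controlled by $\Delta_0$, the second-order Taylor bracketing of $h_y$ by the floor/ceiling Hessians $\bar H$ and $\widetilde H$ (the paper's Lemma~\ref{lem:Approx} and Proposition~\ref{prop:LaplaceApproxAsympt}), the bias bound via $\nabla\tilde f_{y\exact}(Ax^\star)=0$ and \eqref{eq:BoundedDerDiffF2}, and the Gaussian Ky Fan bound of Lemma~\ref{lem:KyFanNormal} with $\Sigma=\tau\bar H^{-1}$. The only cosmetic difference is in the final assembly — you use the triangle inequality for the Prokhorov metric through the restricted Gaussian, whereas the paper invokes Strassen's theorem to pass to the Ky Fan distance of $\xi\sim\mu_{\rm post}(\omega)$ and then finds the smallest $\varepsilon$ satisfying $\P(\|\xi-x^\star\|>\varepsilon)\le\varepsilon$ via Lemma~\ref{lem:KyFanIneq}, which is how the maximum (rather than a sum) with the $\Delta_0/(1+\Delta_0)$ term arises.
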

The first term in the sum represents the bias of the posterior distribution, and the second term is the Prokhorov distance between $\N(0, \tau H_{Y(\omega)}(x^\star)^{-1})$ and the  point mass at zero. The maximum reflects the fact that there are two ``competing'' tails: Gaussian on the ball  $B(x^\star, \delta)$ and the tail of the posterior distribution outside the ball.

Since $x^\star$ is an interior point of $\X$, under the assumptions of the theorem, then $y\exact$ is an interior point of $G^{-1}(A \X)$ and hence $V_{y\exact}(x^\star)$ is positive definite as the Hessian of the optimisation problem.

This theorem implies that to have convergence of
the posterior distribution to $\delta_{x^\star}$, we must have (a) convergence of the data so that
$||Y- y\exact|| \stackrel{\P_{x\true}}{\to} 0$, (b) $\nu = \tau/\gamma^2\to 0$, i.e. the prior distribution needs to be rescaled
in a way dependent on the scale of the likelihood, and (c) $\tau \trace([\bar{H}_{Y(\omega)}(x^\star)]^{-1})\to 0$. If the matrix $A^T V_{Y(\omega)}(x^\star) A$ is of full rank, then, for small $\tau$, the trace is close to the constant  $\trace[(A^T V_{y\exact}(x^\star) A)^{-1}]$ with high probability, hence the latter condition is satisfied as $\tau \to 0$. However, if $A^T V_{Y(\omega)}(x^\star) A$ is not of full rank, then, for small enough $\nu$ and $\tau$, $\trace([\bar{H}_{Y(\omega)}(x^\star)]^{-1}) \asymp \nu^{-1}$; hence, we must have $\tau/\nu =\gamma^2 \to 0$.

This is summarised in the following corollary.
\begin{corollary}
For weak convergence of the posterior distribution to the point mass at $x^\star$ as $\tau \to 0$ for a fixed $\omega$, we must have  $\nu = \tau/\gamma^2\to 0$.

1. If the matrix $A^T V_{Y(\omega)}(x^\star) A$ is not of full rank, then
we must also have $\gamma \to 0$.

2. If the matrix $A^T V_{Y(\omega)}(x^\star) A$ is of full rank, however, the scale of the prior distribution $\gamma$ may be taken a positive constant.

\end{corollary}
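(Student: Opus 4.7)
The plan is to deduce the corollary by requiring each competing term in the Prokhorov upper bound of Theorem~\ref{th:ProkhUpperInt} to vanish as $\tau\to 0$, and then translating those requirements into constraints on $\nu$ and $\gamma$ through the spectrum of $\bar{H}_{Y(\omega)}(x^\star)$.

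First I would isolate the informative terms. The tail $\Delta_0/(1+\Delta_0)$ vanishes by assumption~\eqref{eq:LareDevIntegral}, and the random-bias term $||[\bar{H}_{Y(\omega)}(x^\star)]^{-1} A^T M_{f,\,1}(Y-y\exact)||$ vanishes because $Y\to y\exact$ in probability by assumption (i) of Section~\ref{sec:Setup} (and because $A^T M_{f,\,1}(Y-y\exact)$ lies in $(\ker A)^{\perp}$, where $\bar{H}_{Y(\omega)}(x^\star)$ is uniformly invertible). This leaves the prior bias $\nu ||[\bar{H}_{Y(\omega)}(x^\star)]^{-1}\nabla g(x^\star)||$ and the Gaussian-concentration piece $\sqrt{-4\tau\lambda(\omega)\log(4\tau\lambda(\omega))}$ with $\lambda(\omega)=\trace([\bar{H}_{Y(\omega)}(x^\star)]^{-1})$. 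Since $x^\star$ only minimises $g$ on the affine slice $\{x:Ax=Ax\true\}$, generically $\nabla g(x^\star)\neq 0$, and $||[\bar{H}_{Y(\omega)}(x^\star)]^{-1}||$ is bounded below in operator norm for small $\delta$, so the prior-bias term is of exact order $\nu$; its vanishing forces $\nu\to 0$ in either case.

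Next I would analyse $\lambda(\omega)$ case by case. In Case~1, $A^T V_{Y(\omega)}(x^\star) A$ has smallest eigenvalue $\lambda_0>0$, so for $\delta$ small the perturbation $-\delta[A^T C_f A\,||A||+\nu C_g]$ shifts eigenvalues by at most $O(\delta)$, yielding $\bar{H}_{Y(\omega)}(x^\star)\succeq (\lambda_0/2)I$ and $\lambda(\omega)\leq 2p/\lambda_0$ uniformly. Then $\tau\lambda(\omega)\to 0$ is automatic and $\gamma$ may be any positive constant. In Case~2, note that $\ker(A^T V_{Y(\omega)}(x^\star) A)=\ker A$ because $V_{Y(\omega)}(x^\star)$ is positive definite in a neighbourhood of $y\exact$. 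Restricted to $\ker A$, the contribution $A^T V_{Y(\omega)}(x^\star) A$ vanishes and $\bar{H}_{Y(\omega)}(x^\star)$ reduces to $\nu [B(x^\star)-\delta C_g]$, whose eigenvalues are $\Theta(\nu)$. A Courant--Fischer min-max argument then provides $\dim(\ker A)$ eigenvalues of $\bar{H}_{Y(\omega)}(x^\star)$ of size $O(\nu)$, so $\lambda(\omega)\geq c_2/\nu$ and $\tau\lambda(\omega)\geq c_2\,\tau/\nu=c_2\gamma^2$. Vanishing of the Gaussian term therefore demands $\gamma\to 0$.

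The main delicacy is verifying in Case~2 that $B(x^\star)$ is strictly positive definite on $\ker A$, so that the restricted Rayleigh quotient really is $\Theta(\nu)$ rather than smaller. This follows from the uniqueness assumption (ii) in Section~\ref{sec:QuadraticApprox}: were $B(x^\star)$ degenerate along some direction in $\ker A$, one could move from $x^\star$ along that direction while staying in the constraint set $\{Ax=Ax\true\}$ without increasing $g$ at second order, contradicting the assumed uniqueness of $x^\star$ as the constrained minimiser of $g$.
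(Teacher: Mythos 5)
Your argument is essentially the one the paper itself gives: the corollary is read off directly from the upper bound in Theorem~\ref{th:ProkhUpperInt}, with the prior-bias term $\nu\,\|[\bar{H}_{Y(\omega)}(x^\star)]^{-1}\nabla g(x^\star)\|$ forcing $\nu\to 0$, and the behaviour of $\lambda(\omega)=\trace([\bar{H}_{Y(\omega)}(x^\star)]^{-1})$ --- bounded when $A^T V_{Y(\omega)}(x^\star)A$ is of full rank, of order $\nu^{-1}$ otherwise --- forcing $\gamma\to 0$ exactly in the rank-deficient case. Your Courant--Fischer analysis of the spectrum on $\ker A$ is a correct fleshing-out of the paper's one-line assertion that $\trace([\bar{H}_{Y(\omega)}(x^\star)]^{-1})\asymp \nu^{-1}$. (Note that you have interchanged the labels of Cases 1 and 2 relative to the statement: your ``Case 1'' is the full-rank case, which is item 2 of the corollary.)

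Two remarks on your final paragraph. First, for the implication you actually need --- $\trace([\bar{H}_{Y(\omega)}(x^\star)]^{-1})\gtrsim \nu^{-1}$, hence $\tau\lambda(\omega)\gtrsim \gamma^2$ --- only an \emph{upper} bound of order $\nu$ on the eigenvalues of $\bar{H}_{Y(\omega)}(x^\star)$ restricted to $\ker A$ is required, and this follows from $v^T\bar{H}_{Y(\omega)}(x^\star)v\leq \nu\, v^T B(x^\star)v$ for $v\in\ker A$ (up to the $\delta$-perturbation) with no positivity of $B(x^\star)$ on $\ker A$ needed; strict positive definiteness there is only relevant to the invertibility of $\bar{H}_{Y(\omega)}(x^\star)$, which is already an explicit hypothesis of Theorem~\ref{th:ProkhUpperInt}. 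Second, your proposed derivation of that positive definiteness from assumption (ii) of Section~\ref{sec:QuadraticApprox} does not go through: a unique constrained minimiser of $g$ can have a degenerate Hessian along a feasible direction (quartic growth, say), so second-order degeneracy of $B(x^\star)$ on $\ker A$ does not contradict uniqueness. Fortunately nothing in the corollary depends on this step.
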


The theorem also implies that the rate of contraction of the posterior distribution (in terms of the Prokhorov distance)  varies between $\PA \X$ and $(I-\PA)\X$ and is determined by the second derivative of the logarithm of the posterior density.

This  theorem gives an upper bound on the Prokhorov distance
between the posterior distribution and the limit for any
particular instance of observed data $Y(\omega)$.
To ``lift'' the
result obtained to a bound on the Ky Fan distance over all $\omega$, we use
the following generalisation of the lifting theorem of \textcite{HofingerP:07} to the case of different bounds for different outcomes $\omega$.

\begin{theorem}\label{lem:KyFanDiffBounds}
Let random variables $X_1$, $X_2$ and $Y_1$, $Y_2$ be defined on
the same probability   space $(\Omega, {\cal F}, \P)$ with values
in metric spaces $(X, d_x)$ and $(Y, d_y)$, respectively, and suppose the sample space $\Omega$ is partitioned into
two parts, $\Omega = \Omega_1 \cup \Omega_2$, $\Omega_1 \cap
\Omega_2 = \emptyset$.

Assume that there exist positive nondecreasing functions $\Phi_1$
and $\Phi_2$:
$$
\forall \omega \in\Omega_k, \quad  d_x (X_1(\omega), X_2(\omega) ) \leqslant
    \Phi_k(d_y (Y_1(\omega), Y_2(\omega))), \quad k=1,2
$$
i.e. we have  different upper bounds on $\Omega_1 $ and $\Omega_2$.

Then, the following inequalities hold:
\begin{eqnarray*}
\rho_{\rm K} (X_1, X_2) &\leqslant& \max\{\rho_{\rm K} (Y_1 , Y_2 ) + \pr(\Omega_2),  \Phi_1(\rho_{\rm K} (Y_1 , Y_2 )) \},\\
\rho_{\rm K} (X_1, X_2) &\leqslant& \max\{\rho_{\rm K} (Y_1 , Y_2 ),  \Phi_1(\rho_{\rm K} (Y_1 , Y_2 )), \Phi_2(\rho_{\rm K} (Y_1 , Y_2 )) \}.
\end{eqnarray*}
\end{theorem}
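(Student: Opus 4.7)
The strategy is a direct application of the definition of the Ky Fan metric combined with the partition $\Omega = \Omega_1 \cup \Omega_2$. Writing $\eta := \rho_{\rm K}(Y_1, Y_2)$, by definition for every $\varepsilon > \eta$ one has $\P(d_y(Y_1,Y_2) > \varepsilon) < \varepsilon$. Both inequalities will be obtained by exhibiting, for each such $\varepsilon$, a candidate bound $M(\varepsilon)$ satisfying $\P(d_x(X_1,X_2) > M(\varepsilon)) < M(\varepsilon)$, and then letting $\varepsilon \downarrow \eta$.

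For the first inequality, set $M(\varepsilon) := \max\{\varepsilon + \P(\Omega_2),\, \Phi_1(\varepsilon)\}$. On $\Omega_1$, the hypothesis together with monotonicity of $\Phi_1$ gives the inclusion
\[
\{d_x(X_1,X_2) > \Phi_1(\varepsilon)\} \cap \Omega_1 \subseteq \{d_y(Y_1,Y_2) > \varepsilon\} \cap \Omega_1,
\]
whereas on $\Omega_2$ one uses only the trivial bound $\{d_x(X_1,X_2) > M(\varepsilon)\} \cap \Omega_2 \subseteq \Omega_2$. Summing the two contributions,
\[
\P(d_x(X_1,X_2) > M(\varepsilon)) \leq \P(d_y(Y_1,Y_2) > \varepsilon) + \P(\Omega_2) < \varepsilon + \P(\Omega_2) \leq M(\varepsilon),
\]
which is exactly the Ky Fan criterion at level $M(\varepsilon)$.

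For the second inequality, one replaces the crude bound on $\Omega_2$ by the symmetric one: on each $\Omega_k$ the hypothesis and monotonicity of $\Phi_k$ yield $\{d_x > \Phi_k(\varepsilon)\} \cap \Omega_k \subseteq \{d_y > \varepsilon\} \cap \Omega_k$ for $k=1,2$. Setting $\tilde M(\varepsilon) := \max\{\varepsilon,\, \Phi_1(\varepsilon),\, \Phi_2(\varepsilon)\}$, the two inclusions combine to give $\{d_x > \tilde M(\varepsilon)\} \subseteq \{d_y > \varepsilon\}$, so $\P(d_x > \tilde M(\varepsilon)) < \varepsilon \leq \tilde M(\varepsilon)$.

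The only subtle point is the passage $\varepsilon \downarrow \eta$: since $\Phi_k$ is assumed only nondecreasing, $\Phi_k(\varepsilon)$ converges to the right limit $\Phi_k(\eta+)$, which may strictly majorise $\Phi_k(\eta)$ at a jump. I would therefore either invoke right-continuity of $\Phi_k$ (as in the original lifting theorem of Hofinger and Pikkarainen) or read the stated bounds with $\Phi_k(\eta+)$ in place of $\Phi_k(\eta)$. Beyond this minor bookkeeping, the argument is pure pigeonhole on the partition and I do not foresee any genuine obstacle.
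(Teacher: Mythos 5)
Your argument is correct and is essentially the paper's own proof: both decompose over the partition $\Omega_1\cup\Omega_2$, use monotonicity of $\Phi_k$ to pull the event $\{d_x(X_1,X_2)>\Phi_k(\cdot)\}$ back to $\{d_y(Y_1,Y_2)>\cdot\}$ on each piece (with the crude bound $\P(\Omega_2)$ on $\Omega_2$ for the first inequality), and conclude via the defining criterion of the Ky Fan metric. The right-continuity worry you raise dissolves if you work directly at $\varepsilon=\rho_{\rm K}(Y_1,Y_2)$ with non-strict inequalities: continuity of measure gives $\P\bigl(d_y(Y_1,Y_2)>\rho_{\rm K}(Y_1,Y_2)\bigr)\le\rho_{\rm K}(Y_1,Y_2)$, and the paper's Lemma~\ref{lem:KyFanIneq} needs only $\P(d_x>\varepsilon_1)\le\varepsilon_2$ to conclude $\rho_{\rm K}(X_1,X_2)\le\max(\varepsilon_1,\varepsilon_2)$, which is exactly how the paper avoids taking a limit through $\Phi_k$.
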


In our case, $(X, d_x)$ is the space of all distributions equipped with the Prokhorov metric, and $(Y, d_y)$ is the metric space $\Y$ with the $\ell_2$ metric. Theorem~\ref{th:ProkhUpperInt}   provides an upper bound $\Phi_1$  on the event $\Omega_1$ where a random matrix $H_{Y(\omega)}(x^\star)$   is of full rank, and the first statement of the theorem is applied to obtain the Ky Fan rate of convergence.
Note that we do not need an upper bound $\Phi_2$  to bound the Ky Fan distance on $\Omega_2$, as long as $\pr(\Omega_2)$ is vanishingly small as $\tau \to 0$.



Denote
\begin{eqnarray}\label{eq:c1c2}
c_1 &=& ||H_{\nu}^{-1} A^T M_{f1}|| \quad \quad c_2 =  ||H_{\nu}^{-1} \nabla g(x^\star) ||,
\end{eqnarray}
 and, for small enough $\rho_{\rm K}(Y, y\exact)$ and $\delta$,
\begin{eqnarray}\label{eq:tildec1c2}
\bar{c}_k =  c_k \, \left[1 - \tilde\lambda \right]^{-1}, \quad k=1,2,
\end{eqnarray}
where $H_{\nu} = A^T V_{y\exact}(x^\star) A +\nu B(x^\star)$ and 
\begin{eqnarray}\label{eq:defLambda}
\tilde\lambda = \delta ||H_{\nu}^{-1} D || + \rho_{\rm K}(Y, y\exact) ||H_{\nu}^{-1} A^T M_{f2} A|| .
\end{eqnarray}

\begin{theorem}\label{th:KyFanPostInt}
Suppose we have the Bayesian model defined in Section
\ref{sec:SectionSetup}, and that the assumptions on $f_y$, $g$ and $\delta$ stated in Section~\ref{sec:QuadraticApprox} hold.

Assume that
\begin{enumerate}
\item  $x^\star$ is an interior point of $\X$,
\item $H_{\nu} = A^T V_{y\exact}(x^\star) A +\nu B(x^\star)$ is of full rank.
\end{enumerate}

Then, $\exists \tau_0 >0$ such that for $\forall \,
\tau \in (0, \tau_0]$, and small enough $\nu$ and $\tau/\nu$,
\begin{eqnarray}
\rho_{\rm K}(\mu_{\rm post}, \delta_{x^\star}) &\leqslant&
  \max\left\{2\rho_{\rm K}(Y, y\exact), \,\, \frac{\Delta_0}{1+ \Delta_0},\,\, \bar{c}_1 \rho_{\rm K}(Y, y\exact) + \bar{c}_2
 \nu \right. \\
&+& \left. \left[- 4\tau\trace(H_{\nu}^{-1}) \log\left( 4\tau \trace(H_{\nu}^{-1}) \right) \right]^{1/2} (1+\Delta_{\star, K}(\delta))\right\},  \notag
\end{eqnarray}
where $\bar{c}_1$ and $\bar{c}_2$ are defined by (\ref{eq:tildec1c2}), $\Delta_0 = \Delta_0(B(0,\delta))$ is given by (\ref{eq:D0expr}), and $\Delta_{\star, K}(\delta)$ is defined by (\ref{eq:DefDeltaStarK}).


Under the assumptions on $\tau$, $\nu$ and $\delta$, $\Delta_{\star, K}(\delta)=o(1)$  as $\tau \to 0$.
\end{theorem}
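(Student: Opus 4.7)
The plan is to combine the fixed-$\omega$ Prokhorov bound of Theorem~\ref{th:ProkhUpperInt} with the lifting Theorem~\ref{lem:KyFanDiffBounds}, after replacing the random Hessian $\bar{H}_{Y(\omega)}(x^\star)$ by its deterministic limit $H_{\nu}$. Partition the sample space as $\Omega = \Omega_1 \cup \Omega_2$ with $\Omega_1 = \{\omega : Y(\omega) \in \Y_{\rm loc}\}$; then $\pr(\Omega_2) \leq \rho_{\rm K}(Y, y\exact)$ by definition of $\Y_{\rm loc}$. On $\Omega_1$ the convergence-in-$y$ assumption \eqref{eq:BoundedDerDiffF2} together with the smoothness-in-$x$ assumption \eqref{eq:BoundedDerG} yields $\bar{H}_{Y(\omega)}(x^\star) = H_{\nu} - E(\omega)$ with $||H_{\nu}^{-1} E(\omega)|| \leq \tilde\lambda$, where $\tilde\lambda$ is given by \eqref{eq:defLambda} and is $o(1)$ under the hypotheses on $\delta$, $\nu$, and $\rho_{\rm K}(Y, y\exact)$. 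In particular $\bar{H}_{Y(\omega)}(x^\star)$ is of full rank on $\Omega_1$, so Theorem~\ref{th:ProkhUpperInt} applies pointwise there.

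A Neumann-series perturbation gives $||[\bar{H}_{Y(\omega)}(x^\star)]^{-1} v|| \leq (1 - \tilde\lambda)^{-1} ||H_{\nu}^{-1} v||$ for every $v$, so on $\Omega_1$ the random-bias term in Theorem~\ref{th:ProkhUpperInt} is bounded by $\bar{c}_1 ||Y(\omega) - y\exact||$, the prior-bias term by $\bar{c}_2 \nu$ (recall $\bar{c}_k$ defined in \eqref{eq:tildec1c2}), and the trace is controlled by $\trace([\bar{H}_{Y(\omega)}(x^\star)]^{-1}) \leq (1-\tilde\lambda)^{-1} \trace(H_{\nu}^{-1})$ using positive-definiteness of both matrices. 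Since $u \mapsto \sqrt{-u\log u}$ is increasing on $(0, e^{-1})$, this converts the term $\sqrt{-4\tau\lambda(\omega)\log(4\tau\lambda(\omega))}$ of Theorem~\ref{th:ProkhUpperInt} into $\sqrt{-4\tau\trace(H_\nu^{-1})\log(4\tau\trace(H_\nu^{-1}))}$ multiplied by a factor $1+o(1)$. Bundling this $1+o(1)$ together with the $(1+\Delta_{\star}(\delta,Y(\omega)))$ factor of Theorem~\ref{th:ProkhUpperInt} into the single quantity $\Delta_{\star, K}(\delta)$ gives the nondecreasing envelope
\[
\Phi_1(t) = \max\Big\{\tfrac{\Delta_0}{1+\Delta_0},\;\bar{c}_1 t + \bar{c}_2 \nu + \sqrt{-4\tau\trace(H_{\nu}^{-1})\log(4\tau\trace(H_{\nu}^{-1}))}\,(1+\Delta_{\star, K}(\delta))\Big\},
\]
which is an upper bound for $\rho_{\rm P}(\mu_{\rm post}(\omega), \delta_{x^\star})$ valid for every $\omega \in \Omega_1$ as a function of $t = ||Y(\omega) - y\exact||$.

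Finally, apply the first inequality of Theorem~\ref{lem:KyFanDiffBounds} with $(X_1, X_2) = (\mu_{\rm post}, \delta_{x^\star})$, $(Y_1, Y_2) = (Y, y\exact)$, and this $\Phi_1$ — no bound on $\Omega_2$ is needed, since only $\pr(\Omega_2)$ enters. This yields
\[
\rho_{\rm K}(\mu_{\rm post}, \delta_{x^\star}) \leq \max\{\rho_{\rm K}(Y, y\exact) + \pr(\Omega_2),\; \Phi_1(\rho_{\rm K}(Y, y\exact))\},
\]
and since $\pr(\Omega_2) \leq \rho_{\rm K}(Y, y\exact)$, the first argument is bounded by $2\rho_{\rm K}(Y, y\exact)$, which recovers exactly the theorem's bound. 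The claim $\Delta_{\star, K}(\delta) = o(1)$ follows because the hypotheses \eqref{eq:CondDeltaLocal} force $\tilde\lambda \to 0$, so every perturbation factor of the form $(1-\tilde\lambda)^{-1} - 1$ or its trace analogue vanishes. The main obstacle is the matrix perturbation step uniform over $\Omega_1$ — in particular, propagating the $(1-\tilde\lambda)^{-1}$ bound from the operator norm to the trace (which needs positive definiteness of $\bar{H}_{Y(\omega)}(x^\star)$ and a trace inequality, not merely the operator-norm control) and then verifying that the resulting composite $\Phi_1$ is genuinely nondecreasing so that the hypothesis of Theorem~\ref{lem:KyFanDiffBounds} is met.
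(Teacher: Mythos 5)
Your proposal is correct and follows essentially the same route as the paper: partition on $\Y_{\rm loc}$, uniform perturbation of $\bar{H}_{Y(\omega)}(x^\star)$ around $H_{\nu}$ controlled by $\tilde\lambda$, monotonicity of $u\mapsto\sqrt{-u\log u}$, and the first inequality of the lifting theorem with $\pr(\Omega_2)\leq \rho_{\rm K}(Y,y\exact)$. The trace step you flag as the main obstacle is resolved in the paper exactly as you anticipate: the quadratic-form bounds give $\bar{H}\succeq(1-\tilde\lambda)H_{\nu}$, hence $\lambda_k(\bar{H}^{-1})\leq \lambda_k(H_{\nu}^{-1})/(1-\tilde\lambda)$ for every $k$ and therefore $\trace(\bar{H}^{-1})\leq \trace(H_{\nu}^{-1})/(1-\tilde\lambda)$.
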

In the upper bound we have the maximum of three terms, the rate of convergence of the data, the tail of the posterior distribution and the convergence rate of the local Gaussian approximation of the posterior distribution around the concentration point $x^\star$. In the latter, there are three terms: the first two represent the bias, the first one is due to random error and the second one is the bias caused by the chosen prior, and the third one is the equivalent of the variance term which is characterised by the smallest radius $\lambda_{\rm min}(H_{\nu})$ of the concentration ellipse of the Gaussian approximation.
Recall that in the ill-posed case (if $A^T V_{y\exact}(x^\star) A$ is not of full rank), $\trace(H_{\nu}^{-1}) \asymp \nu^{-1}$, and in the well-posed case $\trace(H_{\nu}^{-1})\asymp const$.

For a well-posed problem, $x^\star= x\true$ and the theorem provides us the rate of convergence of the posterior distribution to the point mass at $x\true$. However, for an ill-posed problem, $x^\star$ may differ from $x\true$ in the null space of $A$, so there is an additional bias term $|| (I-\PA)(x\true - x^\star)||$.

Thus, we have the following corollary.

\begin{corollary}\label{eq:cor:KyFanRate} Suppose that $\rho_{\rm K}(Y, y\exact) \leqslant C\sqrt{-\tau \log \tau}$ for some constant $C$, and that the assumptions of Theorem~\ref{th:KyFanPostInt} are satisfied, and that $\frac{\Delta_0}{1+ \Delta_0}$ is smaller than the other terms in the maximum.

If $A^T V_{y\exact}(x^\star) A$ is of full rank (well-posed problem), the smallest upper bound on the Ky Fan distance  is
$$
\rho_{\rm K}(\mu_{\rm post}, \delta_{x\true}) \leqslant C_1 \left( -\tau \log \tau \right)^{1/2},
$$
with $\gamma^2 \geq \tau^{1/2}[-\log \tau]^{-1/4}$.

If $A^T V_{y\exact}(x^\star) A$ is not of full rank (ill-posed problem), the smallest upper bound on the Ky Fan distance  is
$$
\rho_{\rm K}(\mu_{\rm post}, \delta_{x\true}) \leqslant || (I-\PA)(x\true - x^\star)|| + C_2 \left( -\tau \log \tau \right)^{1/3},
$$
 with $\gamma^2 =\tau^{2/3}[-\log \tau]^{-1/6}$.
\end{corollary}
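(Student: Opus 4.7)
The plan is to apply Theorem~\ref{th:KyFanPostInt} with the data-noise bound $\rho_{\rm K}(Y, y\exact)\leq C\sqrt{-\tau\log\tau}$ substituted in, and then optimise the resulting upper bound over the prior scale $\nu = \tau/\gamma^2$. Using the hypothesis that $\Delta_0/(1+\Delta_0)$ is dominated, the bound reduces to the maximum of $2\rho_{\rm K}(Y,y\exact) = O(\sqrt{-\tau\log\tau})$ and
\[
\bar c_1\rho_{\rm K}(Y, y\exact) + \bar c_2\nu + \bigl[-4\tau T_\nu \log(4\tau T_\nu)\bigr]^{1/2}(1+o(1)),\qquad T_\nu := \trace(H_\nu^{-1}).
\]
The remaining work is to determine the order of $T_\nu$ as $\nu \to 0$ and to choose $\nu$ so as to balance the three contributions.

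The rank of $A^T V_{y\exact}(x^\star) A$ dictates the behaviour of $T_\nu$. In the full-rank case, $H_\nu \to A^T V_{y\exact}(x^\star) A$ as $\nu\to 0$, so $T_\nu = O(1)$. Otherwise, splitting $\mathbb{R}^p$ orthogonally as $\mathrm{range}(A^T) \oplus \mathrm{null}(A)$, $H_\nu$ is uniformly invertible on $\mathrm{range}(A^T)$ while acting as $\nu B(x^\star)$ on $\mathrm{null}(A)$, which gives $T_\nu \asymp \nu^{-1}$.

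In the well-posed case, $A$ having full column rank forces $x^\star = x\true$ (no null-space ambiguity) and the variance term is $O(\sqrt{-\tau\log\tau})$ irrespective of $\nu$; it then suffices to make the prior bias $\nu = \tau/\gamma^2$ of at most that order, which the stated lower bound on $\gamma^2$ comfortably achieves, giving the first conclusion. In the ill-posed case, with $T_\nu \asymp \nu^{-1}$ the variance term becomes of order $\sqrt{\tau|\log(\tau/\nu)|/\nu}$; setting it equal to $\bar c_2 \nu$ yields $\nu^3 \asymp \tau|\log(\tau/\nu)|$, solved to leading order by $\nu \asymp (\tau|\log\tau|)^{1/3}$, which matches the stated scaling $\gamma^2 = \tau/\nu$. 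The common optimal value $(-\tau\log\tau)^{1/3}$ is the claimed rate. The non-vanishing offset $||(I-\PA)(x\true - x^\star)||$ arises because Theorem~\ref{th:KyFanPostInt} gives contraction to $\delta_{x^\star}$, while $Ax^\star = Ax\true$ places $x\true - x^\star$ in $\mathrm{null}(A)$; a triangle-inequality step in the Ky Fan metric produces this extra term.

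The main technical obstacle is ensuring $\bar c_1$ and $\bar c_2$ stay bounded as $\nu \to 0$ in the ill-posed case, because $H_\nu^{-1}$ blows up on $\mathrm{null}(A)$. For $\bar c_2 \propto ||H_\nu^{-1}\nabla g(x^\star)||$, the saving observation is that $x^\star$ minimises $g$ subject to $Ax = Ax\true$, so by the Lagrange/KKT condition $\nabla g(x^\star) \in \mathrm{range}(A^T)$, a subspace on which $H_\nu^{-1}$ is uniformly bounded; for $\bar c_1 \propto ||H_\nu^{-1} A^T M_{f,1}||$ the explicit $A^T$ factor plays the same role. One must also verify that the scaling constraints on $\delta$ in Section~\ref{sec:QuadraticApprox} are simultaneously satisfiable at the optimised $\nu$, which is a routine intermediate calculation.
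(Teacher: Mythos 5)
Your proposal is correct and follows essentially the route the paper intends (the corollary is left as an immediate consequence of Theorem~\ref{th:KyFanPostInt}): you use the trace asymptotics $\trace(H_{\nu}^{-1})=O(1)$ in the well-posed case versus $\trace(H_{\nu}^{-1})\asymp\nu^{-1}$ in the ill-posed case exactly as stated in the discussion after that theorem, the KKT identity $(I-\PA)\nabla g(x^\star)=0$ (cf.\ Lemma~\ref{lem:UpperBoundXG_gen}) to keep $\bar{c}_1,\bar{c}_2$ bounded as $\nu\to0$, and a triangle inequality in the Prokhorov/Ky Fan metric to produce the offset $||(I-\PA)(x\true-x^\star)||$, then balance the prior-bias term $\bar{c}_2\nu$ against the variance term. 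One minor remark: your optimisation yields $\gamma^2\asymp\tau^{2/3}(-\log\tau)^{-1/3}$, which agrees with the choice $\gamma\asymp\tau^{1/3}[\log(1/\tau)]^{-1/6}$ made in Section~\ref{sec:ExpFamily}; the exponent $-1/6$ on the log factor in the corollary's displayed $\gamma^2$ appears to be the exponent appropriate for $\gamma$ rather than $\gamma^2$, and the rate $(-\tau\log\tau)^{1/3}$ is unaffected.
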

In particular, in the case of an ill-posed problem, the posterior distribution provides information only about $\PA x\true$. Example of an ill-conditioned problem, where the eigenvalues of $A$ are positive but decrease to zero with growing dimensions $n, p$, is considered in Section~\ref{sec:pgrows}.

The assumption of the corollary $\rho_{\rm K}(Y, y\exact) \leqslant C\sqrt{-\tau \log \tau}$ is satisfied for Gaussian random variables $Y$
as well as  for other distributions from the exponential family (Section~\ref{sec:metrics}).


In the following corollary we list the conditions necessary for the convergence of the posterior distribution.
\begin{corollary}\label{cor:CondConv}
Under the assumptions of Theorem~\ref{th:KyFanPostInt}, conditions necessary for the posterior distribution to converge to $\delta_{x^\star}$ as $\tau \to 0$ are:
\begin{gather}
\rho_{\rm K}(Y, y\exact)\to 0, \quad \nu \to 0,  \quad  \tau \trace(H_{\nu}^{-1})  \to 0, \notag \\
\delta \to 0, \quad \frac{\lambda_{\min}(H_{\nu})\delta^2}{\tau} \to \infty, \quad  \delta > 0.5(\bar{c}_1 \rho_{\rm K}(Y, y\exact) + \bar{c}_2 \nu),\\
\delta \rho_{\rm K}(Y, y\exact)/\sqrt{\tau} \to 0.
\end{gather}
\end{corollary}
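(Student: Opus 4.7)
\textbf{Proof plan for Corollary~\ref{cor:CondConv}.}
My plan is to read the necessary conditions directly off the three terms inside the maximum in the Ky Fan upper bound of Theorem~\ref{th:KyFanPostInt}, together with the validity conditions on $\delta$ from Section~\ref{sec:QuadraticApprox} that are required for that bound to hold. Since $\rho_{\rm K}(\mu_{\rm post}, \delta_{x^\star})\to 0$ is equivalent to each term in the maximum tending to zero, it suffices to analyse them one at a time.

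First, the term $2\rho_{\rm K}(Y,y\exact)$ vanishes iff $\rho_{\rm K}(Y,y\exact)\to 0$, giving the first condition. Next, the Gaussian-approximation piece $\bar{c}_1 \rho_{\rm K}(Y,y\exact) + \bar{c}_2\nu + [-4\tau\trace(H_\nu^{-1})\log(4\tau\trace(H_\nu^{-1}))]^{1/2}(1+\Delta_{\star,K}(\delta))$ must tend to zero. Recalling from \eqref{eq:tildec1c2} and \eqref{eq:defLambda} that $\bar{c}_1,\bar{c}_2$ are bounded below by positive constants once $\tilde\lambda<1$, the first piece reduces to $\rho_{\rm K}(Y,y\exact)\to 0$, the second forces $\nu\to 0$, and the third forces $\tau\trace(H_\nu^{-1})\to 0$ (the $t\log(1/t)$ function being monotone near $0$). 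The factor $(1+\Delta_{\star,K}(\delta))=O(1)$ requires $\Delta_{\star,K}(\delta)=o(1)$, and tracing through its definition (\ref{eq:DefDeltaStarK}) — the error from the local quadratic approximation of $h_y$ inside $B(x^\star,\delta)$ combined with the convergence-in-$Y$ bounds — yields the requirement $\delta\rho_{\rm K}(Y,y\exact)/\sqrt{\tau}\to 0$.

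Third, the tail term $\Delta_0/(1+\Delta_0)\to 0$ iff $\Delta_0\to 0$. Applying the large-deviation control \eqref{eq:LareDevIntegral} on the ratio in \eqref{eq:DefD0}: on $B(x^\star,\delta)$ we have the quadratic lower bound $h_y(x)-h_y(x^\star)\gtrsim \tfrac12\lambda_{\min}(H_\nu)\|x-x^\star\|^2$ (valid once the smoothness-in-$x$ perturbation is absorbed, which is ensured by $\delta\to 0$), so the denominator scales like a Gaussian normaliser while the numerator decays exponentially provided $\lambda_{\min}(H_\nu)\delta^2/\tau\to\infty$. These two conditions, together with $\delta\to 0$ itself, are therefore necessary.

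Finally, the condition $\delta > 0.5(\bar{c}_1\rho_{\rm K}(Y,y\exact)+\bar{c}_2\nu)$ is needed to guarantee that the mean shift of the local Gaussian approximation — whose magnitude is precisely $\bar{c}_1\rho_{\rm K}(Y,y\exact)+\bar{c}_2\nu$ by the bias term in the theorem — lies inside $B(x^\star,\delta)$, so that the Gaussian calculation used to obtain the bound is meaningful. I expect the main obstacle to be the careful bookkeeping of $\Delta_{\star,K}(\delta)=o(1)$: extracting the condition $\delta\rho_{\rm K}(Y,y\exact)/\sqrt{\tau}\to 0$ requires unpacking the definition of $\Delta_{\star,K}$ in terms of the perturbation bounds \eqref{eq:BoundedDerDiffF2} on $\nabla^2\tilde f_y$ and matching scales with the Gaussian rescaling by $\sqrt{\tau}$. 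Everything else reduces to monotone functions of the individual terms in the Ky Fan bound.
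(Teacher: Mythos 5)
Your proposal is correct and follows essentially the same route as the paper, which gives no separate proof of this corollary: the conditions are read off directly from the three terms in the maximum of Theorem~\ref{th:KyFanPostInt} together with the requirements that $\Delta_0\to 0$, $R^\star(\delta)\to\infty$ and $\Delta_{\star,K}(\delta)=o(1)$, i.e.\ the assumptions on $\delta$ from Section~\ref{sec:QuadraticApprox} restated in terms of $\lambda_{\min}(H_\nu)$ and the bias $\bar{c}_1\rho_{\rm K}(Y,y\exact)+\bar{c}_2\nu$. Your identification of each condition with the corresponding term (bias inside the ball, Gaussian mass concentrating in $B(x^\star,\delta)$, tail ratio $\Delta_0$ vanishing) matches the intended argument.
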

It is often of interest to consider the case of growing dimensions $p$ and $n$. The upper bound depends on $p$ and $n$ via the $\ell^2$ vector norms  in $\mathbb{R}^p$ and $\mathbb{R}^n$ that underly the Ky Fan distances, the trace of $p\times p$ matrix $H_{\nu}^{-1}$, and the bias caused by the prior $||H_{\nu}^{-1} \nabla g(x^\star)||$ that is the $\ell^2$ norm of a $p$-dimensional vector (in fact, of $\rank(A^T A)$ -dimensional vector, since $x^\star$ is interior point and hence $(I-\PA) \nabla g(x^\star)=0$). Depending on the interpretation of the finite dimensional model, for instance, a discretisation on a grid or a spectral cutoff framework, the effect of growing dimensions can be different. This problem is discussed in Section~\ref{sec:pgrows}.

\subsection{Choice of $\delta$}
\label{sec:largedev}

Now, we discuss how to choose $\delta$ in such a way that
\begin{eqnarray*}
\int_{ \X} e^{- (h_y(x) - h_y(x^\star))/\tau} dx = [1+o(1)]\int_{B(x^\star, \delta)} e^{- (h_y(x) - h_y(x^\star))/\tau} dx
 \end{eqnarray*}
 with high probability as $\tau \to 0$, i.e. that the condition (\ref{eq:LareDevIntegral}) $ \Delta_0(B(0,\delta))  \to 0 \quad \text{as} \quad \tau \to 0$ is satisfied with high probability.

We introduce the following additional notation. Diagonalise  the projection matrices $\PA$ and $I - \PA$ simultaneously, so that $\PA = U^T \diag(I_{p_0}, 0_{p_1}) U $, $I-\PA = U^T \diag(  0_{p_0}, I_{p_1}) U $ and $U^T U = I_p$, where $p_0 = \rank(A)$ and $p_1 = p-p_0$.
\begin{eqnarray*}
\Omega_{00} &=& \UU_0^T \nabla^2 f_{y\exact} (x^\star) \UU_0,\\
B_{11} &=&  \UU_1^T \nabla^2 g(x^\star) \UU_1,\\
x_0 &=& H^{-1} \nabla h_{y\exact}(x^\star).
\end{eqnarray*}

First we consider the integral of $e^{-h_y(x)/\tau}$ over $B(x^\star, \delta)$.
\begin{lemma}\label{lem:LocalConst} Assume that $\Omega_{00}$ and $B_{11}$ are of full rank.
Under the assumptions on $f_y$, $g$ and assumption (i) on $\delta$ stated in Section~\ref{sec:QuadraticApprox},
\begin{eqnarray*}
\int_{B(x^\star, \delta)} e^{-[h_y(x)- h_y(x^\star)]/\tau} dx = \tau^{p_0/2} \gamma^{p_1} \frac{ (2\pi)^{p/2} e^{ x_0^T H x_0 /(2\tau)} }{ [\det(\Omega_{00}) \det(B_{11})]^{1/2} } [1+o_P(1)].
 \end{eqnarray*}

In particular, this implies that
\begin{eqnarray}\label{eq:D0expr}
\Delta_0(B(0,\delta)) &=& C_H \tau^{-p_0/2} \gamma^{-p_1} \int_{\X \setminus B(x^\star, \delta)} e^{-[h_y(x) - h_y(x^\star)]/\tau} dx \, [1+o_P(1)],
\end{eqnarray}
where $C_H = (2\pi)^{-p/2} [\det(\Omega_{00}) \det(B_{11})]^{1/2}\, e^{ - x_0^T  H  x_0 /(2\tau)}$.
 \end{lemma}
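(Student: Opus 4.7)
The plan is a Laplace-type approximation on $B(x^\star,\delta)$, accounting for the fact that the Hessian $H = A^T V_{y\exact}(x^\star)A + \nu B(x^\star)$ has two natural scales: $\sqrt{\tau}$ in the identifiable directions $\UU_0$, where $H$ acts as $\Omega_{00}$, and $\gamma$ in the null-space directions $\UU_1$, where $H$ acts as $\nu B_{11}$. The relation $\gamma^2\nu = \tau$ ties these scales together.

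First I would Taylor expand $h_y(x) - h_y(x^\star)$ around $x^\star$ to second order with integral-form remainder. The smoothness-in-$x$ assumption yields, for $x\in B(x^\star,\delta)$, a matrix bound $|v^T(\nabla^2 h_y(x^\star + s(x-x^\star)) - \nabla^2 h_y(x^\star))v| \leq \delta(\|A\|\, v^T A^T C_f A v + \nu\, v^T C_g v)$. Convergence-in-$y$ then replaces $\nabla h_y(x^\star)$ by $\nabla h_{y\exact}(x^\star) = \nu\nabla g(x^\star)$ --- using that $\nabla f_{y\exact}(x^\star) = A^T\nabla\tilde f_{y\exact}(Ax^\star) = 0$ --- and $\nabla^2 h_y(x^\star)$ by $H$, incurring $y$-dependent errors that are uniformly controlled on $\Y_{\rm loc}$. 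I then complete the square so that the linear and quadratic parts become $\tfrac{1}{2}(x-x^\star+x_0)^T H(x-x^\star+x_0) - \tfrac{1}{2}x_0^T H x_0$, pulling the factor $e^{x_0^T H x_0/(2\tau)}$ outside the integral.

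Next I change variables via $x - x^\star + x_0 = \sqrt{\tau}\,\UU_0 s + \gamma\,\UU_1 t$, whose Jacobian is exactly $\tau^{p_0/2}\gamma^{p_1}$. Using $A\UU_1 = 0$ and $\gamma^2\nu = \tau$, a direct block computation gives
\[
\frac{1}{\tau}(x-x^\star+x_0)^T H(x-x^\star+x_0) = s^T\Omega_{00}s + t^T B_{11}t + \nu\,s^T B_{00}s + 2\sqrt{\nu}\,s^T B_{01} t,
\]
so the last two terms vanish pointwise as $\nu\to 0$. Under assumption (i) on $\delta$, $\delta/\sqrt{\tau}\to\infty$ and $\delta/\gamma\to\infty$, while $\|x_0\| = O(\nu) \ll \delta$; hence the image of $B(x^\star,\delta)$ in the $(s,t)$ coordinates contains a ball of radius tending to infinity in each block, and dominated convergence against the limiting density $\exp\{-(s^T\Omega_{00}s + t^T B_{11}t)/2\}$ yields the product integral $(2\pi)^{p/2}/\sqrt{\det\Omega_{00}\det B_{11}}$. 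Combining with the Jacobian and the prefactor gives the stated leading order; the $[1+o_P(1)]$ aggregates the Hessian-remainder bound (of order $\delta$), the $y$-dependent errors (of order $\rho_{\rm K}(Y,y\exact)$ on $\Y_{\rm loc}$), and the $\sqrt{\nu}$ cross-term. The formula for $\Delta_0$ then follows by substituting the denominator approximation into \eqref{eq:DefD0}, since the numerator is left unchanged.

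The main obstacle is bookkeeping the cubic remainder under the two-scale rescaling. Applied to $v = x-x^\star$, the Hessian-difference bound splits into four blocks: the $(s,s)$ and $(t,t)$ blocks each contribute $O(\delta)$ (the latter via $\nu\gamma^2/\tau = 1$), the $A^T C_f A$ cross-term in $(s,t)$ vanishes because $A\UU_1=0$, and the $\nu C_g$ cross-term is $O(\delta\sqrt{\nu})$. All four are $o(1)$ under assumption (i) on $\delta$, which is the precise quantitative reason those conditions are imposed, and promotes the error from $o(1)$ to $o_P(1)$ uniformly on $\Y_{\rm loc}$.
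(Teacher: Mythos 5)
Your proof is correct and reaches the stated asymptotics, but it organises the Laplace approximation differently from the paper. The paper (Lemma~\ref{lem:Approx} and Proposition~\ref{prop:LaplaceApproxAsympt}) performs a \emph{single} rescaling $v=(x-x^\star)/\sqrt{\tau}$ with Jacobian $\tau^{p/2}$, sandwiches the Hessian between $\widetilde H=H+\delta D$ and $\bar H=H-\delta D$, and obtains the factor $\tau^{p_0/2}\gamma^{p_1}$ only at the end, through the determinant asymptotics $\det(\widetilde H)^{-1/2}\sim \nu^{-p_1/2}[\det(\Omega_{00})\det(B_{11})]^{-1/2}$ combined with $\gamma^2=\tau/\nu$. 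You instead make the two-scale structure explicit from the outset via the block substitution $x-x^\star+x_0=\sqrt{\tau}\,\UU_0 s+\gamma\,\UU_1 t$, whose Jacobian is $\tau^{p_0/2}\gamma^{p_1}$ directly, and then verify by the block computation (using $A\UU_1=0$ and $\gamma^2\nu=\tau$) that the quadratic form converges to $s^T\Omega_{00}s+t^T B_{11}t$ with $O(\nu)$ and $O(\sqrt{\nu})$ corrections. The two routes are computationally equivalent; yours makes transparent \emph{why} the two scales $\sqrt{\tau}$ and $\gamma$ appear and why the cross-terms are negligible, while the paper's single-scale version integrates more smoothly with the upper/lower Gaussian bounds it reuses in Theorem~\ref{th:ProkhUpperInt}. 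One small inaccuracy: you state $\|x_0\|=O(\nu)$, whereas on $\Y_{\rm loc}$ one only has $\|x_0\|\le c_1\rho_{\rm K}(Y,y\exact)+c_2\nu$ (the random part does not vanish); this does not affect the conclusion since assumption (i) imposes $\delta\gg\rho_{\rm K}(Y,y\exact)+\nu$, and you do account for the $\rho_{\rm K}$-sized error in your final error aggregation.
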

See Proposition \ref{prop:LaplaceApproxAsympt} in the Appendix for further details and the proof.

\subsection{Convergence for exponential family}\label{sec:ExpFamily}

In this section we apply Theorem~\ref{th:KyFanPostInt} to the case of the exponential family \eqref{eq:ExpFamilyDef} with
$$
\tilde{f}_y(\ETA) =  \sum_{i=1}^n  [ y_i b(\ETA_i)-c(\ETA_i)]
$$
and identity link function $G$, i.e.  $ \ETA = Ax$. In particular, we have that $y\exact =   A x\true$. 

For all $\ETA$ that is an interior point of $ A \X$, we have
\begin{eqnarray*}
\nabla \tilde{f}_y(\ETA) = ((y_1-\ETA_1) b'(\ETA_1),\ldots, (y_n-\ETA_n) b'(\ETA_n))^T \\
\nabla^2 \tilde{f}_y(\ETA) = \diag((y_i-\ETA_i) b''(\ETA_i) -b'(\ETA_i))
\end{eqnarray*}
due to identity $c'(\ETA_i) = b'(\ETA_i) \, \ETA_i $ which implies $c''(\ETA_i) = b''(\ETA_i) \, \ETA_i + b\,'(\ETA_i)$ and $c'''(\ETA_i) = b'''(\ETA_i) \, \ETA_i + 2b\,''(\ETA_i)$. In particular, we have that for the true value of the parameter $\ETA = y\exact$, $\E Y_i = {y\exact}_i$ and $\Var(Y_i) = -\tau \, [b \,'({y\exact}_i)]^{-1}$.

Also, since $y\exact$ is an interior point of $ A \X$ and it minimises $\tilde{f}_{y\exact}(\ETA)$, it implies that $\nabla^2 \tilde{f}_{y\exact}(y\exact)$ is positive definite, i.e. $b'({y\exact}_i) <0$ for all $i$.

Now we verify the likelihood-specific assumptions.
Assumptions stated in Section~\ref{sec:Setup} have already been verified for the exponential family.

\begin{lemma}\label{lem:XYcondExp} Assume that the distribution of $Y$ belongs to the exponential family \eqref{eq:ExpFamilyDef} 
such that function $\sum_i[{y\exact}_i b(\eta_i)- c(\eta_i)]$ is three times differentiable, and its  third derivatives are bounded on $ A B(x^\star, \delta)$ for small enough $\delta$.

Then, assumptions  \eqref{eq:BoundedDerG} and \eqref{eq:BoundedDerDiffF2} on smoothness in $x$ and convergence in $Y$  are satisfied if with
\begin{eqnarray*}
M_{f,\, 1} = \diag(|b'({y\exact}_i)|),\quad 
M_{f,\, 2} = \diag(|b''({y\exact}_i)|), 
\end{eqnarray*}
and $C_{f} = \diag\left(C_{f,1}, \ldots, C_{f,n}\right)$ where 
$$
C_{f,i} = \max_{x \in B(x^\star, \delta)} |    b'''([  Ax]_i) \max_{y\in \Y_{\rm loc}} y_i - c'''([ Ax]_i)|. 
$$
 
\end{lemma}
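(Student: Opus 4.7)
The plan is to exploit the diagonal structure of the Hessian of $\tilde{f}_y$ for the exponential family and then verify each of the two assumptions by a direct componentwise calculation. Since $\tilde{f}_y(\eta) = \sum_i [y_i b(\eta_i) - c(\eta_i)]$ decouples across coordinates, the matrix $\nabla^2 \tilde{f}_y(\eta)$ is diagonal, and I can work one coordinate at a time. I will use the canonical-form identity $c'(\eta) = b'(\eta)\eta$, differentiated to give $c''(\eta)=b''(\eta)\eta+b'(\eta)$ and $c'''(\eta)=b'''(\eta)\eta+2b''(\eta)$, which both produces the simplified derivative formulas above \eqref{eq:BoundedDerG} and collapses the messy derivatives that appear in the smoothness check into the $b''',c'''$ combination defining $C_{f,i}$.

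For the smoothness assumption \eqref{eq:BoundedDerG}, the $i$th diagonal entry of $\nabla^2\tilde{f}_y(\eta)$ is $h_i(\eta_i) := (y_i-\eta_i)b''(\eta_i) - b'(\eta_i)$. By the mean value theorem applied to $h_i$ on the segment between $[Ax^\star]_i$ and $[Ax]_i$, there exists $\xi_i$ with
\[
[\nabla^2 \tilde{f}_y(Ax) - \nabla^2 \tilde{f}_y(Ax^\star)]_{ii} = h_i'(\xi_i)\,[A(x-x^\star)]_i.
\]
A direct computation, using $c'''(\eta)=b'''(\eta)\eta+2b''(\eta)$, yields $h_i'(\eta) = y_i b'''(\eta) - c'''(\eta)$; this is bounded in absolute value by $C_{f,i}$ over the relevant range of $\eta_i$ and $y_i$ by the boundedness assumption on third derivatives. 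The spectral-norm inequality $|[A(x-x^\star)]_i|\leq \|A\|\,\|x-x^\star\|_\infty \leq \|A\|\,\delta$ then gives $|[\nabla^2\tilde{f}_y(Ax) - \nabla^2\tilde{f}_y(Ax^\star)]_{ii}| \leq \delta\|A\| C_{f,i}$, and since both the Hessian difference and $C_f$ are diagonal, the quadratic form bound $|v^T(\nabla^2\tilde{f}_y(Ax) - \nabla^2\tilde{f}_y(Ax^\star))v|\leq \delta\|A\|\,v^T C_f v$ follows coordinatewise.

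For the convergence-in-$Y$ assumption \eqref{eq:BoundedDerDiffF2}, note that $[Ax^\star]_i = {y\exact}_i$ since $x^\star$ lies in the affine set $\{x: Ax = Ax\true = y\exact\}$ (identity link). Direct subtraction gives the identities
\[
\nabla \tilde{f}_y(Ax^\star) - \nabla \tilde{f}_{y\exact}(Ax^\star) = \diag(b'({y\exact}_i))\,(y - y\exact),
\]
\[
\nabla^2 \tilde{f}_y(Ax^\star) - \nabla^2 \tilde{f}_{y\exact}(Ax^\star) = \diag\bigl((y_i-{y\exact}_i)\,b''({y\exact}_i)\bigr).
\]
Because $y\exact$ is an interior minimiser of $\tilde{f}_{y\exact}$, the Hessian $\nabla^2\tilde{f}_{y\exact}(y\exact) = \diag(-b'({y\exact}_i))$ is positive definite, so $b'({y\exact}_i)<0$ and hence $M_{f,1} = \diag(|b'({y\exact}_i)|) = -\diag(b'({y\exact}_i))$; the overall sign disappears in the norm, giving equality in the first inequality. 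The second inequality reduces to the coordinatewise identity $|v^T \diag((y_i-{y\exact}_i)b''({y\exact}_i))v| \leq \sum_i |b''({y\exact}_i)||y_i-{y\exact}_i| v_i^2$, which is exactly $v^T M_{f,2}^{1/2}\diag(|y_j - {y\exact}_j|)M_{f,2}^{1/2}v$ written out in diagonal coordinates.

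The main obstacle, such as it is, is bookkeeping: tracking the sign conventions so that $M_{f,1}$ is a genuine positive-definite object (requiring the observation $b'({y\exact}_i)<0$ extracted from positive-definiteness of the Hessian at the minimiser), and applying the algebraic identity for $c'''$ so that the mean-value derivative simplifies to the combination $y_i b'''-c'''$ that defines $C_{f,i}$. Once these two ingredients are in place, the diagonal structure of the exponential-family Hessian reduces both assumptions to scalar inequalities that hold coordinatewise, with no further analysis required.
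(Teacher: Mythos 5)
Your proof is correct and follows essentially the same route as the paper's: exploit the diagonal, decoupled structure of the exponential-family Hessian, use the identity $c'''(\eta)=b'''(\eta)\,\eta+2b''(\eta)$ so that the derivative of the $i$th diagonal entry $(y_i-\eta_i)b''(\eta_i)-b'(\eta_i)$ collapses to $y_i b'''(\eta_i)-c'''(\eta_i)$, and verify the convergence-in-$Y$ bounds by direct subtraction at $\eta=y\exact$. The only blemish is the intermediate inequality $|[A(x-x^\star)]_i|\leq \|A\|\,\|x-x^\star\|_\infty$, which is not valid for the spectral norm in general; replacing $\|\cdot\|_\infty$ by the Euclidean norm gives $|[A(x-x^\star)]_i|\leq \|A(x-x^\star)\|\leq \|A\|\,\delta$ and the rest of your argument goes through unchanged.
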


Then, for a distribution from the exponential family,
\begin{eqnarray*}
V_{y\exact}(x^\star) &=&   \diag(- b'({y\exact}_i)) ,
\end{eqnarray*}
and hence $H_{\nu} =  A^T\diag(- b'({y\exact}_i))   A + \nu \nabla^2 g(x^\star)$. 
Note that $V_{y\exact}(x^\star) =   M_{f1}$.

Now we state the constants for Gaussian, rescaled Poisson and Gamma distributions.
\begin{enumerate}
\item Gaussian distribution: $Y \sim \N(\ETA, \tau \Sigma)$ with $\Sigma = \diag(\sigma_i^2)$. In this case, $b(\ETA_i)= -\ETA_i/\sigma_i^2$, $c(\ETA_i)=0.5\ETA_i^2/\sigma_i^2$, and hence
$$
M_{f,\, 1} =V_{y\exact}(x^\star)= \Sigma^{-1}, \quad  M_{f 2} =  C_{f} = 0.
$$
Note that all constants are independent of $y\exact$.

\item For the rescaled Poisson distribution: $Y_i/\tau \sim Pois(\ETA_i/\tau)$, we have $b(\ETA_i) = -\log (\ETA_i)$, $c(\ETA_i) = \ETA_i$.
Assume that ${y\exact}_i > 0$ for all $i$. This implies that, for small enough $\delta$,
\begin{eqnarray*}
M_{f,\, 1} =  V_{y\exact}(x^\star)= \diag({y\exact}_i^{-1}), \quad  M_{f 2} = \diag({y\exact}_i^{-2}), \\ 
C_{f} = 2\, \diag\left( ({y\exact}_i + \rho_{\rm K}(Y, y\exact)) [{y\exact}_i-\delta || A||]^{-3}\right).
\end{eqnarray*}

\item For the Gamma distribution with the shape parameter $a/\tau$, we have $b(\ETA_i) = a/\ETA_i$ and $c(\ETA_i) = -a \log (\ETA_i)$. This implies
\begin{eqnarray*}
M_{f,\, 1} =  V_{y\exact}(x^\star)= a\,\diag({y\exact}_i^{-2}), \quad  M_{f 2} = 2a\, \diag({y\exact}_i^{-3}), \\
C_{f} = 2a \,\diag\left(  (4{y\exact}_i+ \rho_{\rm K}(Y, y\exact))\,[{y\exact}_i-\delta ||  A||]^{-4} \right).
\end{eqnarray*}

\end{enumerate}
Note that $C_{f} \leq C M_{f 2}$ where constant $C$ is close to an absolute constant   as $\delta \to 0$; this constant is  1 for Gaussian, 2 for rescaled Poisson and 4 for Gamma.

Therefore, the likelihood-related assumptions of Theorem~\ref{th:KyFanPostInt} are satisfied, and the results of Corollary~\ref{eq:cor:KyFanRate} apply to the exponential family, namely for a well-posed case we can take any $\gamma\geq (\tau /\log(1/\tau))^{1/4}$ and the rate is $\left( - \tau\log  \tau  \right)^{1/2}$, whereas for an ill-posed case, the best possible rate of convergence is $[\tau  \log (1/\tau)]^{1/3}$ with $\gamma \asymp \tau^{1/3}[\log(1/ \tau)]^{-1/6}$.


\section{Consistency in inverse problems with growing dimension}
\label{sec:pgrows}

We consider two cases where dimensions $n$ and $p$ grow. In the first case, the considered finite dimensional inverse problem \eqref{eq:exactInDim} is an equally spaced discretisation of an infinite dimensional inverse problem $\g = \G  (\A \f)$. In the second case the finite dimensional inverse problem \eqref{eq:exactInDim} is formulated for the eigenvalues of operator $\A^* \A$ and the coefficients of functions $f$ and $g$ in the corresponding separable  Hilbert space. 

In this section, we denote by $C$ a generic constant $C \in (0,\infty)$ that can be different even within a single equation.

\subsection{Ill-posed inverse problem, discretisation on a grid}

We assume that functions $\f$ and $\g$ are defined on a finite interval, e.g. $[0,1]$, without loos of generality, and $x$, $y$ and $A$ are discretisations of the functions and the operator on a regular grid as discussed in Section~\ref{sec:intro1}.  The appropriate distances in this case are $\frac 1 {\sqrt{p}} ||x-x^\star||$ and $\frac 1 {\sqrt{n}} ||y-y\exact||$ respectively that are approximations of the distances between functions in $L^2([0,1])$ discretised at $p$ (or $n$) equally spaced points. The distance between functions is the $L^2$ norm of their difference. Denote the Ky Fan metric based on these rescaled distances by $\tilde\rho_{\rm K}$.

Then, Theorem~\ref{th:KyFanPostInt} together with Corollary~\ref{cor:CondConv} can be reformulated as follows.

\begin{theorem}\label{th:KyFanPostInt_NPgrow}
Suppose we have the Bayesian model defined in Section
\ref{sec:Setup}, and that the assumptions on $f_y$, $g$ and $\delta= \tilde\delta \sqrt{p}$ stated in Section~\ref{sec:QuadraticApprox} hold.

Assume that
\begin{enumerate}
\item  $x^\star$ is an interior point of $\X$,
\item $H_{\nu} = A^T V_{y\exact}(x^\star) A +\nu B(x^\star)$ is of full rank,
\end{enumerate}
and that
$$\tilde\rho_{\rm K}(Y, y\exact)\sqrt{n /p}\to 0, \quad    \frac{p  \tilde\delta^2}{\tau \trace(H_{\nu}^{-1}) } \to \infty, \quad
 \frac{\tau \trace(H_{\nu}^{-1})}{p} \to 0, \quad  \frac{ \tilde\delta^2 [\tilde\rho_{\rm K}(Y, y\exact)]^2}{\tau} \to 0.
$$

Then, $\exists \tau_0 >0$ such that for $\forall \,
\tau \in (0, \tau_0]$, and small enough $\nu$ and $\tau/\nu$,
\begin{eqnarray}
\tilde\rho_{\rm K}(\mu_{\rm post}, \delta_{x^\star}) &\leqslant&
  \max\left\{2 \tilde\rho_{\rm K}(Y, y\exact) \sqrt{\frac n p}, \,\, \frac{\Delta_0}{\sqrt{p}(1+ \Delta_0)}, \right.   \\
  && \frac{  ||H_{\nu}^{-1} A^T M_{f1}||}{1 -  \tilde\lambda} \, \tilde\rho_{\rm K}(Y, y\exact) \sqrt{\frac n p}+ \frac{\nu}{ \sqrt{p}} \, \frac{||H_{\nu}^{-1}\nabla g(x^\star) ||}{1 -  \tilde\lambda} \notag\\
&+& \left. \sqrt{-  \frac{4\tau \trace(H_{\nu}^{-1})}{p} \log\left(   4\tau \trace(H_{\nu}^{-1})  \right)  } (1+\Delta_{\star, K}(\delta))\right\},  \notag
\end{eqnarray}
where $\tilde\lambda$ is defined by \eqref{eq:defLambda}, $\Delta_0 = \Delta_0(B(0,\delta))$ is given by (\ref{eq:D0expr}), and $\Delta_{\star, K}(\delta)$ is defined by (\ref{eq:DefDeltaStarK}).


Under the assumptions on $\tau$, $\nu$ and $\delta$, $\Delta_{\star, K}(\delta)=o(1)$  as $\tau \to 0$.
\end{theorem}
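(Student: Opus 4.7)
The plan is to obtain this theorem by a rescaling argument applied to Theorem~\ref{th:KyFanPostInt} together with Corollary~\ref{cor:CondConv}. Passing from the Euclidean metric to the rescaled metrics $||x-x^\star||/\sqrt{p}$ on $\X$ and $||y - y\exact||/\sqrt{n}$ on $\Y$ changes the Prokhorov and Ky Fan metrics nontrivially: distances shrink by $1/\sqrt{p}$ but probabilities do not, so the two entries in the Ky Fan definition do not scale in parallel. Nonetheless, every term in the bound of Theorem~\ref{th:KyFanPostInt} is either a probability-type quantity (such as $\Delta_0/(1+\Delta_0)$ and the bad-event probability $\rho_{\rm K}(Y,y\exact)$) or a distance-type quantity in $\X$ (the two bias terms and the Gaussian-approximation width), and each transforms in a way that can be tracked explicitly through the arguments of Theorem~\ref{th:ProkhUpperInt} and Theorem~\ref{lem:KyFanDiffBounds}.

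First, I would set $\delta = \tilde\delta\sqrt{p}$ and verify that the four displayed conditions are the natural rescaled analogues of the hypotheses of Theorem~\ref{th:KyFanPostInt} and Corollary~\ref{cor:CondConv}. The condition $\tilde\rho_{\rm K}(Y,y\exact)\sqrt{n/p}\to 0$ forces the random bias contribution, once expressed in the rescaled $\X$-metric, to vanish; $p\tilde\delta^2/[\tau\,\trace(H_\nu^{-1})]\to\infty$ plays the role of the local-approximation condition $\delta^2\lambda_{\min}(H_\nu)/\tau\to\infty$ relaxed via the trace bound; $\tau\,\trace(H_\nu^{-1})/p\to 0$ forces the rescaled Gaussian-approximation width to vanish; and $\tilde\delta^2\tilde\rho_{\rm K}^2/\tau\to 0$ is the restatement in rescaled units of the growth condition $\delta[\rho_{\rm K}(Y,y\exact)+\nu]^2/\tau\to 0$ from \eqref{eq:CondDeltaLocal}. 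The large-deviation hypothesis \eqref{eq:LareDevIntegral} on $\Delta_0$ is unchanged by rescaling.

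Next, I would apply Theorem~\ref{th:KyFanPostInt} and convert each term of its upper bound into rescaled form. Dividing distances in $\X$ by $\sqrt{p}$ and using $\rho_{\rm K}(Y,y\exact)=\sqrt{n}\,\tilde\rho_{\rm K}(Y,y\exact)$: the random-bias term $\bar c_1\rho_{\rm K}(Y,y\exact)$ becomes $\bar c_1\tilde\rho_{\rm K}(Y,y\exact)\sqrt{n/p}$; the prior-bias term $\bar c_2\nu$ becomes $\bar c_2\nu/\sqrt{p}$; and the Gaussian-approximation width is rewritten by reoptimising the Ky Fan tolerance as in Lemma~\ref{lem:KyFanNormal}, now with $\tau\,\trace(H_\nu^{-1})/p$ playing the role of the effective per-coordinate variance, giving $\sqrt{-4\tau\,\trace(H_\nu^{-1})/p\cdot\log(4\tau\,\trace(H_\nu^{-1}))}$. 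The lifting step through Theorem~\ref{lem:KyFanDiffBounds} carries over, producing the first entry $2\tilde\rho_{\rm K}(Y,y\exact)\sqrt{n/p}$ of the outer maximum, which bounds the probability of the bad event $\{Y\notin \Y_{\rm loc}\}$ expressed in rescaled $\Y$-units.

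The main obstacle is bookkeeping: ensuring that the constants $\bar c_1$, $\bar c_2$, $\tilde\lambda$ and the lower-order term $\Delta_{\star,K}(\delta)$ transform consistently, and that the Gaussian-tail optimisation in the rescaled metric recovers the logarithm of $4\tau\,\trace(H_\nu^{-1})$ (as stated) rather than the naively expected $4\tau\,\trace(H_\nu^{-1})/p$; the resulting $\log p$ discrepancy is absorbed into $1+\Delta_{\star,K}(\delta)$ together with other lower-order contributions. That $\Delta_{\star,K}(\delta)=o(1)$ follows along the same chain of estimates as in the proof of Theorem~\ref{th:KyFanPostInt}, since the four rescaled hypotheses together imply each constituent ratio vanishes as $\tau\to 0$.
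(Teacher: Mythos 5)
Your proposal is correct and follows essentially the same route as the paper, which states Theorem~\ref{th:KyFanPostInt_NPgrow} as a direct reformulation of Theorem~\ref{th:KyFanPostInt} (with Corollary~\ref{cor:CondConv}) under the rescaled metrics $\|x-x^\star\|/\sqrt{p}$ and $\|y-y\exact\|/\sqrt{n}$, without a separate proof in the appendix. Your explicit tracking of which terms are probability-type (unaffected by rescaling) versus distance-type (scaled by $1/\sqrt{p}$ or $1/\sqrt{n}$), and your observation that the argument of the logarithm in the Gaussian-width term differs from the naive $4\tau\,\trace(H_{\nu}^{-1})/p$ by a $\log p$ contribution absorbed into the lower-order factor, is if anything more careful than the paper's own implicit treatment.
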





For a one-dimensional model with homoscedastic Gaussian errors, the discretisation effect for a one-dimensional function on $\tau$ is $\tau = \sigma^2/n$. We will use this dependence of $\tau$ on $n$ below.

The necessary conditions for the posterior distribution to converge  with $||x-x^\star||\leq \delta = \sqrt{p}\tilde\delta$ in the ill-posed case are
\begin{gather*}
   \tilde\rho_{\rm K}(Y, y\exact) =o(\sqrt{p/n}),  \quad \gamma \gg \sqrt{ p/n}, \quad 
   \tilde\delta = o\left( \sqrt{p/n} \right). 
\end{gather*}
These conditions together with the assumption $\gamma \to 0$ imply that we must have $p/n \to 0$.

If $\rho_{\rm K}(Y, y\exact) \asymp  \sqrt{-n\tau \log(n\tau)} $,  then, as $n\to \infty$,
$$
n^{-1/2}\rho_{\rm K}(Y, y\exact) \asymp n^{-1/2} \sqrt{-n\tau \log(n\tau)} = n^{-1/2}\sigma\sqrt{ -2\log(\sigma)} \to 0.
$$

Denote $r= \rank(A)$ and assume that $||H_{\nu}^{-1}\PA \nabla g(x^\star)|| \asymp \sqrt{\rank(A)}$ since vector $\PA \nabla g(x^\star)$ is in $r$-dimensional subspace of $\mathbb{R}^p$ and $(I-\PA) \nabla g(x^\star)=0$ and $x^\star$ minimises $g(x)$ on $Ax= y\exact$.  
Then, we have $\trace(H_{\nu}^{-1}) \asymp r + \nu^{-1} (p-r)$ which implies
$$
\tau \trace(H_{\nu}^{-1})/p  = (r + n\gamma^2 (p-r))/(np) = r/(np) + \gamma^2 (1-r/p).
$$
If $r=p$ then this expression is of order $1/n$ otherwise the leading order is $\gamma^2$.

Therefore, for a well-posed problem with $r=p$ (assuming $\frac{\Delta_0}{1+ \Delta_0} \leq C$), the rate is
\begin{eqnarray}
\tilde\rho_{\rm K}(\mu_{\rm post}, \delta_{x^\star}) &\leqslant&
  C \left[ p^{-1/2} + [n \gamma^2]^{-1}+  \sqrt{\frac{\log n}{n} }\right],  \notag
\end{eqnarray}
i.e. we can choose $\gamma$ to be a constant and hence we obtain the parametric rate of convergence.

The rate for an ill-posed problem with $r<p$ (assuming $\frac{\Delta_0}{1+ \Delta_0} \leq C $):
\begin{eqnarray}
\tilde\rho_{\rm K}(\mu_{\rm post}, \delta_{x^\star}) &\leqslant&
  C \left[ p^{-1/2} + [n \gamma^2]^{-1}+ \gamma\left[- \log\left( \gamma^2  \right)  \right]^{1/2} \right],  \notag
\end{eqnarray}
The value of $\gamma^2$ that minimises this expression is $\gamma^2 \asymp [ n (\log n)]^{-1/3}$. Condition $\tau/\gamma^2 = \sigma^2/(n\gamma^2) \asymp n^{-2/3} (\log n)^{1/3} \to 0$ is satisfied.
This gives the rate of convergence
\begin{eqnarray}
\tilde\rho_{\rm K}(\mu_{\rm post}, \delta_{x^\star}) &\leqslant& C \left[ p^{-1/2} +  \left(\frac{\log n}{n}\right)^{1/3}\right].
\end{eqnarray}
The smallest  $p$ providing the fastest rate of convergence $(n^{-1} \log n)^{1/3}$ is $p \asymp \left(\frac{n}{\log n}\right)^{2/3}$.

There is also discretisation bias $||\f - \f\true|| - \frac 1 {\sqrt{p}}||x-x\true||$ that depends on smoothness of $\f\true$ \cite{MPdiscr}.


\subsection{Spectral cutoff estimator.}

Assume that we have a linear inverse problem $\g = \A \f$ with operator $\A: \, \HH_{\f} \to \HH_{\g}$ where $\HH_{\f}$ and $\HH_{\g}$ are separable Hilbert spaces  with orthonormal bases $\{\phi_j\}_{j=1}^{\infty}$ and $\{\psi_j\}_{j=1}^{\infty}$, respectively, and $\f\in \HH_{\f}$, $\g \in \HH_{\g}$. Assume that operator $\A$ is compact, then we can use the eigenfunctions of self-adjoint operators $\A^T \A$ and $\A \A^T$ as the bases of $\HH_{\f}$ and $\HH_{\g}$, respectively. Denote the singular values of $\A$ by $a_j $, $j=1,2,\ldots$, and the coefficients of $\f$ and $\g$ in the corresponding Hilbert spaces as $\fk_j$ and $\gk_j$, respectively ($j=1,2,\ldots$). Then, the original inverse problem can be written as
$$
\gk_{j} = a_j  \fk_{j},\quad j=1,2,\ldots.
$$

In practice, instead of observing $(\gk_j)_{j=1}^{\infty}$, we have a finite number of their  noisy observations $y_1,\ldots,y_p$ that we assume to be independent random variables with a distribution from the exponential family \eqref{eq:ExpFamilyDef}:
$$
\tilde{f}_y(\ETA) = \sum_{i=1}^p  [ y_i b(\ETA_i)-c(\ETA_i)],
$$
with the noise level $\tau \to 0$ and the true value of vector $\ETA \in \mathbb{R}^p$ is $\gk = (\gk_1,\ldots, \gk_{p})$. This is an example of the finite-dimensional inverse problem considered in Sections~\ref{sec:SectionSetup} and \ref{sec:ConvergenceKyFan} with $n=p$, identity link function $G$  and diagonal matrix $A = \diag(a_1,\ldots, a_p)$. Here we assume that matrix $A$ is of full rank but the eigenvalues decrease to zero as $p\to \infty$ so that the finite-dimensional problem is ill-conditioned. Thus, here $x^\star = x\true$. To construct an estimators $\f$, we use estimators of $\fk$ as $p\to \infty$. Our motivating examples are Poisson and Gaussian distributions, however, we derive the conditions necessary for consistency for any distribution from the exponential family \eqref{eq:ExpFamilyDef} satisfying stated assumptions.

Given estimates  $\hat{x}_j$, the corresponding estimator of the unknown function $\f\true$ then can be constructed as
\begin{eqnarray}\label{eq:DefHatf}
\hat{\f}(u)  = \sum_{j=1}^p \hat{x}_j \phi_j(u).
\end{eqnarray}
The posterior distribution of $\f$ can be constructed in a similar way using the posterior distribution of $(x_1,\ldots, x_p)$.

Following  \textcite{vdVaart2011}, we consider an example of a mildly ill-posed inverse problem and a smooth unknown function $\f\true$, with some $\alpha, \beta >0$:
$$
|a_j | \asymp j^{-\alpha}, \quad  |\fk_{j}| \asymp j^{-\beta-1/2}, \quad j=1,\ldots, p.
$$
The latter condition on $\fk$ with $p\to \infty$ implies that $\f\true$ belongs to the Sobolev space relative to the basis $(\psi_j)$, $S^{\beta'}$, for all $\beta' \in (0,\beta)$, since in this case
$$
\sum_{j=1}^p j^{-2\beta -1} (1 + j^2)^{\beta'} \leq C \sum_{j=1}^p j^{-2(\beta-\beta') -1} \leq \tilde{C} p^{-2(\beta-\beta')} <\infty
$$
as $p\to \infty$ (see also \textcite{vdVaart2011}). These assumptions imply that $\gk_j \asymp j^{-\alpha -  \beta -1/2}$.

Consider first the prior distribution such that $\nabla^2 g(x^\star)$ is a diagonal matrix, i.e. $\nabla^2 g(x^\star) = \diag(b_j^2)$ with $b_j\geq 0$.
For example, if the prior is $\N(0, \gamma^2 B^{-1})$, then $\nabla^2 g(x^\star) = B = \diag(b_j^2)$, i.e. $b_j^2$ is the prior precision of $x$.
Assume that
$$
\nabla^2_{jj} \tilde{f}_{y\exact}(y\exact) \asymp j^{s},  \quad  b_j \asymp j^{\kappa +1/2},\quad j=1,\ldots,
n.
$$
Parameter $s$ depends on the error distribution, in particular, how the information matrix $\nabla^2\tilde{f}_{y\exact}(y\exact)$ depends on the parameter $y\exact$. As demonstrated in Section~\ref{sec:ExpFamily}, $s = 0$ for the Gaussian  errors, for the scaled Poisson distribution $s = \alpha+ \beta+1/2$ and for the Gamma model $s=2(\alpha+\beta +1/2)$. 
Also, for the exponential family, $\Var(Y_j) \asymp \tau j^{-s}$, since the variance is the inverse of the Fisher information.

Following the argument of \textcite{vdVaart2011}, the  assumption on $b_j$ under the Gaussian prior distribution corresponds to the a priori assumption that the unknown function $\f\true$ belongs to the Sobolev spaces (relative to the chosen basis) $S^{\kappa'}$ with $\kappa' \in (0,\kappa)$, since in this case
$$
\E \left[\sum_{j=1}^p (1+j^{2})^{\kappa'} x_j^2 \right]=  \sum_{j=1}^p (1+j^{2})^{\kappa'} b_j^{-2} <\infty \quad \text{as} \quad p\to \infty
$$
where the expectation is taken with respect to the prior distribution.

The  constants $M_{f,k}$ and $C_{y, f}$ that were derived in Section~\ref{sec:ExpFamily} for the exponential family, satisfy
$$
M_{f,1} \asymp \diag(j^{s}) ,\quad M_{f,2} \asymp C_{y, f} \asymp \diag(j^{s_2})
$$
for $y\in \Y_{\rm loc}$. 
Then, $H_{\nu} \asymp \diag(i^{-2\alpha+s} + \nu i^{2\kappa +1})$.

Assume that $|\nabla_j g(x\true)| \leq C j^{2\kappa+1 - \beta-1/2}$ which is true for the Gaussian prior with zero mean and precision matrix $\diag(b_j^2)$. For non-Gaussian prior distributions, the dependence of $|\nabla_j g(x\true)|$ on $j$ could be different.

The appropriate metric in $\X\subseteq \mathbb{R}^p$ in this case is the $p$-dimensional $\ell_2$ vector norm  $||x-x\true||$ that converge to the  $L^2$ function norm  $||\hat{\f}- \f\true||_2$ as $p \to \infty$. The discrepancy between the finite-dimensional and the infinite dimensional norms for $||x-x\true||$ is of order $[\max(p, \nu^{-1/m})]^{-\beta} =  \min(p^{-\beta}, \nu^{\beta/m})$, i.e.
$$
||\f\true- \hat{\f}||_2 \leq 2\left[\sum_{i=1}^{p} (x_i - [x\true]_i)^2\right]^{1/2} + C \min(p^{-\beta}, \nu^{\beta/m}).
$$
Let $\mu_{\rm post}^*(\omega)$ be the posterior distribution of the unknown function $\f\true$ that is reconstructed from the posterior distribution of its first $p$ coefficients $x_1,\ldots, x_p$ using construction \eqref{eq:DefHatf}, and consider the Ky Fan distance in the metric space $L^2$ with metric $d(\f, \g) = ||\f-\g||_2$. Then, using the triangle inequality,
$$
\rho_{\rm K}(\mu_{\rm post}^*, \delta_{\f\true}) \leq \rho_{\rm K}(\mu_{\rm post}, \delta_{x\true})+ C \min(p^{-\beta}, \nu^{\beta/m}).
$$

The rate of convergence of the posterior distribution under the considered model is summarised in the lemma.
\begin{lemma}\label{lem:SpectralRate}
Assume that $||M_{f,1}^{-1} M_{f,2} M_{f,1}^{-1}||\leq C $ (i.e. that $s_2\leq 2s$),  $m = 2\alpha-s+2\kappa +1 >0$, $\kappa> 0$,   
 and  $ \Delta_0 \leq C \left(\tau\log (1/\tau)\right)^{1/2}$.

 Then, for a Gaussian prior $x\sim \N_p(0, \gamma^2 B^{-1})$, $\nu = \tau/\gamma^2$, and for small enough $\tau$,
\begin{eqnarray}\label{eq:BoundRhoKFExp}
\rho_{\rm K}(\mu_{\rm post}^*, \delta_{\f\true}) &\leq&  C [\max(p, \nu^{-1/m})]^{-\beta} \\
&+& C  \nu \min(\nu^{-1/m},p)^{(m -\beta)_+} [ \log p]^{ I(\beta=m)/2}\notag\\
&+& C \tau^{1/2} \min(\nu^{-1/m},p)^{( \alpha-s/2 +1/2)_+}[ \log (p/\tau) ]^{(1+I(s=2\alpha+1))/2}.\notag
\end{eqnarray}

\end{lemma}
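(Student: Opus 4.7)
The plan is to specialise Theorem~\ref{th:KyFanPostInt} to the diagonal spectral setting and then add the $L^2$ reconstruction error. Under the Gaussian prior with precision $B=\diag(b_j^2)$, identity link $G$, and diagonal $A=\diag(a_j)$, the matrix $V_{y\exact}(x\true)$ is diagonal (since the responses are independent exponential-family), so the Hessian
\[
H_\nu \;=\; A^T V_{y\exact}(x\true) A + \nu B
\]
is itself diagonal with $H_{\nu,jj}\asymp j^{-2\alpha+s}+\nu j^{2\kappa+1}$. The two contributions balance at the effective cutoff $j^\star\asymp\nu^{-1/m}$, $m=2\alpha-s+2\kappa+1$, so that $H_{\nu,jj}^{-1}\asymp j^{2\alpha-s}$ for $j\le j^\star$ and $H_{\nu,jj}^{-1}\asymp\nu^{-1}j^{-2\kappa-1}$ for $j>j^\star$. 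All subsequent sums will be evaluated by splitting at $\min(j^\star,p)$.

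I would first evaluate the variance term $\sqrt{-\tau\,\trace(H_\nu^{-1})\log(\tau\,\trace(H_\nu^{-1}))}$. Because $\kappa>0$, the tail sum from $j>j^\star$ converges, leaving
\[
\trace(H_\nu^{-1}) \asymp \min(j^\star,p)^{(2\alpha-s+1)_+}\bigl[\log\min(j^\star,p)\bigr]^{I(s=2\alpha+1)},
\]
which, after multiplication by $\tau$ and a square root, reproduces the third term of the lemma: the $(2\alpha-s+1)_+/2=(\alpha-s/2+1/2)_+$ exponent comes from the trace, the $\log(p/\tau)^{1/2}$ factor from the Ky Fan log, and the extra log power in the boundary case $s=2\alpha+1$ from the diverging trace sum. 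Next, for the prior bias, $|\nabla_j g(x^\star)|\le C j^{2\kappa+1-\beta-1/2}$ gives
\[
\|H_\nu^{-1}\nabla g(x^\star)\|^2 \asymp \sum_{j\le\min(j^\star,p)} j^{2(m-\beta)-1} \;+\; \nu^{-2}\!\!\!\sum_{\min(j^\star,p)<j\le p}\!\!\! j^{-2\beta-1};
\]
the first sum contributes $\min(j^\star,p)^{2(m-\beta)_+}[\log p]^{I(m=\beta)}$, and after the $\nu$ prefactor and square root produces the second term of the lemma, the boundary case $m=\beta$ supplying the $[\log p]^{1/2}$ factor; the second sum (only active when $j^\star<p$) contributes at most $\nu^{-1}(j^\star)^{-\beta}$, whose $\nu$ prefactor yields a term of the same order as the truncation bias below.

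The remaining pieces are straightforward. The random bias $\bar{c}_1\rho_{\rm K}(Y,y\exact)$ is controlled using $\|M_{f,1}^{-1}M_{f,2}M_{f,1}^{-1}\|\le C$ to force $\bar{c}_1=O(1)$, and Proposition~\ref{th:KyFanData} applied to the exponential family, giving $\rho_{\rm K}(Y,y\exact)\le C\sqrt{-\tau M\log(\tau M)}$ with $M\asymp\sum_j|b'(y\exact_j)|^{-1}$; a direct comparison shows this is dominated by the variance term in all regimes. The mass-outside-the-ball term $\Delta_0/(1+\Delta_0)$ is absorbed by the hypothesis $\Delta_0\le C(\tau\log(1/\tau))^{1/2}$. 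Finally, to pass from the posterior of $(x_1,\ldots,x_p)$ to that of $\hat\f$, one adds the $L^2$ reconstruction bias: coefficients with $j>p$ are missing (cost $\asymp p^{-\beta}$) and coefficients with $j\in(j^\star,p)$ are essentially shrunk to zero by the prior (cost $\asymp(j^\star)^{-\beta}$), so together they give the first term $C[\max(p,\nu^{-1/m})]^{-\beta}$, which is then combined with the three Ky Fan contributions via the triangle inequality as in Section~\ref{sec:pgrows}.

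The main obstacle is the bookkeeping in the boundary cases $m=\beta$ and $s=2\alpha+1$, where the naive geometric sums pick up the stated extra log factors, and the verification that the auxiliary conditions of Theorem~\ref{th:KyFanPostInt} (existence of a feasible $\delta$, $\tau\trace(H_\nu^{-1})\to 0$, smallness of $\nu$ and $\tau/\nu$, and $\Delta_{\star,K}(\delta)=o(1)$) remain compatible with the spectral scaling $j^\star\asymp\nu^{-1/m}$ under the hypothesised relationship between $p$, $\nu$, and $\tau$.
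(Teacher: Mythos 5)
Your treatment of the variance term, the prior-bias term (both via the split at $j^\star\asymp\nu^{-1/m}$, which is what Lemma~\ref{lem:Knapik} formalises), the $\Delta_0$ term and the $L^2$ truncation error all track the paper's proof. The genuine gap is in the random-bias term. You take $\bar c_1=O(1)$ and pair it with the Ky Fan distance of the \emph{raw} data, $\rho_{\rm K}(Y,y\exact)\le C\sqrt{-\tau M\log(\tau M)}$ with $M\asymp\sum_j|b'({y\exact}_j)|^{-1}\asymp\sum_j j^{-s}$, and assert that this is dominated by the variance term in all regimes. That assertion fails: for $s\le 1$ the sum $\sum_{j\le p}j^{-s}$ grows like $p^{1-s}$, so $\rho_{\rm K}(Y,y\exact)\asymp\sqrt{\tau\,p^{1-s}\log(\cdot)}$, while the variance term is $\tau^{1/2}\min(\nu^{-1/m},p)^{(\alpha-s/2+1/2)_+}$ up to logs; once $p\gg\nu^{-(2\alpha-s+1)_+/m}$ (e.g.\ $s=0$, $p=\nu^{-1}$, which is compatible with $\nu\ge p^{-m}$ since $(2\alpha+1)/m<1$ when $\kappa>0$) the raw-data distance strictly dominates and the claimed bound \eqref{eq:BoundRhoKFExp} would not follow. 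The same problem afflicts the term $2\rho_{\rm K}(Y,y\exact)$ coming from the lifting theorem. There is also an internal inconsistency: the hypothesis $\|M_{f,1}^{-1}M_{f,2}M_{f,1}^{-1}\|\le C$ gives $\bar c_1=O(1)$ only in the modified version of Theorem~\ref{th:KyFanPostInt} (end of its proof), and in that version the data distance must simultaneously be replaced by that of the transformed data, not kept as $\rho_{\rm K}(Y,y\exact)$.

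The paper's proof resolves this by preconditioning: it works with $\tilde Y=H_\nu^{-1}A^TM_{f,1}Y$ and $\tilde y\exact=H_\nu^{-1}A^TM_{f,1}y\exact$, whose independent components have $\Var(\tilde Y_i)\asymp\tau\,i^{-2\alpha+s}/[i^{-2\alpha+s}+\nu i^{2\kappa+1}]^2$. Summing these via Lemma~\ref{lem:Knapik} gives $M\le C\min(\nu^{-1/m},p)^{(2\alpha-s+1)_+}[\log p]^{I(s=2\alpha+1)}$ --- the same order as $\trace(H_\nu^{-1})$ --- so Proposition~\ref{th:KyFanData} yields $\rho_{\rm K}(\tilde Y,\tilde y\exact)\le C\tau^{1/2}\min(\nu^{-1/m},p)^{(\alpha-s/2+1/2)_+}[\log(p/\tau)]^{(1+I(s=2\alpha+1))/2}$, and the random bias genuinely coincides in order with the variance term rather than merely being ``dominated'' by it. To repair your argument you would need to insert this computation of $\Var(\tilde Y_i)$ and the corresponding application of Lemma~\ref{lem:Knapik} in place of the raw-data comparison.
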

In the assumption $ \Delta_0 \leq C \left(\tau\log (1/\tau)\right)^{1/2}$, the upper   bound can be replaced with the upper bound on $\rho_{\rm K}(\mu_{\rm post}^*, \delta_{\f\true})$ stated in the lemma.

Using a different prior distribution may result in a different upper bound on the nonrandom bias (second term).

Taking $\nu^{-1/m} \leq p$, i.e. $\nu\geq p^{-m}$, the rate becomes
\begin{eqnarray*}
\rho_{\rm K}(\mu_{\rm post}^* , \delta_{\f\true}) &\leq&  C p^{-\beta}+ C  \nu^{\min(1,\beta /m)} [ \log p]^{ I(\beta=m)/2}\\
&+& C \tau^{1/2}  \nu^{-( \alpha-s/2 +1/2)_+/m}[ \log p +\log (1/\tau) ]^{(1+I(s=2\alpha+1))/2}.
\end{eqnarray*}


Now we consider  different cases.
\begin{enumerate}

\item If $s>2\alpha+1$,  the rate of convergence is $\min[p^{-\min(\beta,m)}, \tau^{1/2}]$, up to the log factor. In this case, due to large $s$, the problem becomes well-posed. We would need to use at least $p\geq \tau^{-1/[2\min(\beta,m)]}$ eigenvalues to achieve it. The prior distribution can be chosen to be  non-informative, with the prior density proportional to a constant, by setting $\nu =0$.

\item If $ \alpha-s/2 +1/2 >0$, the upper bound is minimised at $\nu = \left( \tau [ \log (1/\tau)]^{1+ \varkappa}  \right)^{m/[2\min(\beta , m)+2\alpha+ 1-s]} $, and the smallest upper bound is
\begin{eqnarray*}
\rho_{\rm K}(\mu_{\rm post}^*, \delta_{\f\true}) &\leqslant& C\left( \tau \,  \log (1/\tau)]^{1+\varkappa}  \right)^{\frac{\min(\beta, m)}{2\min(\beta , m)+ 2\alpha+ 1-s }} [ \log (1/\tau)]^{0.5 I(\beta = m)},
\end{eqnarray*}
where $\varkappa = I(\alpha =(1-s)/2)-  I(\beta = m)$. Condition $\nu \geq p^{-m}$ is satisfied if $p   \geq \tau^{-1/[2\min(\beta , m)+2\alpha+ 1-s]}$.

\begin{enumerate}

\item If $ \kappa > \beta/2 -\alpha+s/2 -1/2$ (i.e. $\beta <m$), then the smallest upper bound is
 $$
\rho_{\rm K}(\mu_{\rm post}^*, \delta_{\f\true}) \leqslant  C\left( \tau [\log (1/\tau)]^{1+\varkappa} \right)^{\frac{\beta}{2\beta+(2\alpha+ 1-s)_+}}.
 $$

If $s>0$, then the rate of convergence can be faster than the minimax rate for the Gaussian errors. Therefore, for the exponential families, there is an effect of self-regularisation  if $2\alpha +1\leq s$ when the rate becomes parametric.


For the rescaled Poisson distribution, $s=\alpha+\beta+1/2$, and the rate of convergence (up to a log factor) is
 $$
\rho_{\rm K}(\mu_{\rm post}^*, \delta_{\f\true}) \leqslant  C\left( \tau [\log (1/\tau)]^{1+\varkappa}  \right)^{\frac{\beta}{2\beta+(\alpha+ 1/2-\beta)_+}},
 $$
which is $C\left( \tau \log (1/\tau) \right)^{\frac{\beta}{\beta+\alpha+ 1/2 }}$ if $\alpha+ 1/2>\beta$, and it is
$C\left( \tau \log (1/\tau)  \right)^{1/2}$ if $\alpha+ 1/2 < \beta$.

\item If  $s=0$ (Gaussian case), then condition $ \alpha-s/2 +1/2 >0$ is satisfied, where the upper bound is minimised at $\nu = \left( \tau [\log (1/\tau)]^{1+ \varkappa}  \right)^{m/[2\min(\beta , m)+2\alpha+ 1 ]}  $, and the bound is
\begin{eqnarray*}
\rho_{\rm K}(\mu_{\rm post}^*, \delta_{\f\true}) &\leqslant& C\left( \tau [\log (1/\tau)]^{1+\varkappa}  \right)^{\frac{\min(\beta, m)}{2\min(\beta , m)+ 2\alpha+ 1 }} [\log (1/\tau)]^{0.5 I(\beta = m)},
\end{eqnarray*}
where $m = 2\alpha + 2\kappa+1$. Thus, we recover the rate of \textcite{vdVaart2011}. The minimax rate is attained if $2\alpha + 2\kappa+1 >\beta$, otherwise the rate is slower.

\item If 
$\nu \asymp \tau $, i.e. the scale of the prior $\gamma$ is constant, condition $\nu \geq p^{-m}$ is satisfied if  $p\geq \tau^{-1/m}$, and
\begin{eqnarray*}
\rho_{\rm K}(\mu_{\rm post}^*, \delta_{\f\true}) &\leq&  C p^{-\beta}+ C  \tau^{\min(1,\beta /m)} [ \log p]^{ I(\beta=m)/2}\\
&+& C \tau^{ -( \alpha-s/2 +1/2 -m/2)/m}[\log p +\log (1/\tau) ]^{1/2}\\
&\leq& C  \tau^{\min(\beta, \kappa)/(2\alpha -s  + 2\kappa+1)}[\log (1/\tau) ]^{1/2}
\end{eqnarray*}
which is the smallest for $\kappa = \beta$, with the rate
\begin{eqnarray*}
\rho_{\rm K}(\mu_{\rm post}^*, \delta_{\f\true}) &\leq& C  \tau^{ \beta/(2\alpha -s  + 2\beta+1)}[\log (1/\tau) ]^{1/2}
\end{eqnarray*}
which is minimax (up to log factor) for the Gaussian distribution, i.e. if $s=0$.

\item $\beta > m$, i.e $2\alpha+1 > s >2\alpha+2\kappa+1-\beta$ (possible if $2\kappa <\beta$):
\begin{eqnarray*}
\rho_{\rm K}(\mu_{\rm post}^*, \delta_{\f\true}) &\leqslant& C\left( \tau [\log (1/\tau)]^{1+\varkappa}  \right)^{\frac{m}{2m+ 2\alpha+ 1-s}}\\
&\leqslant& C\left( \tau [\log (1/\tau)]^{1+\varkappa}  \right)^{\frac{2\alpha-s+1+2\kappa}{2(2\alpha-s+1) + 2\kappa}}.
\end{eqnarray*}

\end{enumerate}



\end{enumerate}

\begin{example} Consider the case where $y_i$ have the rescaled Poisson  distribution with identity link function: $Y_i/\tau \sim Pois([Ax]_i/\tau)$, $i=1,\ldots,n$. In this case,  $s=\alpha +\beta+1/2 $. Taking $\kappa \geq \beta$, the rate of convergence is
$$
\rho_{\rm K}(\mu_{\rm post}^*, \delta_{\f\true}) \leqslant  C  \tau^{1/2} [\log (1/\tau)]^{1/2+I(\beta = 1/2+\alpha)/2} ,
$$
for $\beta \geq 1/2+\alpha $, i.e. the rate of convergence is parametric; for $\beta < 1/2+\alpha$,
$$
\rho_{\rm K}(\mu_{\rm post}^*, \delta_{\f\true}) \leqslant  C\left( \tau \log (1/\tau) \right)^{\frac{\beta}{\beta+ \alpha +1/2}},
$$
with $\nu = \left( \tau \log (1/\tau)   \right)^{1 +2(\kappa -\beta)/[\beta+\alpha+1/2]}$. These rates are faster than the corresponding minimax rate of convergence under Gaussian errors, $\tau^{\frac{\beta}{2(\beta+\alpha+1/2)}}$ (up to a log factor).

\end{example}

 Theorem~\ref{th:KyFanPostInt} can also be applied for functions $\f$, $\g$ defined on $\mathbb{R}^d$; for $d=2$, the framework considered in \textcite{JSpet} can be applied.

In the considered framework of non-Gaussian errors, we can see two interesting phenomena. The first one is that the rate of convergence in the considered setting can be faster than the rate of convergence under the Gaussian noise. This is the implication for direct as well as for indirect (inverse) problems. This is due to dependence of the variance of the noise on the unknown function. The rate is similar to that of a mildly ill-posed operator with ill-posedness  $\alpha-s/2$ (provided $\alpha >s/2)$ if the variance of the noise increases as $j^s$. The second phenomenon is that it appears to be possible to recover the unknown function with the parametric rate of convergence, for $s$ large enough compared to $\alpha$. A simple non-Bayesian estimator that achieves parametric rate, with $\hat{x}_i = Y_i/a_i$, was discussed in the introduction.

\section{Convergence rate when $x^\star$ is on the boundary}\label{sec:boundary}

In this section we consider a special case where the assumption that $x^\star$ is an interior point of \(\X\) does not hold.  This is an example of so called nonregular models that have been considered mostly for a one-dimensional nonregular parameter (see, for instance, \textcite{GGS94} and \textcite{GS95}), and, as far as we are aware, have not been considered in the context of inverse problems. As we shall see, the rate of convergence is different in this case. We shall see that for some probability distributions, it makes it possible to observe exact data under the considered probabilistic model (Section~\ref{sec:SectionSetup}).

In this section we assume that the parameter space is $\X = [0, \infty)^p$, and that each coordinate of $x^\star$ is on the boundary of $\X = [0,\infty)^p$, i.e. $x^\star =0$. This is an important benchmark case where there is no signal. Such setup arises, for example, in image analysis, where $x$ is the vector of the unknown intensities, and we want to test whether there is any image present. We could assume that parameter $x$ is restricted to an arbitrary convex polyhedron; this could be reduced to $[0, \infty)^p$ by a linear change of variables.

\subsection{Assumptions}\label{sec:AssumeExt}

We make the same assumptions on the prior distribution as in Section~\ref{sec:QuadraticApprox}, however, we only need the smoothness and the convergence assumptions for up to the second derivative only, rather than up to the third. Assumptions on $\delta$ -- the radius of approximation -- are also changed.

{\bf Smoothness in $x$}.

There exists $\delta >0$ such that there exist bounded second order derivatives $\exists f_y'', \, \exists g''$ on $B(x^\star, \delta)$ for all $y\in \Y_{\rm loc}$, i.e.
$\exists   C_{\tilde{f},\,2}, \, C_{g,\,2}<\infty$ such that for all $x \in B(x^\star, \delta)$,  for all $y\in \Y_{\rm loc}$,
\begin{eqnarray}\label{eq:BoundedDerG2}
\max_{1 \leqslant i,j \leqslant n}|\nabla_{ij} \tilde{f}_y(x)| \leqslant C_{\tilde{f},\, 2}, \quad \max_{1 \leqslant i,j \leqslant p}|\nabla_{ij} g(x)|  \leqslant  C_{g,\, 2}.
\end{eqnarray}

{\bf Convergence in $Y$.}


 $\exists  M_{\tilde{f},\,1} <\infty$ such that for 
all $1 \leqslant j  \leqslant p$   and for all $y\in \Y_{\rm loc}$,
\begin{eqnarray}\label{eq:BoundedDerDiffF22}
|\nabla_{j } \tilde{f}_y(A x^\star) - \nabla_{j } \tilde{f}_{y\exact}(A x^\star)| \leqslant M_{\tilde{f},\, 1} ||y - y\exact||.
\end{eqnarray}

{\bf Assumptions on $\delta$}.

Assume that $\delta >0$ satisfies the following conditions  as $\tau \to 0$:
\begin{enumerate}
\item
\begin{gather}\label{eq:CondDeltaLocal2}
\notag \delta \to 0,   \quad \frac{\delta }{\tau} \to 0,  \\
 \frac{  \delta}{\gamma^2} \to \infty \quad \text{(not necessary if $A^T A$ is of full rank)}.
\end{gather}

\item  With high probability,
\begin{eqnarray}\label{eq:LareDevIntegral2}
\Delta_0(B(0,\delta))  \to 0 \quad \text{as} \quad \tau \to 0,
\end{eqnarray}
 where $\Delta_0(B(0,\delta))$ is defined by (\ref{eq:DefD0}).

\end{enumerate}

\subsection{Rate of convergence in Ky Fan distance}


Define
$$
b(\omega) = \nabla h_{Y(\omega)}(x^\star).
$$

For a given data set (i.e. fixed $\omega$), the Prokhorov distance between the posterior distribution and the point mass at $x^\star$ can be bounded as follows. 
\begin{theorem}\label{th:ProkhUpperExt}
 Suppose we have the Bayesian model defined in Section \ref{sec:Setup}, and let the assumptions on $f_y$, $g$ and $\delta$ stated
in Section~\ref{sec:AssumeExt} hold.

Assume that $x^\star =0$ and that $b_i(\omega)>0$ for all $i$, and denote $b_{\min}(\omega) = \min_i b_i(\omega)$.

Then,  $\exists \tau_0  >0$ such that for $\forall \,
\tau \in (0, \tau_0]$ and small enough $\gamma$,
\begin{eqnarray*}
\rho_{\rm P}(\mu_{\rm post}(\omega), \delta_{x^\star})\leq
 \max\left\{   \frac{\Delta_0}{1+\Delta_0 }, \quad
- \frac{\tau \sqrt{p}}{\bar{b}_{\min}(\omega) }  \log\left( \frac{\tau}{ \sqrt{p}\, \bar{b}_{\min}(\omega)}\right)  (1+ \Delta_{4})\right\},
\end{eqnarray*}
where $\Delta_0=\Delta_0(B(0,\delta))$ is defined by (\ref{eq:DefD0}) and   $\Delta_4(\delta, Y(\omega))$ is defined by (\ref{eq:DefDeltaStar}).
\end{theorem}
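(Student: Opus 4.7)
The overall plan is to split the posterior mass into its contribution on the local ball $B(x^\star,\delta)\cap\X = B(0,\delta)\cap[0,\infty)^p$ and its complementary tail, and to bound each piece separately. By the very definition \eqref{eq:DefD0}, the fraction of mass lying outside the local ball is exactly $\Delta_0/(1+\Delta_0)$; this is one of the two terms appearing in the max. The remaining task is to estimate the Prokhorov distance between the local posterior and $\delta_{x^\star}$, using the assumptions \eqref{eq:BoundedDerG2}, \eqref{eq:BoundedDerDiffF22}, the smallness of $\delta$, and the boundary condition $x^\star=0$, $b_i(\omega)>0$.

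On $B(0,\delta)$ I would Taylor expand the negative log posterior density around $0$:
\begin{equation*}
h_y(x)-h_y(0) = b(\omega)^T x + \tfrac{1}{2}\, x^T\! \nabla^2 h_y(\xi)\, x,\qquad \xi\in B(0,\delta).
\end{equation*}
By \eqref{eq:BoundedDerG2} the Hessian is uniformly bounded on $B(0,\delta)$ for $y\in\Y_{\rm loc}$ by a matrix whose norm depends only on $\|A\|$, $C_{\tilde f,2}$, $\nu$, $C_{g,2}$, so the quadratic remainder is at most of order $\delta^2$. Because each coordinate lies in $[0,\delta]$ and $\delta\to 0$, the linear term dominates and we get, uniformly on $B(0,\delta)\cap[0,\infty)^p$,
\begin{equation*}
e^{-[h_y(x)-h_y(0)]/\tau} \;=\; e^{-b(\omega)^T x/\tau}\bigl(1+R_\delta(x)\bigr),\qquad |R_\delta(x)|\leq C\,\delta^2/\tau.
\end{equation*}
Since $b_i(\omega)>0$ for every $i$, the leading factor $e^{-b(\omega)^T x/\tau}$ is the (unnormalised) density of a product of independent exponentials $\Exp(b_i(\omega)/\tau)$ on $[0,\infty)^p$. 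The condition that the posterior concentrates inside the ball (together with the fact that these exponentials have mean $\tau/b_i(\omega)$ which is of smaller order than $\delta$) means the truncation to $B(0,\delta)$ only perturbs the normalising constant by a factor $1+o(1)$. The $O(\delta)$ perturbation coming from the quadratic remainder is absorbed into the modified rate $\bar b_{\min}(\omega) = b_{\min}(\omega)(1+O(\delta))$.

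To extract the Prokhorov distance from $\delta_0$ one needs the smallest $\varepsilon$ with $\mu_{\rm post}(\{\|x\|>\varepsilon\})\leq \varepsilon$. Using the local exponential approximation and a union bound, for $X\sim \mu_{\rm post}$ restricted to the ball,
\begin{equation*}
\PP(\|X\|>\varepsilon)\;\leq\; \sum_{i=1}^p \PP\!\bigl(X_i>\varepsilon/\sqrt p\bigr)\;\leq\; p\,\exp\!\Bigl(-\tfrac{\bar b_{\min}(\omega)\,\varepsilon}{\sqrt p\,\tau}\Bigr)(1+o(1)).
\end{equation*}
Setting the right-hand side equal to $\varepsilon$ and solving yields
\begin{equation*}
\varepsilon \;\asymp\; -\,\frac{\tau\sqrt p}{\bar b_{\min}(\omega)}\,\log\!\Bigl(\frac{\tau}{\sqrt p\,\bar b_{\min}(\omega)}\Bigr),
\end{equation*}
which is precisely the non-$\Delta_0$ term in the claimed bound. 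Combining this with the tail contribution $\Delta_0/(1+\Delta_0)$ via the splitting completes the argument.

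The main obstacle, and where I would need to be careful, is the bookkeeping that collects the various $o(1)$ perturbations—the quadratic remainder $R_\delta$, the difference between the truncated and untruncated exponential normalising constants, and the effect of $\nabla\tilde f_y(Ax^\star)-\nabla\tilde f_{y\exact}(Ax^\star)$ on $b(\omega)$ via \eqref{eq:BoundedDerDiffF22}—into the single factor $1+\Delta_4$ defined in \eqref{eq:DefDeltaStar}. Verifying that each of these is genuinely $o(1)$ under the assumptions \eqref{eq:CondDeltaLocal2} (in particular that $\delta$ is large enough relative to the concentration scale $\tau\sqrt p/\bar b_{\min}$ while $\delta^2/\tau\to 0$ keeps the Taylor remainder negligible) is the only non-routine part; the rest is the same kind of Laplace/large-deviations argument used in the interior case of Theorem~\ref{th:ProkhUpperInt}, but with the gradient-driven (exponential) rather than Hessian-driven (Gaussian) local shape.
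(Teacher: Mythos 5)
Your proposal follows essentially the same route as the paper: split off the tail mass $\Delta_0/(1+\Delta_0)$, approximate $h_y(x)-h_y(0)$ on the ball by the linear form $b(\omega)^T x$ so the local posterior is sandwiched by products of exponentials, apply the coordinatewise union bound to get $p\exp(-\bar b_{\min}\varepsilon/(\sqrt p\,\tau))$, and solve the fixed-point equation for $\varepsilon$. The only cosmetic difference is that the paper bounds the quadratic remainder by the linear term $\delta_b^T(x-x^\star)$ with $\delta_b=O(\delta)$ (perturbing the rate to $b\pm\delta_b$) rather than by a uniform multiplicative error $C\delta^2/\tau$, but you also state the rate-perturbation version, so the argument is the same.
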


Recall that $b(\omega) = A^T \nabla \tilde{f}_{Y(\omega)}(x^\star) + \nu \nabla g(x^\star)$. Thus, if the image of $A^T$ includes the whole set $\X$ (well-posed case), the leading term  of $b(\omega)$ for each coordinate is a constant, then the rate of convergence is determined by $-\tau \log \tau$. However, if $\rank(A)<p$ (ill-posed case), then for some coordinates the leading term of $b(\omega)$ is $\nu \, const\to 0$, then the rate of convergence is determined by $-\gamma^2 \log \gamma$.

To have consistency in the ill-posed case, we must have $\tau/\nu = \gamma^2 \to 0$. Hence, in this case to have the convergence we must assume that $\nu = \tau/\gamma^2\to 0$ and $\gamma \to 0$ as $\tau \to 0$.


Now we apply Theorems~\ref{lem:KyFanDiffBounds} and \ref{th:ProkhUpperExt} to obtain an upper bound on the Ky Fan distance.
Define
$$
b^\star = \nabla h_{y\exact}(x^\star).
$$

\begin{theorem}\label{th:KyFanPostExt}
Consider the Bayesian model defined in Section
\ref{sec:Setup}, and suppose that the assumptions on $f_y$ and $g$  stated in Section~\ref{sec:AssumeExt} hold.

Assume that $\X = [0, \infty)^p$, $x^\star =0$, $\nabla_{i} \tilde{f}_{y\exact}(G(y\exact)) > 0$ and $b^\star_i>0$ for all $i$. Denote $b_{\min}^\star = \min_i b^\star_i$.
If \(\rank \,(A^T A) < p\), assume also that \(\gamma \to 0\) and \(\tau/\gamma^2\to 0\) as \(\tau \to 0\).


Then, for small enough $\tau$, $\gamma$ and \(\nu\),
\begin{eqnarray*}
\rho_{\rm K}(\mu_{\rm post}, \delta_{x^\star}) &\leq& \max\left\{2\rho_{\rm K}(Y, y\exact),\,\, \Delta_0^\star, \,\, - \frac{\tau \sqrt{p}}{ b_{\min}^\star } \log\left( \frac{\tau  }{  \sqrt{p} \, b_{\min}^\star }\right)(1 + \Delta_5^\star) \right\},
 \end{eqnarray*}
where $\Delta_0^\star$ is defined by (\ref{eq:D0expr}), and
\begin{eqnarray*}
\Delta_5^\star &=& -1+\frac{ 1+\Delta_4^\star }{1 - \Delta_{11}} \left(1 -    \frac{ \log (1 - \Delta_{11}) }{\log\left( \frac{\tau }{  \sqrt{p}\,  b_{\min}^\star   }\right)} \right),\\
\Delta_{11} &=& \frac{ M_{f1}}{b_{\min}^\star} \rho_{\rm K}(Y, y\exact) + \delta \frac{p [C_{f2}  +\nu C_{g2}/2]}{b_{\min}^\star},\\
\Delta_4^\star &=& \frac{\log\left( (1+\Delta_1^\star)/(1+ \Delta_0^\star)\right)}{\log\left(  \sqrt{p} \,b^\star_{\min}[1 - \Delta_{11}] /\tau \right)},\\
 \Delta_1^\star &=& -1 +  \left(\frac{ 1 - \Delta_{11}}{1 + \Delta_{11}}\right)^p \,  \left[   1- e^{-  \max_i b_{i}^\star(1+\Delta_{11}) \delta/(\sqrt{p}\tau)}  \right]^{p}.
\end{eqnarray*}

Under the assumptions on $\tau$, $\gamma$ and $\delta$ given in Section~\ref{sec:AssumeExt},   $\Delta_{5}^\star(\delta)=o(1)$ as $\tau \to 0$.
\end{theorem}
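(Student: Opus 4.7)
The plan is to combine the fixed-$\omega$ Prokhorov bound of Theorem~\ref{th:ProkhUpperExt} with the lifting result of Theorem~\ref{lem:KyFanDiffBounds}, using the convergence assumption in $Y$ and smoothness in $x$ to replace the data-dependent quantity $b_{\min}(\omega)$ by its deterministic limit $b_{\min}^\star$. The three data-dependent inputs in Theorem~\ref{th:ProkhUpperExt} are $\Delta_0$, $\Delta_4$ and $b_{\min}(\omega)$; all three must be controlled uniformly on a ``good'' event of high probability.

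First I would split the sample space as $\Omega = \Omega_1 \cup \Omega_2$ with
$\Omega_1 = \{\omega:\, \|Y(\omega) - y\exact\| \leq \rho_{\rm K}(Y, y\exact)\}$, so by definition of the Ky Fan metric $\P(\Omega_2) \leq \rho_{\rm K}(Y, y\exact)$. On $\Omega_1$ I would use \eqref{eq:BoundedDerDiffF22} and a first-order Taylor expansion with the smoothness bound \eqref{eq:BoundedDerG2} to write, coordinatewise,
$$
|b_i(\omega) - b_i^\star| \;\leq\; M_{f,1}\,\rho_{\rm K}(Y, y\exact) + \delta\,p\bigl[C_{\tilde f,2} + \nu\, C_{g,2}/2\bigr],
$$
which, dividing by $b_{\min}^\star$, gives $b_{\min}(\omega) \geq b_{\min}^\star(1 - \Delta_{11})$. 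The assumption $b^\star_i > 0$ for all $i$ together with $\Delta_{11}\to 0$ (under the standing conditions on $\delta, \nu, \rho_{\rm K}(Y, y\exact)$) guarantees that for $\tau$ small enough, on $\Omega_1$ the hypotheses $b_i(\omega) > 0$ of Theorem~\ref{th:ProkhUpperExt} hold, so the theorem is applicable on that event.

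Next I would substitute the lower bound on $b_{\min}(\omega)$ into the Prokhorov bound. The delicate point is the logarithmic factor: using $\log(a/b) = \log a - \log b$,
$$
-\log\!\left(\frac{\tau}{\sqrt p\,b_{\min}(\omega)}\right) \;\leq\; -\log\!\left(\frac{\tau}{\sqrt p\,b_{\min}^\star}\right) \;-\; \log(1 - \Delta_{11}),
$$
while $1/b_{\min}(\omega) \leq 1/[b_{\min}^\star(1 - \Delta_{11})]$. Multiplying these two one-sided bounds together and factoring out $-\log(\tau/(\sqrt p\,b_{\min}^\star))$ produces exactly the factor $(1-\Delta_{11})^{-1}\bigl(1 - \log(1-\Delta_{11})/\log(\tau/(\sqrt p\,b_{\min}^\star))\bigr)$. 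Combining this with the $(1+\Delta_4)$ factor from Theorem~\ref{th:ProkhUpperExt}, and re-expressing $\Delta_4$ in terms of the deterministic $\Delta_1^\star$ and $\Delta_0^\star$ via the same kind of perturbation estimate applied to the ratio \eqref{eq:DefD0}, gives $1 + \Delta_5^\star$ with the stated expression.

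Finally I would invoke the first inequality of Theorem~\ref{lem:KyFanDiffBounds} with $(X_1,X_2) = (\mu_{\rm post}, \delta_{x^\star})$, $(Y_1, Y_2) = (Y, y\exact)$ and $\Phi_1$ equal to the deterministic Prokhorov bound just obtained (which is nondecreasing in $\rho_{\rm K}(Y, y\exact)$ through $\Delta_{11}$). Since $\P(\Omega_2) \leq \rho_{\rm K}(Y, y\exact)$, the lifting lemma yields
$$
\rho_{\rm K}(\mu_{\rm post}, \delta_{x^\star}) \leq \max\bigl\{\,2\rho_{\rm K}(Y, y\exact),\ \Phi_1(\rho_{\rm K}(Y, y\exact))\,\bigr\},
$$
and $\Phi_1$ is precisely the maximum of $\Delta_0^\star/(1+\Delta_0^\star)$ and the second term in the stated bound. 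I expect the main obstacle to be the bookkeeping around the logarithmic factor in step three --- one has to be careful that $1 - \Delta_{11} > 0$ and that $\log(\tau/(\sqrt p\,b_{\min}^\star)) < 0$ so that the perturbation genuinely enters as a multiplicative $(1+\Delta_5^\star)$ rather than creating a sign issue --- together with the need to check that the $o(1)$ conclusion $\Delta_5^\star = o(1)$ follows from the conditions in Section~\ref{sec:AssumeExt} (for which $\delta/\tau \to 0$ is crucial so that $\Delta_{11}\to 0$ faster than the leading $\tau\log\tau$ scale).
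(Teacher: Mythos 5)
Your proposal follows essentially the same route as the paper's proof: the same partition $\Omega=\Omega_1\cup\Omega_2$ with $\P(\Omega_2)\leq\rho_{\rm K}(Y,y\exact)$, the same replacement of $\bar b_{\min}(\omega)$ by $b_{\min}^\star(1-\Delta_{11})$ on $\Omega_1$ using \eqref{eq:BoundedDerDiffF22} and \eqref{eq:BoundedDerG2}, the same use of monotonicity of $-x\log x$ to absorb the perturbation into the factor $(1+\Delta_5^\star)$, and the same application of the first inequality of Theorem~\ref{lem:KyFanDiffBounds}. The argument is correct and matches the paper's proof in all essentials.
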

Hence, in the case that the solution is on the boundary, we have a different rate of convergence of the posterior distribution that is faster than the corresponding rate in the case the solution is an interior point. This fits with other studies of the rate of convergence of the posterior distribution for the error densities with jump \cite{GGS94,GS95}.

{\bf Examples.}
\begin{enumerate}
\item[1.] Rescaled Poisson distribution $Y_t/\tau \sim Pois(A_t x/\tau)$, independent. For $x^\star =0$, we have $\pr(Y_t =0) =1$ for all $t$. The Ky Fan distance between the data and its limit is zero, so we observe exact data.

If $A^T A$ is of full rank, the Ky Fan distance 0 and we recover $x\true$ exactly.
If $A^T A$ is not of full rank, then we can recover $\PA x\true$ exactly, and the upper bound for recovering $\PA x^\star$  is of order $-\gamma^2\log\left(  \gamma^2\right)$ and can be arbitrarily small.
This rate is faster than the rate in the case $x^\star$ is an interior point.

\item[2.] Exponential error distribution: $Y_t - A_t x \sim \Exp(\lambda_t/\tau)$, independent. For $x^\star =0$, we have $Y_t\sim \Exp(\lambda_t/\tau)$. In the well-posed case, the Ky Fan distance between the data and its limit is  $-\Lambda_E \tau \log \tau$, i.e. is of the same order as the rate of contraction of the posterior distribution to  its maximum, where \(\Lambda_E\) is a function of \(\lambda_1, \ldots, \lambda_n\).
In the ill-posed case, the dominating rate is of order $-\gamma^2\log\left(  \gamma^2\right)$ which is faster than the corresponding rate when $x^\star \in int(\X)$.

\end{enumerate}


A comprehensive study of the rate of convergence of inverse problems under a more general setting (when $x^\star$ is an arbitrary point on the boundary) was conducted by \cite{BochkinaGreen}.


\section{Discussion}\label{sec:discussion}

This paper provides a tool to study consistency of the posterior distribution in inverse problems where the likelihood is non-Gaussian and the prior distribution is non-conjugate, without assuming the existence of any moments of the data. It can also be used to apply to infinite dimensional models with other non-standard features. For example, dependence of the coefficients could be modelled by introducing a linear function $G$ that rotates the observed data, i.e. $G(y) = G y$ for a rotation matrix $G$. Non-Gaussian and non-conjugate prior distributions can be considered.  The rate of convergence for severely ill-posed and multivariate inverse problems can also be easily derived from our main result. Asymptotic coverage of Bayesian credible sets under the considered model can be derived from the results of \textcite{BochkinaGreen}.
 
 Another issue is adaptation. The examples given in Section~\ref{sec:pgrows} result in non-adaptive estimators since the dispersion of the prior $\gamma^2$ depends on the smoothness of the unknown function. The number of coefficients $p$ in the model can also be taken as dependent on the data, e.g. via putting a prior distribution on it (e.g. \textcite{Ray_Inv})  which can lead to adaptive estimation, i.e. where the prior distribution does not depend on the  smoothness of the unknown function. Other hierarchical prior distributions can be considered, for instance, where some parameter of the prior distribution, such as scale, is also estimated which can also potentially result in adaptive estimators (see \textcite{vdVaartAdaptive} for hierarchical Bayesian models for nonparametric regression).

In the considered framework of non-Gaussian errors, we can see two interesting phenomena. The first one is that the rate of convergence in the considered setting can be faster than the rate of convergence under the Gaussian noise. This is due to dependence of the variance of the noise on the unknown function. The rate is similar to that of an operator with ill-posedness coefficient $\alpha-s/2$ (provided $\alpha >s/2)$ where the variance of the noise increases as $j^s$. For direct problems, e.g. estimating the intensity of the Poisson process, this phenomenon was studied by \textcite{Bouret}.  The second phenomenon is that it appears to be possible to recover the unknown function with the parametric rate of convergence, for $s$ large enough compared to $\alpha$. This is the effect of self-regularisation, since the information about the unknown function comes not only from the mean, as under homoscedastic Gaussian errors, but also from the variance.

These results raise a number of questions. An important question is about the minimax rate of convergence for such problems, particularly as it can be faster than the minimax rate of convergence for recovering functions under the Gaussian noise. Another interesting question is to identify  random processes whose orthogonal decomposition has coefficients with a distribution from the exponential family. Effect on the posterior inference of possible model misspecification under growing dimensions is another interesting topic of investigation.

\section*{Acknowledgements}

This work has arisen from ongoing joint research with Peter Green (University of Bristol, UK) on ill-posed inverse problems with a non-regular likelihood in a more general setting where some coordinates of $x^\star$ are on the boundary. The author gratefully acknowledges financial support for research visits provided by the EPSRC-funded SuSTaIn programme at Bristol University.



\newpage
\appendix

\section{Appendix: proofs}

We will use the following notation throughout the proofs:
\begin{eqnarray}
 H &=& A^T V_{y}(x^\star) A +\nu B(x^\star), \label{def:H}\\
 H_{\nu} &=& A^T V_{y\exact}(x^\star) A +\nu B(x^\star), \label{def:Hnu}\\
 x_0 &=& H^{-1} \nabla h_y(x^\star), \label{def:x0}\\
\Phix(B, \Omega) &=&  \frac{[\det(\Omega)]^{1/2}}{(2\pi)^{p/2}} \int_B e^{ - (x - \tau^{-1/2}\Omega^{-1} H x_0 )^T \Omega (x -\tau^{-1/2}\Omega^{-1} H x_0)/2} dx, \label{def:Phix0}\\
\Phi(B) &=&  \frac{1}{(2\pi)^{p/2}} \int_B e^{ - ||x||^2/2} dx, \label{def:Phi}
\end{eqnarray}
where $B \subset \mathbb{R}^p$ and $\Omega$ is a positive definite $p\times p$ matrix. In particular, a simple geometric argument implies
$$
\Phix(B(0, R), \Omega) \geq \Phi(B(0,r))
$$
where $r = \sqrt{\lambda_{\min}(\Omega)}(R -   \tau^{-1/2}||\Omega^{-1} H x_0||)$ and $B(z, R) = \{x: \, ||x- z|| \leq R\}$.




\subsection{Proofs of the results in Section \ref{sec:ConvergenceKyFan}}



\begin{lemma}\label{lem:Approx}
Assume that $H$ defined by \eqref{def:H} is invertible, and that $x^\star$ is an interior point of $\X$.

Let $x\in B_{\delta} = \{ x\in \X: \,||x-x^\star|| \leq \delta \}$, and denote $v = (x-x^\star)/\sqrt{\tau}$.

{\bf 1. Upper bound.} Then, for small enough $\delta$ and $\nu$, we have the following upper bound:
\begin{eqnarray*}
 [h_y(x) - h_y(x^\star)]/\tau &\leq& \frac 1 2 ||\widetilde{H}^{1/2}(v - \widetilde{H}^{-1} H x_{0}/\sqrt{\tau}) ||^2  -\frac 1 {2\tau} x_0^T H \widetilde{H}^{-1} H x_0,
\end{eqnarray*}
where $D = A^T C_{f} A \, ||A|| + \nu C_{g}$ and $\widetilde{H}  = H  + \delta D$.

{\bf 2. Lower bound.}
For small enough $\delta$ and $\nu$, we have the following lower bound:
\begin{eqnarray*}
 [h_y(x) - h_y(x^\star)]/\tau &\geq& \frac 1 2  ||\bar{H}^{1/2}(v - \bar{H}^{-1} H x_{0}/\sqrt{\tau}) ||^2 -   \frac 1 {2\tau} x_0^T H \bar{H}^{-1} H x_0,
\end{eqnarray*}
where $\bar{H} = H -  \delta D$.

\end{lemma}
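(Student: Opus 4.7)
The plan is to derive the two inequalities by a quadratic Taylor expansion of $h_y$ about $x^\star$, combined with the uniform Hessian bounds supplied by the smoothness-in-$x$ assumption \eqref{eq:BoundedDerG}, and to finish with a routine completion of the square.

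First I would use that $x^\star$ is an interior point of $\X$ and that $B(x^\star,\delta)$ is convex, so for every $x\in B(x^\star,\delta)$ the segment from $x^\star$ to $x$ lies entirely in the ball. By Taylor's theorem (Lagrange form), there exists $\xi=\xi(x)$ on that segment with
\[
h_y(x) = h_y(x^\star) + \nabla h_y(x^\star)^T(x-x^\star) + \tfrac{1}{2}(x-x^\star)^T \nabla^2 h_y(\xi)(x-x^\star).
\]

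Next I would turn \eqref{eq:BoundedDerG} into a two-sided bound on $\nabla^2 h_y(\xi)$. Writing $\nabla^2 h_y(\xi)=A^T\nabla^2\tilde f_y(A\xi)A+\nu\nabla^2 g(\xi)$ and using \eqref{eq:BoundedDerG} on each piece (applied with test vector $Aw\in\mathbb{R}^n$ for the likelihood part and $w\in\mathbb{R}^p$ for the prior part) gives, for all $w\in\mathbb{R}^p$,
\[
\bigl|w^T(\nabla^2 h_y(\xi)-H)w\bigr| \;\le\; \delta\|A\|\,w^T A^T C_f A w + \nu\delta\,w^T C_g w \;=\; \delta\,w^T D w,
\]
i.e.\ $\bar H = H-\delta D \preceq \nabla^2 h_y(\xi) \preceq H+\delta D = \widetilde H$ uniformly in $\xi\in B(x^\star,\delta)$.

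Plugging the upper (respectively lower) Hessian bound into the Taylor identity, dividing through by $\tau$ and substituting $v=(x-x^\star)/\sqrt{\tau}$ yields
\[
\frac{h_y(x)-h_y(x^\star)}{\tau} \;\le\; \frac{1}{\sqrt\tau}\,\nabla h_y(x^\star)^T v + \tfrac{1}{2} v^T\widetilde H v,
\]
and similarly with $\bar H$ for the lower bound. Using the definition $\nabla h_y(x^\star)=Hx_0$ and completing the square in $v$ against the positive definite matrix $\widetilde H$ (respectively $\bar H$) produces the announced form, with constant term $-\tfrac{1}{2\tau} x_0^T H \widetilde H^{-1} H x_0$ (respectively with $\bar H^{-1}$); since $\|W\|^2=\|-W\|^2$ the sign inside the squared-norm is immaterial.

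The main obstacle is guaranteeing that the square-roots $\widetilde H^{1/2}$, $\bar H^{1/2}$ and inverses $\widetilde H^{-1}$, $\bar H^{-1}$ appearing in the completed square are well defined. Positive definiteness of $\widetilde H=H+\delta D$ is immediate from that of $H$ (which is a standing hypothesis in the surrounding theorem). For $\bar H=H-\delta D$ one needs $\delta\,\|H^{-1/2} D H^{-1/2}\|<1$; since $D=\|A\|A^T C_f A+\nu C_g$ and $H=A^T V_{y}(x^\star)A+\nu B(x^\star)$, the ratio $H^{-1/2}DH^{-1/2}$ stays bounded in both the well-posed and the ill-posed regimes (on the null space of $A$ both $H$ and $D$ scale linearly in $\nu$, while on the range of $A^T$ their leading parts are both of order $\|A\|^2$), so taking $\delta$ small and, in the ill-posed case, $\nu$ small suffices. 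This is precisely the "small enough $\delta$ and $\nu$" qualifier in the statement.
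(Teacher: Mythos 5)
Your proposal is correct and follows essentially the same route as the paper's proof: a second-order Taylor expansion with Lagrange remainder, the two-sided Hessian bound $\bar H \preceq \nabla^2 h_y(\xi) \preceq \widetilde H$ obtained from assumption \eqref{eq:BoundedDerG}, the change of variables $v=(x-x^\star)/\sqrt{\tau}$, and completion of the square using $\nabla h_y(x^\star)=Hx_0$. Your additional remarks on the positive definiteness of $\bar H$ for small $\delta$ (and small $\nu$ in the ill-posed case) make explicit a point the paper leaves implicit, and the sign discrepancy in the completed square (the natural computation gives $v+\widetilde H^{-1}Hx_0/\sqrt{\tau}$) is, as you note, immaterial for every downstream use since only $\|\widetilde H^{-1}Hx_0\|$ enters the bounds.
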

\begin{proof}

For $x\in B_{\delta}$, approximate $ h_y(x )$ by a quadratic function using Taylor decomposition in a neighbourhood of $x^\star$:
\begin{eqnarray*}
 h_y(x ) &=& h_y(x^\star) + [\nabla h_y(x^\star)]^T (x -x^\star) + \frac 1 2 (x -x^\star)^T \nabla^2  h_y(x_c) (x -x^\star),
\end{eqnarray*}
for some $x_c \in B(x^\star,\delta)$. Using Assumption \eqref{eq:BoundedDerG} stated in Section~\ref{sec:QuadraticApprox}, $\forall w \in B(0,\delta)$, we have
\begin{eqnarray*}
w^T \nabla^2  h_y(x_c) w &\leq&  w^T (H + A^T C_{f} A \delta ||A|| + \nu C_g \delta) w,\\
w^T \nabla^2  h_y(x_c) w &\geq& w^T (H - A^T C_{f} A \delta ||A|| - \nu C_g \delta) w.
\end{eqnarray*}
Dividing by $\tau$ and making the change of variables $v =  (x-x^\star)/\sqrt{\tau}$, we have
 \begin{eqnarray*}
 [h_y(x )-h_y(x^\star)]/\tau &\leq& \frac 1 2  v^T \widetilde{H} v  + \tau^{-1/2} [\nabla h_y(x^\star)]^T v,
\end{eqnarray*}
 \begin{eqnarray*}
 [h_y(x )- h_y(x^\star)]/\tau &\geq&  \frac 1 2  v^T \bar{H} v  + \tau^{-1/2} [\nabla h_y(x^\star)]^T v.
\end{eqnarray*}
Completing the square for each expression and recalling that $x_0 = H^{-1} \nabla h_y(x^\star)$ gives the statement of the lemma.

\end{proof}


\begin{proposition}\label{prop:LaplaceApproxAsympt}
Let assumptions on $f_y, \, g$ in Section \ref{sec:QuadraticApprox} and assumptions (\ref{eq:CondDeltaLocal}) on $\delta$ hold.
Assume that $ H=A^T V_{y}(x^\star) A +\nu B(x^\star) $ is of full rank, and that $\gamma\to 0$ and $\nu \to 0$ as $\tau \to 0$.


Then, for any $\varepsilon \in \left(||\bar{H}^{-1} H x_0||, \delta\right)$ such that
$\varepsilon \sqrt{||H||^{-1}} \to \infty$,
\begin{eqnarray*}
    \frac{\int_{\X \setminus B(x^\star, \varepsilon)} e^{-  h_y(x)/\tau} dx}{\int_{\X} e^{-  h_y(x)/\tau} dx} &\leqslant&
\left[  1- \Phix(B(0, \varepsilon/\sqrt{\tau}), \bar{H}) \right] \frac{1+\Delta_2}{1+\Delta_0}+\frac{\Delta_0}{1+\Delta_0},
\end{eqnarray*}
\begin{eqnarray*}
\int_{B(x^\star, \delta)} e^{- [ h_y(x)- h_y(x^\star)]/\tau} dx \geq \frac{\tau^{p/2} [2\pi]^{p/2}}{[\det({\tilde{H}})]^{1/2}}  \exp\left\{\frac {x_0^T  H \tilde{H}^{-1}{H} x_0 }{2\tau}  \right\} [1+\Delta_3],
\end{eqnarray*}
where $\Delta_3(\delta, y)$ and $\Delta_2$ are defined by
\begin{eqnarray}\label{eq:defD3}
  \Delta_3(\delta, y) &=&  -1 + \Phi(B(0, R(\delta))),
 \end{eqnarray}
\begin{eqnarray}\label{eq:DefDelta12}
\Delta_2(\delta, y) &=& \frac{\exp\left\{ \delta ||D||\, ||\bar{H}^{-1} \nabla h_y(x^\star)||\, ||\widetilde{H}^{-1} \nabla h_y(x^\star)|| /\tau \right\}}{ \Phi(B(0, R(\delta))) }   \left[ \frac{\det(\widetilde{H})}{\det(\bar{H})} \right]^{1/2}   
  \end{eqnarray}
  where $R(\delta) =\sqrt{ \lambda_{\min}(\tilde{H})/\tau} [ \delta-  ||\tilde{H}^{-1} H x_0||]$.

\end{proposition}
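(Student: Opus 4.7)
\textbf{Proof plan for Proposition~\ref{prop:LaplaceApproxAsympt}.}

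My approach rests entirely on Lemma~\ref{lem:Approx}, which sandwiches $[h_y(x)-h_y(x^\star)]/\tau$ between two shifted Gaussian exponents on $B(x^\star,\delta)$ with covariances determined by $\widetilde{H}=H+\delta D$ (giving the upper exponential envelope) and $\bar{H}=H-\delta D$ (giving the lower one). After the change of variables $v=(x-x^\star)/\sqrt{\tau}$, integrals of these envelopes over $B(x^\star,\delta)$ become Gaussian integrals over $B(0,\delta/\sqrt{\tau})$, and I identify them with the quantity $\Phix(\,\cdot\,,\Omega)$ of \eqref{def:Phix0}. The geometric inequality $\Phix(B(0,R),\Omega)\geq \Phi(B(0,r))$ stated just after \eqref{def:Phi}, with $r=\sqrt{\lambda_{\min}(\Omega)}(R-\tau^{-1/2}\|\Omega^{-1}Hx_0\|)$, converts those shifted Gaussian masses into masses of a standard Gaussian, which is what the definition of $\Delta_3$ (and hence of $\Delta_2$) uses.

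\textbf{Step 1: lower bound on $\int_{B(x^\star,\delta)}e^{-[h_y(x)-h_y(x^\star)]/\tau}\,dx$.} Use the upper bound in Lemma~\ref{lem:Approx}: the integrand is at least $\exp\{x_0^T H\widetilde{H}^{-1}Hx_0/(2\tau)\}\exp\{-\tfrac12\|\widetilde{H}^{1/2}(v-\widetilde{H}^{-1}Hx_0/\sqrt{\tau})\|^2\}$. Change variables to $v$ (producing $\tau^{p/2}$), rewrite the $v$-integral via \eqref{def:Phix0} as $(2\pi)^{p/2}[\det\widetilde{H}]^{-1/2}\Phix(B(0,\delta/\sqrt{\tau}),\widetilde{H})$, and apply the geometric inequality with $\Omega=\widetilde{H}$ and $R=\delta/\sqrt{\tau}$. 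This produces the factor $\Phi(B(0,R(\delta)))=1+\Delta_3$ and yields precisely the claimed lower bound. The hypothesis $\varepsilon\sqrt{\|H\|^{-1}}\to\infty$ (together with $\varepsilon<\delta$) guarantees that $R(\delta)\to\infty$, so $\Delta_3\to 0$ eventually.

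\textbf{Step 2: upper bound on the tail ratio.} Split
\begin{equation*}
\int_{\X\setminus B(x^\star,\varepsilon)}e^{-h_y(x)/\tau}\,dx
 = \int_{B(x^\star,\delta)\setminus B(x^\star,\varepsilon)}e^{-h_y(x)/\tau}\,dx
 + \int_{\X\setminus B(x^\star,\delta)}e^{-h_y(x)/\tau}\,dx.
\end{equation*}
Dividing by $\int_\X e^{-h_y(x)/\tau}\,dx=(1+\Delta_0)\int_{B(x^\star,\delta)}e^{-h_y(x)/\tau}\,dx$ makes the second piece contribute exactly $\Delta_0/(1+\Delta_0)$ by the definition \eqref{eq:DefD0}. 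For the annulus piece I apply the lower bound of Lemma~\ref{lem:Approx} to bound the integrand from above by a Gaussian envelope with covariance $\bar{H}^{-1}$; this converts the numerator into $\tau^{p/2}(2\pi)^{p/2}[\det\bar{H}]^{-1/2}e^{x_0^TH\bar{H}^{-1}Hx_0/(2\tau)}$ times $\Phix(B(0,\delta/\sqrt{\tau}),\bar{H})-\Phix(B(0,\varepsilon/\sqrt{\tau}),\bar{H})\leq 1-\Phix(B(0,\varepsilon/\sqrt{\tau}),\bar{H})$. Combining with the Step 1 lower bound on the denominator produces the claimed factor $[1-\Phix(B(0,\varepsilon/\sqrt{\tau}),\bar{H})]\,(1+\Delta_2)/(1+\Delta_0)$, provided the two prefactors match the formula for $\Delta_2$.

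\textbf{Step 3: reconciliation of the prefactors, and the main obstacle.} The comparison of prefactors produces $[\det\widetilde{H}/\det\bar{H}]^{1/2}/\Phi(B(0,R(\delta)))$ together with the exponential ratio $\exp\{x_0^TH(\bar{H}^{-1}-\widetilde{H}^{-1})Hx_0/(2\tau)\}$. The identity $\bar{H}^{-1}-\widetilde{H}^{-1}=\bar{H}^{-1}(\widetilde{H}-\bar{H})\widetilde{H}^{-1}=2\delta\,\bar{H}^{-1}D\widetilde{H}^{-1}$, combined with $Hx_0=\nabla h_y(x^\star)$ and submultiplicativity of the spectral norm, bounds the exponent by $\delta\|D\|\,\|\bar{H}^{-1}\nabla h_y(x^\star)\|\,\|\widetilde{H}^{-1}\nabla h_y(x^\star)\|/\tau$, which matches \eqref{eq:DefDelta12} exactly. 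The main obstacle I anticipate is purely bookkeeping: tracking the two shift vectors $\widetilde{H}^{-1}Hx_0$ and $\bar{H}^{-1}Hx_0$ (both of which must lie well inside $B(0,\varepsilon/\sqrt{\tau})$ for the geometric inequality to be nonvacuous, which is exactly the hypothesis $\varepsilon>\|\bar{H}^{-1}Hx_0\|$), and ensuring that the three matrices $H$, $\bar{H}$, $\widetilde{H}$ are simultaneously positive definite; this is where the smallness of $\delta$ (through $\delta\|D\|\ll\lambda_{\min}(H)$) and of $\nu$ enter. Everything else is routine change of variables and Gaussian calculus.
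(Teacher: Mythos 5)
Your proposal follows essentially the same route as the paper's own proof: the quadratic sandwich of Lemma~\ref{lem:Approx}, the change of variables $v=(x-x^\star)/\sqrt{\tau}$, the reduction of the shifted Gaussian masses to $\Phi(B(0,R(\delta)))$ via the geometric inequality, the split of $\X\setminus B(x^\star,\varepsilon)$ into the annulus plus the $\Delta_0$ term, and the identity $\bar{H}^{-1}-\widetilde{H}^{-1}=2\delta\widetilde{H}^{-1}D\bar{H}^{-1}$ to reconcile the prefactors into $\Delta_2$. The argument is correct and the bookkeeping you flag in Step 3 is exactly where the paper spends its effort as well.
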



\begin{proof}[Proof of Proposition \ref{prop:LaplaceApproxAsympt}.]


Making the change of variables $v =  (x-x^\star)/\sqrt{\tau}$ with Jacobian $J = \tau^{p/2}$ and applying Lemmas~\ref{lem:Approx} and \ref{lem:UpperBoundXG_gen}, we have the following bound 
\begin{eqnarray*}
\hspace{-2cm}&&\int_{B(x^\star, \delta)} e^{-  [h_y(x) - h_{y\exact}(x)]/\tau} dx \geq
\tau^{p/2} \exp\left\{ || \widetilde{H}^{-1/2} H x_0||^2/(2\tau) \right\}\\
\hspace{-2cm}  && \times
\int_{B(0, \delta/\sqrt{\tau})}  \exp\left\{-  \left(v  - \frac{\widetilde{H}^{-1} H x_{0}}{\sqrt{\tau}} \right)^T \widetilde{H} \left(v  - \frac{\widetilde{H}^{-1} H x_{0}}{\sqrt{\tau}} \right)/2\right\} dv \\
\hspace{-2cm} &=&  \tau^{p/2} \exp\left\{ || \widetilde{H}^{-1/2} H x_0||^2/(2\tau) \right\} \, [2\pi]^{p/2}  [\det(\widetilde{H})]^{-1/2} \Phix(B(0, \delta/\sqrt{\tau}), \widetilde{H})\\
\hspace{-2cm} &\geq&  \tau^{p/2} \exp\left\{ || \widetilde{H}^{-1/2} H x_0||^2/(2\tau) \right\} \, [2\pi]^{p/2}  [\det(\widetilde{H})]^{-1/2} \Phi(B(0, R(\delta))),
\end{eqnarray*}
where $R(\delta)$ is defined in the statement of the proposition. As $\tau \to 0$, by the assumptions on $\delta$ we have $\rho_{\rm K}(Y, y\exact) \to 0$, $\nu \to 0$ and hence $||H \bar{H}^{-1}|| \to 1$ and $\det(\widetilde{H}) \to \det(\Omega_{00}) \det(B_{11})$ in the notation of Lemma~\ref{lem:LocalConst}. By Lemma~\ref{lem:UpperBoundXG_gen} and  assumptions \eqref{eq:BoundedDerDiffF2}, for $y\in \Y_{\rm loc}$
$$
||x_0|| \leq ||H^{-1} A^T M_{f1}(y-y\exact)|| + \nu ||H^{-1} \nabla g(x^\star)||  \leq c_1 \rho_{\rm K}(Y, y\exact) + c_2 \nu,     
$$
and, since $\delta (c_1 \rho_{\rm K}(Y, y\exact) + c_2 \nu)^2/\tau \to 0$ and $\delta/\sqrt{\tau} \to \infty$ ($\delta/\gamma \to 0$ for an ill-posed problem), we have  $R(\delta) \to \infty$ and hence $\Delta_3(\delta) \to 0$. Thus, since $\P(Y \in \Y_{\rm loc}) \to 1$, we have the statement of Lemma~\ref{lem:LocalConst}.


Similarly, 
we can obtain an upper bound on this integral:
\begin{eqnarray*}
&&\int_{B(x^\star, \delta) \setminus B(x^\star, \varepsilon)} e^{-  [h_y(x)-h_y(x^\star)]/\tau} dx \leq
\tau^{p/2} \exp\left\{  || \bar{H}^{1/2} H x_{0}||^2/(2\tau) \right\}\\
&\times&\int_{ \varepsilon/\sqrt{\tau} \leq  ||v||  \leq \delta/\sqrt{\tau} }
  \exp\left\{  - \frac 1 {2 } || \bar{H}^{1/2}(v - \tau^{-1/2} \bar{H}^{-1}{H} x_0) ||^2   \right\} dv\\
& \leq& \tau^{p/2} [\det(\bar{H})]^{-1/2} (2\pi)^{p/2} \exp\left\{  || \bar{H}^{1/2} H x_{0}||^2/(2\tau) \right\}
(1- \Phix(B(0, \varepsilon/\sqrt{\tau}), \bar{H})).
 \end{eqnarray*}
Assume that $\delta$ is small enough so that $\bar{H}$ is positive definite.

Combining these results together, we have that for $\varepsilon > ||\bar{H}^{-1} {H} x_0 ||$,
\begin{eqnarray*}
&&\frac{\int_{B(x^\star, \delta) \setminus B(x^\star, \varepsilon)} e^{-  h_y(x)/\tau} dx}{\int_{B(x^\star, \delta)} e^{-  h_y(x)/\tau} dx}
\leq  \frac{  1- \Phix(B(0, \varepsilon/\sqrt{\tau}), \bar{H})}{  \Phi(B(0, R(\delta)))}   \left[ \frac{\det(\widetilde{H})}{\det(\bar{H})} \right]^{1/2}  \\
&&\quad \quad \times  \exp\left\{ \delta \, (  \nabla h_y(x^\star))^T  \bar{H}^{-1} D \widetilde{H}^{-1} \nabla h_y(x^\star) /\tau \right\},
 \end{eqnarray*}
since
\begin{eqnarray*}
\bar{H}^{-1} - \widetilde{H}^{-1} &=& \widetilde{H}^{-1} ( \widetilde{H} - \bar{H}) \bar{H}^{-1} = 2 \delta \widetilde{H}^{-1} D \bar{H}^{-1}.
 \end{eqnarray*}

Now we take into account the error of approximating the integral over $\X$ by the integral over $B(x^\star, \varepsilon)$:
\begin{eqnarray*}
\frac{\int_{\X \setminus B(x^\star, \varepsilon)} e^{-  h_y(x)/\tau} dx}{\int_{\X} e^{-  h_y(x)/\tau} dx}& =&
\frac{\int_{B(x^\star, \delta) \setminus B(x^\star, \varepsilon)} e^{-  h_y(x)/\tau} dx + \int_{\X \setminus B(x^\star, \delta) } e^{-  h_y(x)/\tau} dx}{\int_{B(x^\star, \delta)  } e^{-  h_y(x)/\tau} dx + \int_{\X \setminus B(x^\star, \delta) } e^{-  h_y(x)/\tau} dx}\\
&=&
\frac{\int_{B(x^\star, \delta) \setminus B(x^\star, \varepsilon)} e^{-  h_y(x)/\tau} dx  } {(1+\Delta_0)\int_{B(x^\star, \delta)  } e^{-  h_y(x)/\tau} dx } + \frac{\Delta_0}{1+\Delta_0}\\
&\leq& \frac{\Delta_0}{1+\Delta_0} + \frac{  1- \Phix(B(0, \varepsilon/\sqrt{\tau}), \bar{H})}{  (1+\Delta_0)\, \Phi(B(0, R(\delta))) }   \left[ \frac{\det(\widetilde{H})}{\det(\bar{H})} \right]^{1/2}   \\
&& \times  \exp\left\{ \delta ||D||\, ||\bar{H}^{-1} \nabla h_y(x^\star)||\, ||\widetilde{H}^{-1} \nabla h_y(x^\star)|| /\tau \right\},
\end{eqnarray*}
which gives the statement of the proposition. 
\end{proof}


\begin{proof}[Proof of Theorem \ref{th:ProkhUpperInt}.]

By Strassen's theorem \cite{Dudley}, for any $x$, $\rho_{\rm P} (\mu_{\rm post}(\omega), \delta_{x }) =  \rho_{\rm K} (\xi,
 x)$ where $\xi \sim \mu_{\rm post}(\omega)$. Hence, it is sufficient to find an upper bound on the Ky Fan distance between $\xi$ and $x^\star$.
 We consider $\omega$ such that  $Y(\omega) \in \Y_{\rm loc}$ and small $\tau$ and $\delta$ such that  matrix $\bar{H}$ is of full rank.

Take $\varepsilon >  ||\bar{H}^{-1} H x_0|| $. Using Proposition~\ref{prop:LaplaceApproxAsympt}, we have an upper bound on
$\varepsilon$ satisfies
\begin{eqnarray*}
  \frac{\int_{B(x^\star, \delta) \setminus B(x^\star, \varepsilon)} e^{-  h_y(x)/\tau} dx}{\int_{B(x^\star, \delta)} e^{-  h_y(x)/\tau} dx}
  &\leqslant& \left[ 1 - \Phix(B(0, \varepsilon/\sqrt{\tau}), \bar{H})\right]
(1+\widetilde\Delta_2) + \widetilde\Delta_0 \leq \varepsilon,
\end{eqnarray*}
where $\widetilde\Delta_0 = \Delta_0/(1+ \Delta_0)$ and $\widetilde\Delta_2 = (1+ \Delta_2)/(1+\Delta_0) -1$.
The last  inequality implies that as $\tau/\lambda_{\rm min} ({H}) \to 0$, $\varepsilon \to 0$ and $\varepsilon^2  \lambda_{\rm min} ({H})/\tau  \to \infty$.
 Hence, using Lemmas \ref{lem:KyFanNormal} and \ref{lem:KyFanIneq},
 for small enough $\tau$, the Ky Fan distance between $\xi \sim \mu_{\rm post}(\omega)$ and $x^\star$ is bounded by
\begin{eqnarray*}
\rho_{\rm K}( \xi, x^\star) &\leq&  ||\bar{H}^{-1} H x_0||  + \sqrt{-  4 \tau \trace(\bar{H}^{-1})  \log\left( 4 \tau \trace(\bar{H}^{-1})(1+\widetilde\Delta_2)^2\right) },
\end{eqnarray*}
which is also an upper bound on the Prokhorov distance between $ \mu_{\rm post}(\omega)$ and $\delta_{x^\star}$.

Now we bound the bias using assumption \eqref{eq:BoundedDerDiffF2} and Lemma \ref{lem:UpperBoundXG_gen}:
\begin{eqnarray*}
||\bar{H}^{-1} \nabla h_y(x^\star)|| &\leq& ||\bar{H}^{-1} A^T \nabla \tilde{f}_y(Ax^\star)|| + \nu ||\bar{H}^{-1} \nabla g(x^\star)||\\
 &\leq&   ||\bar{H}^{-1} A^T M_{f1} (y - y\exact)|| + \nu ||\bar{H}^{-1} \nabla g(x^\star)||.
\end{eqnarray*}


Hence an upper bound on the Ky Fan distance is the smallest   $\varepsilon \geq \widetilde\Delta_0 >0$ that satisfies the obtained upper bound. Therefore, the Ky Fan distance (and thus, the corresponding Prokhorov distance) is bounded from above by
\begin{eqnarray*}
\hspace{-1.2cm}\rho_{\rm P}(\mu_{\rm post}(\omega), \delta_{x^\star})
 &\leqslant&   \max\left\{   \widetilde\Delta_0, \quad
  ||\bar{H}^{-1}  A^T  M_{f1}(Y(\omega)-y\exact)|| +  \nu   ||\bar{H}^{-1}   \, \nabla g(x^\star) ||  \right.  \notag\\
&+& \left. \sqrt{- 4 \tau \trace(\bar{H}^{-1}) \log \left( 4 \tau \trace(\bar{H}^{-1})\right) }(1+\Delta_{\star}(\delta,   Y(\omega)))  \right\},
\end{eqnarray*}
where  $\Delta_0=\Delta_0(B(0,\delta))$ defined by (\ref{eq:DefD0}) and $\Delta_{\star}(\delta, y)$ is the following bound on the error:
\begin{eqnarray}\label{eq:DefDeltaStar1}
&&\frac{1}{1+\tilde\Delta_2} \left(1 + \frac{\log[(1+\tilde\Delta_2)^2]}{-\log(4 \tau \trace(\bar{H}^{-1}))} \right)^{1/2} \notag\\
&=& \frac{1+ \Delta_0}{(1+ \Delta_2)}\left[ 1 +2\frac{\log(1+ \Delta_2)- \log(1+ \Delta_0)}{ -\log\left( 4 \tau \trace(\bar{H}^{-1})\right) } \right]^{1/2} -1 \notag\\
&\leq& \left( 1 + \frac{\Delta_0- \Delta_2}{(1+ \Delta_2)}\right)\left[ 1 - \frac{\Delta_0- \Delta_2}{-(1+ \Delta_2) \log\left( 4 \tau \trace(\bar{H}^{-1}) \right) } \right] -1 \notag\\
&\leq& \frac{\Delta_0 - \Delta_2}{(1+ \Delta_2)}\left( 1 -  \frac{1}{-\log\left( 4 \tau \trace(\bar{H}^{-1}) \right) } \right)  \notag\\
&\leq& \Delta_0 + \Delta_2 \stackrel{def}{=} \Delta_{\star}(\delta, y),
\end{eqnarray}
using the inequality $(1 + c\log(1-x))^{1/2}\leq 1-cx/2$ for $x \in (0, \min(1, 2/c))$, $c>0$, $4 \tau \trace(\bar{H}^{-1}) < 1/e$ and that $\Delta_0$, $\Delta_2$ are nonnegative.

\end{proof}


\begin{proof}[Proof of Theorem~\ref{lem:KyFanDiffBounds}.]
First we note that
\begin{eqnarray*}
&& \P\left\{\, d_x (X_1(\omega), X_2(\omega) ) \leqslant  \Phi_1(\rho_{\rm K} (Y_1, Y_2)) \cap \Omega_1 \right\}\\
&& +\P\left\{\, d_x (X_1(\omega), X_2(\omega) ) \leqslant  \Phi_2(\rho_{\rm K} (Y_1, Y_2)) \cap \Omega_2 \right\}\\
&\geq& \P\left\{\, \Phi_1(d_y (Y_1(\omega), Y_2(\omega) )) \leqslant  \Phi_1(\rho_{\rm K} (Y_1, Y_2)) \cap \Omega_1 \right\}\\
&&+ \P\left\{\, \Phi_2(d_y (Y_1(\omega), Y_2(\omega) )) \leqslant  \Phi_2(\rho_{\rm K} (Y_1, Y_2)) \cap \Omega_2 \right\}\\
&=& \P\left\{\,  d_y (Y_1(\omega), Y_2(\omega) )  \leqslant   \rho_{\rm K} (Y_1, Y_2) \cap \Omega_1 \right\}\\
&& +\P\left\{\,  d_y (Y_1(\omega), Y_2(\omega) ) \leqslant  \rho_{\rm K} (Y_1, Y_2) \cap \Omega_2 \right\}\\
&=& \P\left\{\,  d_y (Y_1(\omega), Y_2(\omega) )  \leqslant   \rho_{\rm K} (Y_1, Y_2) \right\}\\
&\geq&  1 -  \rho_{\rm K} (Y_1, Y_2).
\end{eqnarray*}

On the other hand,
\begin{eqnarray*}
&& \P\left\{\, d_x (X_1(\omega), X_2(\omega) ) \leqslant  \Phi_1(\rho_{\rm K} (Y_1, Y_2)) \cap \Omega_1 \right\}\\
&& + \P\left\{\, d_x (X_1(\omega), X_2(\omega) ) \leqslant  \Phi_2(\rho_{\rm K} (Y_1, Y_2)) \cap \Omega_2 \right\}\\
&\leq& \P\left\{\, d_x (X_1(\omega), X_2(\omega) ) \leqslant
\Phi_1(\rho_{\rm K} (Y_1, Y_2)) \right\} +  \P\left\{\,  \Omega_2
\right\}.
\end{eqnarray*}
Putting these together implies
\begin{eqnarray*}
  \P\left\{\, d_x (X_1(\omega), X_2(\omega) ) >
\Phi_1(\rho_{\rm K} (Y_1, Y_2)) \right\} \leq \rho_{\rm K} (Y_1, Y_2) + \P\left\{\,  \Omega_2
\right\},
\end{eqnarray*}
hence, using Lemma~\ref{lem:KyFanIneq}, we have
\begin{eqnarray*}
\rho_{\rm K} (X_1 , X_2) \leqslant \max\left\{
\Phi_1(\rho_{\rm K} (Y_1, Y_2)), \rho_{\rm K} (Y_1, Y_2) + \P( \Omega_2
)\right\},
\end{eqnarray*}
and we have the first statement. The second statement follows from the first inequality
and
\begin{eqnarray*}
&& \P\left\{\, d_x (X_1(\omega), X_2(\omega) ) \leqslant  \Phi_1(\rho_{\rm K} (Y_1, Y_2)) \cap \Omega_1 \right\}\\
&&+ \P\left\{\, d_x (X_1(\omega), X_2(\omega) ) \leqslant  \Phi_2(\rho_{\rm K} (Y_1, Y_2)) \cap \Omega_2 \right\}\\
&\leq& \P\left\{\, d_x (X_1(\omega), X_2(\omega) ) \leqslant
\max[\Phi_1(\rho_{\rm K} (Y_1, Y_2)) ,
\Phi_2(\rho_{\rm K} (Y_1, Y_2)) ]\right\}.
\end{eqnarray*}

\end{proof}

\begin{proof}[Proof of Theorem \ref{th:KyFanPostInt}. ]
Now we prove Theorem \ref{th:KyFanPostInt}   in the notation defined in the proof of Theorem~\ref{th:ProkhUpperInt}.
 We apply Theorem \ref{lem:KyFanDiffBounds} with  $\Omega_1
=\{\omega: \, ||Y(\omega)-y\exact||\leq \rho_{\rm K}(Y,
y\exact)\}$ and $\Omega_2 = \Omega \setminus \Omega_1$ with
$\P(\Omega_2)\leq \rho_{\rm K}(Y, y\exact)$ by the definition of
Ky Fan distance, with the bound $\Phi_1$ given in Theorem
\ref{th:ProkhUpperInt}. For small enough $\tau$,
the assumption of Theorem~\ref{th:ProkhUpperInt} that  $\bar{H}$
is of full rank holds on $\Omega_1$, as we
shall show below. The bound stated in Theorem~\ref{th:ProkhUpperInt} depends on $y$ via $y-y\exact$ and $H$, so we bound it above so that it
depends on $y$ only via $||y - y\exact||$. 



Since $ V_y(x)  = \nabla^2 \tilde{f}_y(Ax)$, using Assumption \eqref{eq:BoundedDerDiffF2},
 for any $v\in \mathbb{R}^p$,
\begin{eqnarray*}
&& |v^T [H_{y}(x^\star) - H_{y\exact}(x^\star)] v| \leq  v^T A^T M_{f2}^{1/2} \diag(|y_i-{y\exact}_i|) M_{f2}^{1/2} A v \notag\\
&& \quad \quad \leq \max_i |y_{i} -{y\exact}_{i}| \, ||M_{f2}^{1/2} A v||^2 \leq ||y -{y\exact}||  ||M_{f2}^{1/2} A v||^2. 
\end{eqnarray*}

Hence, on $\Omega_1$,
$$
||\bar{H}^{-1} H_{\nu}||\leq ||[I  - \delta H_{\nu}^{-1} D  - H_{\nu}^{-1} A^T M_{f2} A \,\, \rho_{\rm K}(Y, y\exact)  ]^{-1}  || \leq [1 - \tilde\lambda]^{-1}
$$
where $\tilde\lambda = \delta ||H_{\nu}^{-1} D || + \rho_{\rm K}(Y, y\exact) ||H_{\nu}^{-1} A^T M_{f2} A|| \, $, and
$$
[1+\tilde\lambda]^{-1} \leq  ||[I  + \delta H_{\nu}^{-1} D   + H_{\nu}^{-1} A^T M_{f2} A \,\, \rho_{\rm K}(Y, y\exact)  ]^{-1}  || \leq ||\tilde{H}^{-1} H_{\nu}||. 
$$
This implies that
$$
||\tilde{H}^{-1} H x_0||  \leq ||\bar{H}^{-1} H x_0||   \leq    \frac{||H_{\nu}^{-1} A^T M_{f1} (Y-y\exact)|| + \nu ||H_{\nu}^{-1} \nabla g(x^\star)|| }{1-\tilde\lambda} \stackrel{def}{=} bias.
$$


The $k$-th smallest eigenvalues of matrices $H_{\nu}^{-1}$ and $\bar{H}^{-1}$ satisfy
$
\lambda_k(\bar{H}^{-1}) \leq  \frac{\lambda_k(H_{\nu}^{-1})}{1-\tilde\lambda},
$
that implies
$$
\trace(\bar{H}^{-1}) = \sum_{i=1}^{p} \lambda_k(\bar{H}^{-1})\leq \frac{\trace(H_{\nu}^{-1}) }{1 - \tilde\lambda}.
$$


The ratio of the determinants is bounded by
\begin{eqnarray*}
\frac{\det(\widetilde{H})}{\det(\bar{H})} =\frac{\det(H_{\nu}^{-1} \widetilde{H})}{\det(H_{\nu}^{-1}\bar{H})} \leq \left(\frac{1+\tilde\lambda}{1-\tilde\lambda}\right)^{p}.
\end{eqnarray*}


Also, we have
\begin{eqnarray*}
R(\delta) \geq \sqrt{ \frac{\lambda_{\min}(H_{\nu})}{\tau}} \sqrt{1 - \tilde\lambda} [ \delta-  bias] \stackrel{def}{=} R^\star(\delta).
\end{eqnarray*}

Therefore, on $\Omega_1$, we have
\begin{eqnarray*}
\Delta_3(\delta, y) &\geq& \Delta_3^\star(\delta) =  -1 + \Phi(B(0, R^\star(\delta))),
  \end{eqnarray*}
\begin{eqnarray*}
\Delta_2(\delta, y) &\leq&  \exp\left\{ \delta  \, ||D ||  \frac{[ ||H_{\nu}^{-1} A^T M_{f1}|| \rho_{\rm K}(Y, y\exact) + \nu ||H_{\nu}^{-1} \nabla g(x^\star)||]^2}{
\tau[  1  -  \widetilde\lambda]^2 }  \right\}\notag \\
&\times&  \left[1+ \Delta_3^\star(\delta)\right]^{-1} \left(\frac{1+\tilde\lambda}{1-\tilde\lambda}\right)^{p/2} -1 \stackrel{def}{=} \Delta_2^\star(\delta).
  \end{eqnarray*}

Under the assumptions on $\delta$, $\rho_{\rm K}(Y, y\exact)$ and $\tau$, we have $R(\delta) \to \infty$ and hence $\Delta_3^\star \to 0$ as $\tau \to 0$; also, we have $\widetilde\lambda \to 0$ and $\Delta_2^\star \to 0$.


Hence, on $\Omega_1$  $\Delta_0$ is bounded from above  by
\begin{eqnarray}\label{eq:DefDeltaStar0}
\Delta_0^\star(B(0,\delta)) &=& \frac{\tau^{p/2} \int_{\X \setminus B(x^\star, \delta)} \exp\left\{- \tau^{-1}[ h_{Y(\omega)}(x) - h_{Y(\omega)}(x^\star)]\right\} dx}
{ \exp\left\{ [2\tau]^{-1} [\bar{c}_1\nu - \bar{c_2}\rho_{\rm K}(Y, y\exact)]^2   \right\}}\\
 &\times&    \frac{[1 +\widetilde\lambda]^{p/2}}{[1+\Delta_3^\star]} \frac{[\det(H_{\nu})]^{1/2}}{ [2\pi]^{p/2} }, \notag
\end{eqnarray}
where $\bar{c}_1 = ||H_{\nu}^{-1} A^T M_{f1}||/(1-\widetilde\lambda)$ and $\bar{c}_2 = ||H_{\nu}^{-1} \nabla g(x^\star)||/(1-\widetilde\lambda)$.

Therefore, on $\Omega_1$,
\begin{eqnarray*}
\rho_{\rm P}(\mu_{\rm post}(\omega), \delta_{x^\star}) &\leq&  
\bar{c}_1 ||Y-y\exact|| + \bar{c}_2 \nu  \\
&+&  \left. \frac{\sqrt{  4\tau \trace(H_\nu^{-1})}}{\sqrt{ 1 - \tilde\lambda}} \sqrt{-\log  \left(\frac{4\tau \trace(H_\nu^{-1})}{1 - \tilde\lambda}\right) } (1+ \Delta_{\star}(\delta, y))\right\},
\end{eqnarray*}
since the function $ - x \log x$ increases for $x\in(0, 1/e)$.

The bound  on $\rho_{\rm P}(\mu_{\rm post}(\omega), \delta_{x^\star})$ increases as a function of $||Y-y\exact||$.
 Using the lifting Theorem \ref{lem:KyFanDiffBounds}, we have that, for small enough $\tau, \nu$, $\delta$,
\begin{eqnarray*}
\hspace{-1.5cm}\rho_{\rm K}(\mu_{\rm post}, \delta_{x^\star}) &\leq& \max\left\{2\rho_{\rm K}(Y, y\exact), \, \Delta_0^\star, \,\bar{c}_1 \rho_{\rm K}(Y, y\exact) + \bar{c}_2 \nu  \right.\\
&+&  \left.  \sqrt{ - 4\tau \trace(H_\nu^{-1}) \log  \left( 4\tau \trace(H_\nu^{-1}) \right) } (1+ \Delta_{\star, K}(\delta))\right\},
\end{eqnarray*}
with $\Delta_{\star, K}(\delta) =\Delta_0^\star  + \Delta_2^\star + \tilde\lambda$, i.e. 
\begin{eqnarray}\label{eq:DefDeltaStarK}
\hspace{-1.5cm}\Delta_{\star, K}(\delta) &=& \frac{\exp\left\{ \delta  \, ||D ||  [ \bar{c}_1 \rho_{\rm K}(Y, y\exact) + \bar{c}_2 \nu ]^2 /\tau  \right\}}{ \Phi(B(0, R^\star(\delta)))}
 \left(\frac{1+\tilde\lambda}{1-\tilde\lambda}\right)^{p/2} -1 + \Delta_0^\star+ \tilde\lambda  
\end{eqnarray}
where $\Delta_0^\star$ is defined by \eqref{eq:DefDeltaStar0} and here we collect other constants: 
\begin{eqnarray*}
\widetilde\lambda &=& \delta ||H_{\nu}^{-1} D || + \rho_{\rm K}(Y, y\exact) ||H_{\nu}^{-1} A^T M_{f2} A||\\
D &=& A^T C_{f} A \, ||A|| + \nu C_{g},\\
\bar{c}_1 &=& ||H_{\nu}^{-1} A^T M_{f1}||/(1-\widetilde\lambda)\\ 
\bar{c}_2 &=& ||H_{\nu}^{-1} \nabla g(x^\star)||/(1-\widetilde\lambda),\\
R^\star(\delta) &=& \sqrt{ \frac{\lambda_{\min}(H_{\nu})}{\tau}} \sqrt{1 - \tilde\lambda} [ \delta-  \bar{c}_1 \rho_{\rm K}(Y, y\exact) - \bar{c}_2 \nu].
\end{eqnarray*}

 Thus, we have the statement of Theorem \ref{th:KyFanPostInt}.


To have a more accurate upper bound in terms of the distance between $\tilde{Y} = H_{\nu}^{-1} A^T M_{f1} Y$ and $\tilde{y}\exact = H_{\nu}^{-1} A^T M_{f1} y\exact$ that is necessary for the case of growing dimension, we have
\begin{eqnarray*}
|v^T [H_{Y}(x^\star) - H_{y\exact}(x^\star)] v| &\leq&  ||\tilde{Y}-\tilde{y}\exact|| \,    v^T  M_{f1}^{-1} M_{f2} M_{f1}^{-1} v,
\end{eqnarray*}
and hence we have different values of $\widetilde\lambda$ and $c_1$:
\begin{eqnarray*}
\widetilde\lambda &=& \delta ||H_{\nu}^{-1} D || + \rho_{\rm K}(\tilde{Y}, \tilde{y}\exact) ||M_{f1}^{-1} M_{f2} M_{f1}^{-1}||,\\
\bar{c}_1 &=& 1/(1-\widetilde\lambda),
\end{eqnarray*}
and in the upper bound on $\rho_{\rm K}(\mu_{\rm post}, \delta_{x^\star})$, $\rho_{\rm K}(Y, y\exact)$ is replaced by $\rho_{\rm K}(\tilde{Y}, \tilde{y}\exact)$.

\end{proof}


\subsection{Proofs of the results in Sections \ref{sec:ExpFamily} and \ref{sec:pgrows}}

\begin{proof}[Proof of Lemma \ref{lem:XYcondExp}.]

For $\ETA =   Ax$ and $x \in B(x^\star, \delta)$, the first condition in \eqref{eq:BoundedDerG} is satisfied due to
\begin{eqnarray*}
|v^T (\nabla^2 \tilde{f}_y( Ax) - \nabla^2 \tilde{f}_y(y\exact))v| &\leq&  \delta \, ||A||\, v^T  C_{f,\, y} v \quad \forall v\in \mathbb{R}^n
\end{eqnarray*}
with $C_{f,\, y} = \diag(\max_{x \in B(x^\star, \delta)} | y_i b'''([ Ax]_i) - c'''([ Ax]_i)| $.

On $\Y_{\rm loc} = \{y\in \Y: \, ||y-y\exact|| \leq \rho_{\rm K}(Y, y\exact)\}$,
$$
C_{f,\, y} \leq C_{f} = \diag\left(\max_{x \in B(x^\star, \delta)} [ (|{y\exact}_i| + \rho_{\rm K}(Y, y\exact))| b'''([ Ax]_i)| + |c'''([ Ax]_i)|]\right).
$$


Conditions on convergence in $Y$ \eqref{eq:BoundedDerDiffF2} are satisfied with
\begin{eqnarray*}
|v^T( \nabla \tilde{f}_y(y\exact) - \nabla  \tilde{f}_{y\exact}(y\exact))| &=& | \sum_i v_i (y_i-{y\exact}_i) b'({y\exact}_i)|\notag\\ &=& |v^T M_{f,\, 1}(y - y\exact)|,\\
|v^T (\nabla^2 \tilde{f}_y(y\exact) - \nabla^2 \tilde{f}_{y\exact}(y\exact))v| &=& |v^T \diag((y_i-{y\exact}_i) b''({y\exact}_i)) v|
\end{eqnarray*}
with $M_{f,\, 1} = \diag(|b'({y\exact}_i)|)$ and $M_{f,\, 2} = \diag(|b''({y\exact}_i)|)$.
\end{proof}

\begin{proof}[Proof of Lemma \ref{lem:SpectralRate}.] To obtain an appropriate rate for random bias, we consider the response random variable $\tilde{Y} = H_{\nu}^{-1} A^T M_{f1} Y$ and $\tilde{y}\exact = H_{\nu}^{-1} A^T M_{f1}y\exact$ (see the end of the proof of Theorem~\ref{th:KyFanPostInt} for the modified constants $\bar{c}_1$ and $\tilde\lambda$). The random vector $\tilde{Y}$ has independent components, with the variance
$$
\Var(\tilde{Y}_i)  \asymp \Var\left( \frac{i^{-\alpha+s} }{i^{-2\alpha+s} + \nu i^{2\kappa+1}} Y_i\right) \asymp  \frac{\tau  \, i^{-2\alpha+s} }{[i^{-2\alpha+s} + \nu i^{2\kappa+1}]^2}.
$$


Denote $m = 2\alpha-s+2\kappa +1$. Assume that $m>0$ and $2\alpha-s + 4\kappa+1 >0$. Then,  by Lemma~\ref{lem:Knapik},
\begin{eqnarray*}
M &=& \sum_{i=1}^p \Var(\tilde{Y}_i)/\tau \asymp \sum_{i=1}^p \frac{i^{-2\alpha+s} }{[i^{-2\alpha+s} + \nu i^{2\kappa+1}]^2}
 =  \sum_{i=1}^p \frac{\nu^{-2} \, i^{-2\alpha+s - 4\kappa-2} }{[ \nu^{-1} i^{-2\alpha+s- 2\kappa-1} + 1 ]^2}\\
   &\leq& C  \min(\nu^{-1/m}, p)^{(2\alpha-s +1)_+} [\log p]^{I(s=2\alpha+1)}.
   \end{eqnarray*}
According to Proposition~\ref{th:KyFanData}, the Ky Fan distance between $\tilde{Y}$ and $\tilde{y}\exact$ is
$$
\rho_{\rm K}(\tilde{Y}, \tilde{y}\exact) \leq C \tau^{1/2} \min(\nu^{-1/m}, p)^{( \alpha-s/2 +1/2)_+}[ \log (p/\tau) ]^{(1+I(s=2\alpha+1))/2}.
$$
The rate is parametric ($\tau^{1/2}$ up to a log factor) if  $s >2\alpha  +1 $. If  $s <2\alpha  +1 $, the rate is slower than  parametric; it tends to 0 if
$ \min(\nu^{-1/m}, p) \ll \tau^{-1/(2\alpha-s +1)}$.
The Ky Fan rate of convergence of the posterior distribution  cannot be faster than this rate (Theorem~\ref{th:KyFanPostInt}).

We have two more terms that we bound using Lemma~\ref{lem:Knapik}.

Now we assess  nonrandom bias under the Gaussian prior with zero mean, so that $\nabla g(x\true) = B x\true$:
\begin{eqnarray*}
\nu ||H_{\nu}^{-1} \nabla g(x\true)|| &\asymp& \nu\left[\sum_{i=1}^p \frac{i^{2(2\kappa+1) - 2\beta-1}}{[i^{-2\alpha+s}  + \nu i^{2\kappa +1}]^2}\right]^{1/2}\\
&=& \left[\sum_{i=1}^p \frac{i^{ - 2\beta-1}}{[i^{-2\alpha+s -2\kappa -1} \nu^{-1} +1]^2}\right]^{1/2}\\
&\leq& C  \nu \min(\nu^{-1/m},p)^{(m -\beta)_+} [ \log p]^{ I(\beta=m)/2}.
\end{eqnarray*}
A different prior distribution would give a different bound.

%
According to Proposition~\ref{th:KyFanData}, the variance term is determined by
\begin{eqnarray*}
\tau \trace(H_{\nu}^{-1}) &\asymp&  \tau \sum_{i=1}^p [i^{-2\alpha+s}  + \nu i^{2\kappa +1}]^{-1}=  \tau \nu^{-1} \sum_{i=1}^p \frac{i^{-2\kappa -1}}{i^{-2\alpha+s-2\kappa -1}\nu^{-1}  +  1 }\\
  &\leq&  C\tau \min(\nu^{-1/m},p)^{(2\alpha-s +1)_+} [\log p]^{I(s=2\alpha  +1)}
\end{eqnarray*}
using Lemma~\ref{lem:Knapik}.
Note that the variance term is of the same order as the upper bound on $\rho_{\rm K}(\tilde{Y}, \tilde{y}\exact)$.

Combining together all the terms, we have the statement of the lemma. 

\end{proof}



\subsection{Ky Fan distance inequalities}\label{sec:KyFanGauss}


\begin{lemma}\label{lem:KyFanExp}
Assume that $A \to 0$ and $A\in (0, e^{-1}]$. Then the solution
of
$$
\exp\{ - z/A \} = z
$$
satisfies
$$
z  =  -A \log(A)(1 +w_{A}),
$$
where $w_{A} \leqslant 0$ and $w_{A} = o(1)$ as $A\to 0$.

\end{lemma}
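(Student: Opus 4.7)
The plan is to analyse $F(z) = e^{-z/A} - z$, whose derivative $-A^{-1} e^{-z/A} - 1$ is strictly negative, so $F$ is strictly decreasing on $[0,\infty)$ with $F(0) = 1$ and $F(z) \to -\infty$ as $z\to\infty$; hence the equation $F(z) = 0$ has a unique positive root, call it $z_A^\ast$. For the sign statement $w_A \le 0$, equivalently $z_A^\ast \le -A\log A$, it suffices to evaluate $F$ at $z_0 := -A\log A$: one gets $F(z_0) = A - (-A\log A) = A(1+\log A) \le 0$ whenever $A \in (0, e^{-1}]$, and then the monotonicity of $F$ forces $z_A^\ast \le z_0$.

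To obtain $w_A = o(1)$, I would substitute $u = z_A^\ast/A$, which converts $e^{-z_A^\ast/A} = z_A^\ast$ into the equation $u + \log u = -\log A =: L$. The upper bound just proved reads $u \le L$, whence $\log u \le \log L$, and therefore $u \ge L - \log L$. Plugging this lower bound back into $\log u = L - u$ for one more iteration yields $u = L - \log L + O((\log L)/L)$, so that $z_A^\ast = Au$ satisfies
\[
z_A^\ast = -A\log A \left(1 - \frac{\log(-\log A)}{-\log A} + o\!\left(\frac{\log(-\log A)}{-\log A}\right)\right),
\]
and hence $w_A = \log(-\log A)/\log A\cdot(1+o(1))$, which tends to $0$ and is non-positive since $\log(-\log A) \ge 0$ on $(0, e^{-1}]$ while $\log A < 0$.

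The only subtlety is ensuring $u \to \infty$ so that the iteration makes sense and that the $o(1)$ control is uniform; both follow because $u \mapsto u + \log u$ is a strictly increasing bijection of $(0,\infty)$ onto $\mathbb{R}$, so $u$ is determined implicitly as an increasing continuous function of $L = -\log A$ that tends to $\infty$ with $L$. This is essentially the only non-algebraic step; everything else is rearrangement of the defining relation.
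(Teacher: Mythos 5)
Your proof is correct and follows essentially the same route as the paper: the exact inequality $z \le -A\log A$ comes from evaluating the defining relation at $-A\log A$ and using monotonicity, and the asymptotics come from the substitution $u = z/A$ reducing the equation to $u + \log u = -\log A$. The only difference is that you carry the iteration one step further to exhibit $w_A$ explicitly as $\log(-\log A)/\log A\,(1+o(1))$, which is a harmless refinement of the paper's $f = -\log A\,(1+o(1))$.
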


\begin{proof}[Proof of Lemma~\ref{lem:KyFanExp}.]
Taking the logarithm of the given expression, we have
$$
 - z/A  = \log z
$$
Since $A \to 0$, we must have $ z/\log z \to 0$ which implies $z
\to 0$. Denote $ f = z/A $, i.e. $z=Af$. Hence, the equation above
can be rewritten as
$$
-f = \log A + \log f
$$
implying that $f\to \infty$ as $A\to 0$ at the rate $f = - \log A
(1+o(1))$. Hence, the solution is $ z= -A\log A(1+o(1))$.

To show that $z  \leqslant z_* =  -A \log(A)$, we note that for
$A\leqslant e^{-1}$,
$$
\exp\{ z_*/A \} z_* = \exp\{ - \log(A)\} (-A \log(A)) = -\log(A) \geqslant 1 = \exp\{ z/A \} z
$$
implying the desired inequality.

\end{proof}


The following lemma  follows obviously  from the definition of Ky
Fan distance.

\begin{lemma}\label{lem:KyFanIneq}
If $\P( d(X,Y) > \varepsilon_1) \leqslant \varepsilon_2$ for some
$\varepsilon_1, \varepsilon_2 \in (0,1)$, then $\rho_{\rm K} (X,
Y) \leqslant \max(\varepsilon_1, \varepsilon_2)$.
\end{lemma}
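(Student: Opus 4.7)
The plan is to prove this directly from the definition of the Ky Fan metric, using monotonicity of the tail probability. Since $\rho_{\rm K}(X,Y) = \inf\{\varepsilon > 0 : \P(d(X,Y) > \varepsilon) < \varepsilon\}$, it suffices to show that every $\varepsilon > \varepsilon^\star := \max(\varepsilon_1, \varepsilon_2)$ belongs to this set, whence $\rho_{\rm K}(X,Y) \leq \varepsilon^\star$.

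Fix such an $\varepsilon > \varepsilon^\star$. Since $\varepsilon > \varepsilon_1$, the event $\{d(X,Y) > \varepsilon\}$ is contained in $\{d(X,Y) > \varepsilon_1\}$, so by the hypothesis
$$
\P(d(X,Y) > \varepsilon) \;\leq\; \P(d(X,Y) > \varepsilon_1) \;\leq\; \varepsilon_2 \;\leq\; \varepsilon^\star \;<\; \varepsilon.
$$
Hence $\varepsilon$ lies in the defining set for the Ky Fan infimum, and taking the infimum over all such $\varepsilon > \varepsilon^\star$ yields $\rho_{\rm K}(X,Y) \leq \varepsilon^\star = \max(\varepsilon_1, \varepsilon_2)$.

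There is essentially no obstacle here: the argument is a one-line consequence of monotonicity of tail probabilities together with the definition of $\rho_{\rm K}$. The only subtlety to be careful about is the strict versus non-strict inequality in the Ky Fan definition, which is handled by passing to $\varepsilon$ strictly larger than $\varepsilon^\star$ before taking the infimum.
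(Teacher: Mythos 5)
Your proof is correct, and it is exactly the argument the paper has in mind: the paper gives no proof at all, stating only that the lemma ``follows obviously from the definition of Ky Fan distance,'' and your monotonicity-of-tails argument is the obvious one being alluded to. The handling of the strict inequality by working with $\varepsilon$ strictly greater than $\max(\varepsilon_1,\varepsilon_2)$ before taking the infimum is the right way to make it airtight.
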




\begin{proof}[Proof of Lemma \ref{lem:KyFanPoisson}.]
Apply the Chernoff-Cramer bound to obtain that for all $t$ and
all $x,\varepsilon >0$,
$$
\P(||Y  - \mu  || > \varepsilon) \leqslant e^{-\varepsilon x} \E e^{x||Y  - \mu  ||}  \leqslant e^{-\varepsilon x} \E e^{x||Y  - \mu  ||_1} = e^{-\varepsilon x} \prod_t \E e^{x|Y_t  - \mu_t|}
$$
Now, $\E e^{x|Y_t  - \mu_t|} \leqslant \E e^{x(Y_t  - \mu_t)} + \E
e^{-x(Y_t  - \mu_t)}$. The cumulant function of a Poisson
random variable $Z$ with parameter $\lambda$ is $\log \E e^{ \varepsilon
Z} = \lambda [e^{\varepsilon} - 1]$; hence, for $Y_t= \tau Z$
and $\lambda =
 \mu_t/\tau$, the cumulant function of $Y_t - \mu_t$ is
\begin{eqnarray*}
c_{t}(x) &=& \log \E e^{x(Y_t  - \mu_t)} = \log \E e^{x \tau Z} - x  \mu_t =
 \frac{\mu_t}{\tau} [e^{x\tau} - 1 - x  \tau].
\end{eqnarray*}
Hence, the cumulants of the rescaled Poisson distribution are
$\kappa_{k} = \mu_t \tau^{k-1}$.
Similarly,
$$
\log \E e^{-x(Y_t  - \mu_t)} =  \frac{\mu_t}{\tau} [e^{-x\tau} - 1 + x  \tau] \leqslant c_t(x)\quad \forall x>0.
$$
 Hence, denoting $M=2 \sum_t \mu_t$, we have
$$
\P(||Y  - \mu  || > \varepsilon) \leqslant   e^{-\varepsilon x} e^{2 \sum_t c_t(x)} = \exp\{-\varepsilon x + M  [e^{x\tau} - 1 - x  \tau]/\tau\}.
$$
Since $x>0$ is arbitrary, we can take $x$ corresponding to the
minimum of the upper bound, which is achieved at $x  =\tau^{-1}\log(1 + \varepsilon /M)$, implying
$$\P(||Y  - \mu  || > \varepsilon) \leqslant
  \exp\left\{ - \frac{ \varepsilon+M}{\tau} \log\left(1 + \frac{\varepsilon}{M}\right) + \frac \varepsilon {\tau} \right\}
\leqslant  \exp\left\{ - \frac{ \varepsilon^2}{2M\tau} \left(1 - \frac{\varepsilon}{3M}\right) \right\},
$$
due to the inequality $(1+x) \log(1+x) - x \geqslant - \frac{x^2} 2 (1
-   \frac x 3)$ for small enough $x>0$. For $\varepsilon\leqslant 3M/2$ we have
\begin{eqnarray*}
\P(||Y  - \mu  || > \varepsilon) \leqslant \exp\left\{ - \frac{ \varepsilon^2}{4M\tau} \right\}.
\end{eqnarray*}
Using Lemma \ref{lem:KyFanExp}, for $ \tau\leqslant
1/(2eM)$, the solution of $\exp\{ -
\varepsilon^2/(2M\tau) \} = \varepsilon^2$ satisfies
$$
\varepsilon =  \sqrt{-2\tau M\log(2\tau M)} (1+\omega),
$$
where $\omega = o(1)$ as $\sigma \to 0$ and $\omega \leqslant 0$.

\end{proof}

\begin{proof}[Proof of Proposition \ref{th:KyFanData}.]

1. Following the rescaled Poisson example, we have that the
cumulant function for $Y_t$ is bounded by
\begin{eqnarray*}
c_{t}(x) &=& \log \E e^{x Y_t} = x\mu_t + \frac {x^2} 2  w_t \tau +  \sum_{i=3}^{\infty} \frac{x^k}{k!} \kappa_k\\
&\leqslant& x\mu_t + \frac {x^2} 2  w_t \tau +  \frac 1 {\tau }\sum_{i=3}^{\infty} \frac{(x  \tau)^k}{k!} C_t w_t\\
&=&  x\mu_t + \frac {x^2} 2  w_t \tau +  \frac {C_t w_t} {\tau }[e^{ x  \tau} - 1 -  x \tau - (x  \tau)^2/2]\\
&\leqslant& x\mu_t +  \frac {C_t w_t} {\tau }[e^{ x \tau} - 1 -  x \tau],
\end{eqnarray*}
since $C_t \geqslant 1$.
Similarly, $ \log \E e^{x Y_t} $ can be bounded in the same way.
 Hence, we have
$$
\P(||Y  - \mu  || > \varepsilon) \leqslant  e^{-\varepsilon x} e^{2 \sum_t c_t(x)}= \exp\{ -\varepsilon x + \frac {M} {\tau }[e^{ x \tau} - 1 -  x \tau] \}.
$$
where $M = 2\sum_t C_t w_t$. Now, this is the same upper bound as
for the rescaled Poisson distribution. Hence, we have the same
inequality for the Ky Fan distance.

2. Apply the Markov inequality to the random variable  $||Y - \mu
||^K$:
$$
\P(||Y - \mu || > z) \leqslant \frac{\E ||Y  -
\mu||^K}{z^K} \leqslant \frac{\E ||Y  -
\mu||_K^K }{z^K} \leqslant \frac{n \tau^{m(K)/2} L_K}{z^K}.
$$
Hence, an upper bound on the Ky Fan distance satisfies $n \tau
L_K/z^K= z$, i.e. $z = [n \tau^{m(K)/2} L_K]^{1/(K+1)} $.

\end{proof}

\begin{proof}[Proof of Lemma~\ref{lem:KyFanNormal}.] Check conditions of Proposition~\ref{th:KyFanData}. Let $\Sigma = U \Lambda U^T$ where $U$ is a rotation matrix and $\Lambda =\tau \diag(\sigma_1^2,\ldots, \sigma_p^2)$. Introduce random variable $Z = U \xi$; then it is sufficient to bound the Ky Fan distance between $Z$ and its mean $\mu_z = U\mu$ since $\rho_{\rm K}(\xi, \mu) = \rho_{\rm K}(Z, U\mu)$ due to $||\xi-\mu|| = ||Z- U \mu||$.

The cumulants of $Z_i$ are $\mu_{Z,i}$ and $\tau \sigma_i^2$, and the remaining cumulants are zero, hence the bound on the cumulants of order 2 and higher is $\tau\sigma_i^2$. Then, the conditions of the proposition are satisfied with $w_i = \sigma_i^2$ and $C_t=1$ and thus, with $M = 4\sum_i \sigma_i^2= 4 \trace(\Sigma)/\tau$, we have
$$
\rho_{\rm K}(\xi, \mu) \leq \left(- \tau M\log\{\tau M\}\right)^{1/2} = \left(- 4 \trace(\Sigma)\log\{4 \trace(\Sigma)\}\right)^{1/2},
$$
provided $\trace(\Sigma) < 1/(4e)$, which proves the lemma.

\end{proof}



\subsection{Proofs of the results in Section \ref{sec:boundary}}


\begin{lemma}\label{lem:ApproxBoundary}
Denote $\delta_b = \frac{\delta p}{2} [C_{\tilde{f},2} A^T A +\nu C_{g2}I] \mathbf{1}$, and assume that $b_i(\omega) >0$ for all $i$.

Let $x\in B_{\delta} = \{ x\in \X: \,||x-x^\star|| \leq \delta \}$, $x^\star =0$.
Then, for small enough $\delta$ and $\nu$, we have the following bounds:
\begin{eqnarray*}
h_y(x) - h_y(x^\star) &\leq& (b(\omega) + \delta_b)^T (x-x^\star),\\
h_y(x) - h_y(x^\star)  &\geq& (b(\omega) - \delta_b)^T (x-x^\star).
\end{eqnarray*}

\end{lemma}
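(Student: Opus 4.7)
The plan is to perform a second-order Taylor expansion of $h_y$ around $x^\star = 0$ and control the quadratic remainder entry-wise using the smoothness bounds in \eqref{eq:BoundedDerG2}. Since $x^\star = 0$ and $x \in B(x^\star,\delta)\cap \X$, the segment $[0,x]$ is contained in $B(x^\star,\delta)\cap \X$, so Taylor's theorem with Lagrange remainder gives a point $x_c$ on that segment with
\[
h_y(x) - h_y(0) \;=\; \nabla h_y(0)^T x \;+\; \tfrac{1}{2}\, x^T \nabla^2 h_y(x_c)\, x \;=\; b(\omega)^T x \;+\; R,
\]
using $\nabla h_y(0) = b(\omega)$. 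Both displayed inequalities of the lemma are therefore equivalent to the single two-sided estimate $|R| \le \delta_b^T x$.

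Next, I would decompose $\nabla^2 h_y(x_c) = A^T \nabla^2 \tilde f_y(A x_c)\, A + \nu \nabla^2 g(x_c)$ and invoke the entry-wise bounds $|[\nabla^2 \tilde f_y(Ax_c)]_{kl}|\le C_{\tilde f,2}$ and $|[\nabla^2 g(x_c)]_{ij}|\le C_{g,2}$ from \eqref{eq:BoundedDerG2}. Because $x\in \X = [0,\infty)^p$ with $\|x\|\le \delta$, every coordinate satisfies $0\le x_i\le \delta$, so the elementary inequality $x_i x_j \le \delta\, x_j$ converts a quadratic form in $x$ into a linear one. Applied to the two pieces of $R$, this yields estimates of the shape
\[
\tfrac{1}{2}\bigl|x^T A^T \nabla^2 \tilde f_y(A x_c) A\, x\bigr| \;\le\; \tfrac{\delta p}{2}\, C_{\tilde f,2}\, \mathbf 1^T A^T A\, x,
\qquad
\tfrac{\nu}{2}\bigl|x^T \nabla^2 g(x_c)\, x\bigr| \;\le\; \tfrac{\nu \delta p}{2}\, C_{g,2}\, \mathbf 1^T x,
\]
whose sum is exactly $\delta_b^T x$ by the definition of $\delta_b$. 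Substituting into the Taylor expansion then delivers the required upper and lower bounds simultaneously.

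The main obstacle is the first of these two estimates: the bound $|[\nabla^2 \tilde f_y]_{kl}|\le C_{\tilde f,2}$ cannot simply be pulled outside the two multiplications by $A$ to recover $A^T A$, because the entries of $A^T M A$ are signed linear combinations of entries of $M$ and $A$. The route around this is to use $|x_i x_j|\le \delta x_j$ together with $\sum_j |(A^T M A)_{ij}| \le p\, C_{\tilde f,2}\, (A^T A\,\mathbf{1})_i$, a crude majorisation which follows by inserting absolute values, bounding $|M_{kl}|\le C_{\tilde f,2}$, and using $\|v\|_1\le \sqrt{p}\,\|v\|_2$ to trade the entry-wise $A^T|M|A$ bound for the PSD matrix $A^T A$ at the cost of a factor $p$. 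The prior term is analogous with $A=I$. Once these two reductions are in place the algebra closes cleanly and no further structure of the likelihood or prior is needed.
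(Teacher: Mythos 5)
Your architecture is the same as the paper's: expand $h_y$ to second order around $x^\star=0$, reduce both inequalities to the single two-sided estimate $|R|\le \delta_b^T x$, split the Hessian into likelihood and prior parts, and convert the quadratic remainder into a linear form using $0\le x_j\le\delta$ together with an $\ell_1$--$\ell_2$ comparison that produces the factor $p$. The prior term and the overall reduction are handled exactly as in the paper's proof.

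The gap is in your route through what you yourself flag as the main obstacle. The majorisation $\sum_j|(A^TMA)_{ij}|\le p\,C_{\tilde f,2}(A^TA\mathbf 1)_i$ is false: the left-hand side is a sum of absolute values, while the right-hand side is a signed quantity built from column sums of $A^TA$, and no application of $\|v\|_1\le\sqrt p\,\|v\|_2$ turns the entrywise bound $|(A^TMA)_{ij}|\le C_{\tilde f,2}\sum_{k,\ell}|A_{ki}|\,|A_{\ell j}|$ into it. It already fails for entrywise nonnegative $A$: take $A=\diag(\epsilon,1)$ with $|M_{12}|=C_{\tilde f,2}$ and $\epsilon$ small, so that the left side at $i=1$ is at least $\epsilon\,C_{\tilde f,2}$ while $p\,C_{\tilde f,2}(A^TA\mathbf 1)_1=2C_{\tilde f,2}\epsilon^2$. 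The paper uses the same ingredients but in the reverse order, which avoids this step entirely: it first bounds the quadratic form as a whole, $|(Ax)^TM(Ax)|\le C_{\tilde f,2}\|Ax\|_1^2\le p\,C_{\tilde f,2}\|Ax\|_2^2=p\,C_{\tilde f,2}\,x^TA^TAx$, and only then linearises via $x^TA^TAx\le\delta\,\mathbf 1^TA^TAx$, using $0\le x_j\le\delta$ together with entrywise nonnegativity of $A^TA$. You should replace your row-sum majorisation with this ``quadratic form first, linearise last'' ordering. Note that the final linearisation still requires a sign condition on the entries of $A^TA$ (e.g.\ $A$ entrywise nonnegative, as in the tomography application); this assumption is also tacit in the paper's own proof, and without it $\delta_b$ itself can have negative coordinates, so the two-sided bound of the lemma could not hold in the stated form.
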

\begin{proof}

Approximate $ h_y(x )$ by a linear function using Taylor decomposition in a neighbourhood of $x^\star$:
\begin{eqnarray*}
 h_y(x ) &=& h_y(x^\star) + [\nabla h_y(x^\star)]^T (x -x^\star) +   \Delta_{00}(x ).
\end{eqnarray*}
Similarly to the proof of Lemma~\ref{lem:Approx}, bound $\Delta_{00}$ for $w = x-x^\star \in B(0,\delta) \cap (\X - x^\star)$ using Taylor decomposition of $h_y(x )$: $\exists x_c \in \langle x , x^\star\rangle$:
\begin{eqnarray*}
| \Delta_{00}(\delta)|  &=& \left|\frac 1 2   \sum_{ij} \nabla_{ij} h_y(x_c) (x_i-x^\star_{i})(x_j-x^\star_{j}) \right| \\
   &=& \left|\frac 1 2   \sum_{ij} [ \sum_{k\ell } A_{k i} A_{\ell j} \nabla_{k\ell }\tilde{f}_y(x_c) + \nabla_{ij} g(x_c)] (x_i-x^\star_{i})(x_j-x^\star_{j}) \right| \\
&\leq& \frac {1} 2   \left[   C_{\tilde{f},2}   ||A (x-x^\star)||_1^2    + \nu   C_{g2}  ||x-x^\star||_1^2 \right]\\
&\leq& \frac { p} 2   (x-x^\star)^T \left( C_{\tilde{f},2}   A^T A   + \nu C_{g2} I\right) (x-x^\star)\\
&\leq& \frac { \delta p} 2   (x-x^\star)^T \left( C_{\tilde{f},2}   A^T A   + \nu C_{g2} I\right) \mathbf{1} = \delta_b^T (x-x^\star),
\end{eqnarray*}
since $x_i-x^\star_i \in [0,\delta]$.

\vspace{0.1cm}

Thus, we obtain an upper bound
\begin{eqnarray*}
h_y(x) - h_y(x^\star)  &\leq& (b+\delta_b)^T (x-x^\star)
\end{eqnarray*}
and the lower bound:
\begin{eqnarray*}
h_y(x) - h_y(x^\star)  &\geq& (b-\delta_b)^T (x-x^\star).
\end{eqnarray*}

\end{proof}

\begin{proposition}\label{prop:LaplaceApproxAsymptExt}
Let assumptions on $f_y, \, g$ and $\delta$ in Section \ref{sec:AssumeExt} hold.

Assume that $x^\star=0$, $b_i = \nabla_i h_y(x^\star) >0$ for all $i$, and that $\gamma\to 0$ and $\nu \to 0$ as $\tau \to 0$.


Then, for any $\varepsilon \in (0, \delta)$,   such that
  $b_{\min} \varepsilon/\tau \to \infty$,
\begin{eqnarray*}
  && \frac{\int_{\X \setminus B(x^\star, \varepsilon)} e^{-  h_y(x)/\tau} dx}{\int_{\X} e^{-  h_y(x)/\tau} dx} \leqslant
     p   e^{- \bar{b}_{\min}  \varepsilon/(\sqrt{p}\tau) } \frac{1+\Delta_1 }{1+\Delta_0} + \frac{\Delta_0}{1+\Delta_0},
\end{eqnarray*}
and, in particular,
\begin{eqnarray*}
\int_{B(x^\star, \delta)} e^{- [ h_y(x)- h_y(x^\star)]/\tau} dx \geq \tau^{p }  \prod_i b_{i}^{-1} [1+\widetilde\Delta_3],
\end{eqnarray*}
where $\Delta_1$ and $\widetilde\Delta_3$ are defined by
\begin{eqnarray}\label{eq:DefDelta13}
 \Delta_1(\delta, b) &=& -1 +  \prod_i \frac{b_i - \delta_{b,i}}{b_i + \delta_{b,i}} \,\,  \left[   1- e^{-  \min_i \widetilde{b}_{i} \delta/(\sqrt{p}\tau)}  \right]^{-p},
\end{eqnarray}
\begin{eqnarray}\label{eq:defD3b}
  \widetilde\Delta_3(\delta, y) &=&  -1 +  \left[1 +  \max_i \delta_{b, \, i}/b_i\right]^{-p} \left[   1- e^{-  \min_i \widetilde{b}_{i} \delta/(\sqrt{p}\tau)}  \right]^{p}.
 \end{eqnarray}

\end{proposition}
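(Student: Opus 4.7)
The plan is to mimic the structure of the proof of Proposition~\ref{prop:LaplaceApproxAsympt} but with linear rather than quadratic control of $h_y$ on $B(x^\star,\delta)$, reflecting the fact that at the boundary point $x^\star=0$ the gradient $b(\omega)=\nabla h_y(x^\star)$ does not vanish. Concretely, I will use Lemma~\ref{lem:ApproxBoundary} to sandwich $h_y(x)-h_y(x^\star)$ between $(b-\delta_b)^T(x-x^\star)$ and $(b+\delta_b)^T(x-x^\star)$ for $x\in B(x^\star,\delta)$, and then exploit that on $\X=[0,\infty)^p$ every $x-x^\star=x$ has nonnegative coordinates, so that the exponential of a linear form factorises coordinatewise and is integrable in closed form.

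For the lower bound on $\int_{B(x^\star,\delta)} e^{-[h_y(x)-h_y(x^\star)]/\tau}dx$, I first note that the axis-aligned cube $C_\delta=[0,\delta/\sqrt{p}]^p$ is contained in $B(x^\star,\delta)\cap\X$. Using the upper linear bound from Lemma~\ref{lem:ApproxBoundary},
\begin{eqnarray*}
\int_{B(x^\star,\delta)} e^{-[h_y(x)-h_y(x^\star)]/\tau}\,dx
&\geq& \int_{C_\delta} e^{-(b+\delta_b)^T x/\tau}\,dx \\
&=&\prod_{i=1}^p \frac{\tau}{b_i+\delta_{b,i}}\Bigl[1-e^{-(b_i+\delta_{b,i})\delta/(\sqrt{p}\tau)}\Bigr],
\end{eqnarray*}
which, after pulling $\prod b_i^{-1}$ in front, produces exactly the correction factor $1+\widetilde\Delta_3$ in \eqref{eq:defD3b}. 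Under the assumptions of Section~\ref{sec:AssumeExt}, $\delta_{b,i}/b_i\to0$ and $b_{\min}\delta/(\sqrt{p}\tau)\to\infty$, so $\widetilde\Delta_3=o(1)$.

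For the upper bound on the tail, I split $\int_{\X\setminus B(x^\star,\varepsilon)}=\int_{B(x^\star,\delta)\setminus B(x^\star,\varepsilon)}+\int_{\X\setminus B(x^\star,\delta)}$ exactly as in the interior proof, so the second piece is absorbed into $\Delta_0/(1+\Delta_0)$. For the first piece I apply the lower linear bound from Lemma~\ref{lem:ApproxBoundary}, which makes $e^{-[h_y(x)-h_y(x^\star)]/\tau}\le e^{-(b-\delta_b)^T x/\tau}$. The key geometric step is that $\{x\in[0,\infty)^p:\|x\|\ge\varepsilon\}\subseteq\bigcup_{i=1}^p\{x\in[0,\infty)^p:x_i\ge\varepsilon/\sqrt{p}\}$, so by the union bound
\begin{eqnarray*}
\int_{\{x\in\X:\|x\|\ge\varepsilon\}} e^{-(b-\delta_b)^T x/\tau}\,dx
&\le& \sum_{i=1}^p \frac{\tau\,e^{-(b_i-\delta_{b,i})\varepsilon/(\sqrt{p}\tau)}}{b_i-\delta_{b,i}}\prod_{j\ne i}\frac{\tau}{b_j-\delta_{b,j}}\\
&\le& p\,\tau^p\prod_j (b_j-\delta_{b,j})^{-1}\,e^{-\bar b_{\min}\varepsilon/(\sqrt{p}\tau)}.
\end{eqnarray*}
Dividing by the lower bound obtained above gives the stated constant $p(1+\Delta_1)$ multiplying $e^{-\bar b_{\min}\varepsilon/(\sqrt{p}\tau)}$, with $\Delta_1$ as in \eqref{eq:DefDelta13}; the ratio of denominators from the two parts of the split is handled exactly as in Proposition~\ref{prop:LaplaceApproxAsympt}, producing the overall $(1+\Delta_1)/(1+\Delta_0)$ factor plus the $\Delta_0/(1+\Delta_0)$ remainder.

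The main technical subtlety, and the only place one has to be careful, is the interaction of the Euclidean ball/annulus with the orthant: in the lower bound I must replace the ball by an inscribed cube so that the integral factorises, while in the upper bound I must inflate the annulus to the union of half-spaces $\{x_i\ge\varepsilon/\sqrt{p}\}$. Both replacements cost only a factor depending polynomially on $p$ (absorbed into the $p$ and $\sqrt{p}$ in the statement), and both are harmless because $b_{\min}\varepsilon/\tau\to\infty$ kills the exponential in the tail while $b_{\min}\delta/(\sqrt{p}\tau)\to\infty$ makes the bracket $[1-e^{-\cdots}]$ in the lower bound tend to $1$, driving $\Delta_1$ and $\widetilde\Delta_3$ to zero as $\tau\to0$. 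No further analytic ingredient is needed beyond the one-dimensional identity $\int_0^a e^{-cx}\,dx=c^{-1}(1-e^{-ca})$.
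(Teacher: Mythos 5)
Your proposal is correct and follows essentially the same route as the paper's own proof: a linear sandwich of $h_y(x)-h_y(x^\star)$ via Lemma~\ref{lem:ApproxBoundary}, the inscribed cube $[0,\delta/\sqrt{p}]^p$ for the lower bound, the union bound $\{\|x\|\ge\varepsilon\}\subseteq\bigcup_i\{x_i\ge\varepsilon/\sqrt{p}\}$ for the tail, and the same $\Delta_0$ split of $\X\setminus B(x^\star,\varepsilon)$. The resulting constants match \eqref{eq:DefDelta13} and \eqref{eq:defD3b}, so no further comment is needed.
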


\begin{proof}[Proof of Proposition \ref{prop:LaplaceApproxAsymptExt}.]


Making the change of variables $v =  (x-x^\star)/\tau$ with Jacobian $J = \tau^{p}$, we have
\begin{eqnarray*}
&&\int_{B(x^\star, \delta)\cap \X} e^{-  [h_y(x) - h_{y\exact}(x)]/\tau} dx \geq \tau^{p}
\int_{ B(0, \delta/\tau) \cap (\X - x^\star)} \exp\left\{ - (b+ \delta_b)^T v \right\} dv \\
 & \geq&  \tau^{p} \int_{ [0, \delta/(\sqrt{p}\tau)]^p} \exp\left\{ - \tilde{b}^T v \right\} dv =  \tau^{p} \prod_i \tilde{b}_i^{-1}  \prod_i \left[1- \exp\left\{ - \tilde{b}_i \delta/(\sqrt{p}\tau)  \right\} \right]\\
 &\geq& \tau^{p} \prod_i b_{i}^{-1}   [1+\widetilde{\Delta}_3]
 \end{eqnarray*}
with $\widetilde\Delta_3$ defined by (\ref{eq:defD3b}).
The error $\widetilde\Delta_3 \to 0$ as $\tau \to 0$, since $\delta \to 0$ and $b_{\min} \delta/\tau \to \infty$,  with   $\P_{y\exact}$  probability $\to 1$.


Similarly, we obtain an upper bound on the following integral:
\begin{eqnarray*}
&\int_{(\X \cap B(x^\star, \delta)) \setminus B(x^\star, \varepsilon)}& e^{-  [h_y(x)-h_y(x^\star)]/\tau} dx \leq
\tau^p \int_{ \varepsilon/\tau  \leq   ||v || \leq \delta/\tau , \, v_i \geq 0} \exp\left\{  -  \bar{b}^T v  \right\} dv\\
&\leq&  \tau^p \int_{ ||v|| \geq \varepsilon/\tau  } \exp\left\{  -  \bar{b}^T v  \right\} dv\\
&\leq& \sum_i \tau^p \int_{v_i \geq  \varepsilon/(\sqrt{p}\tau), \, v_j\geq 0 \, \forall j } \exp\left\{  -  \bar{b}^T v  \right\} dv\\
&=& \tau^p \prod_i \bar{b}_i^{-1} \sum_i  \exp\{ - \bar{b}_i \varepsilon/(\sqrt{p}\tau) \} \\
&\leq& p \tau^p \prod_i \bar{b}_i^{-1}  \exp\{ - \min_i \bar{b}_i \varepsilon/(\sqrt{p}\tau) \},
\end{eqnarray*}
where $\bar{b} = b - \delta_b$.
Assume that $\delta$ is small enough so that $\bar{b}_i >0$ for all $i$.

Therefore,
\begin{eqnarray*}
&&\frac{\int_{B(x^\star, \delta) \setminus B(x^\star, \varepsilon)} e^{-  h_y(x)/\tau} dx}{\int_{B(x^\star, \delta)} e^{-  h_y(x)/\tau} dx}
\leq    p  e^{- \bar{b}_{\min}   \,\varepsilon/(\sqrt{p}\tau) } ( 1+\Delta_1(\delta , b)) ,
 \end{eqnarray*}
 where
\begin{eqnarray*}
 \Delta_1(\delta, b) &=& -1 +  \prod_i \frac{b_i - \delta_{b,i}}{b_i + \delta_{b,i}} \, \frac{ \left[   1- e^{-  \min_i \widetilde{b}_{i} \delta/(\sqrt{p}\tau)}  \right]^{p}}{p \, \exp\{ - \min_i \bar{b}_i \varepsilon/(\sqrt{p}\tau) \}}.
  \end{eqnarray*}
Hence, $\Delta_1$ is small if $b_{\min} \delta /\tau\to \infty$ as $\tau \to 0$.



Now we take into account the error of approximating the integral over $\X$ by the integral over $B(x^\star, \varepsilon)$:
\begin{eqnarray*}
\frac{\int_{\X \setminus B(x^\star, \varepsilon)} e^{-  h_y(x)/\tau} dx}{\int_{\X} e^{-  h_y(x)/\tau} dx}& =&
\frac{\int_{\X \cap B(x^\star, \delta) \setminus B(x^\star, \varepsilon)} e^{-  h_y(x)/\tau} dx + \int_{\X \setminus B(x^\star, \delta) } e^{-  h_y(x)/\tau} dx}{\int_{B(x^\star, \delta) \cap \X  } e^{-  h_y(x)/\tau} dx + \int_{\X \setminus B(x^\star, \delta) } e^{-  h_y(x)/\tau} dx}\\
&=&
\frac{\int_{\X \cap  B(x^\star, \delta) \setminus B(x^\star, \varepsilon)} e^{-  h_y(x)/\tau} dx  } {(1+\Delta_0)\int_{B(x^\star, \delta)  \cap \X} e^{-  h_y(x)/\tau} dx } + \frac{\Delta_0}{1+\Delta_0}.
\end{eqnarray*}
Thus, we have the required statement.

\end{proof}


\begin{proof}[Proof of Theorem  \ref{th:ProkhUpperExt}.]

We proceed similarly as in the proof of Theorem  \ref{th:ProkhUpperInt}.

By Strassen's theorem, for any $x$, $\rho_{\rm P} (\mu_{\rm post}(\omega), \delta_{x }) =  \rho_{\rm K} (\xi,
 x)$ where $\xi \sim \mu_{\rm post}(\omega)$. Hence, we find an upper bound on the Ky Fan distance between $\xi$ and $x^\star$.

 Using Proposition
\ref{prop:LaplaceApproxAsymptExt}, we have an upper bound $\varepsilon$ on the Ky Fan distance satisfies
\begin{eqnarray*}
  \frac{\int_{B(x^\star, \delta) \setminus B(x^\star, \varepsilon)} e^{-  h_y(x)/\tau} dx}{\int_{B(x^\star, \delta)} e^{-  h_y(x)/\tau} dx}
  &\leqslant&
\left[\widetilde\Delta_0 + p \exp\left\{  - \frac{\varepsilon  \bar{b}_{\min}  }{\sqrt{p}\tau } \right\}\frac{ 1+\Delta_1 }{1 + \Delta_0} \right] \leq  \varepsilon,
\end{eqnarray*}
where $\widetilde\Delta_0 = \Delta_0/(1+ \Delta_0)$. 

An upper bound on the Ky Fan distance is the smallest $\varepsilon >0$ such that
\begin{eqnarray*}
\widetilde\Delta_0  &\leq& \varepsilon, \\
 p \exp\left\{  - \frac{\varepsilon \bar{b}_{\min} }{\sqrt{p}\tau } \right\} \frac{ 1+\Delta_1 }{1 + \Delta_0}  &\leq&  \varepsilon .
\end{eqnarray*}

The last  inequality implies that as $\tau/\bar{b}_{\rm min} \to 0$, $\varepsilon \to 0$.
Hence, using Lemma \ref{lem:KyFanExp},
we have that
\begin{eqnarray*}
\varepsilon &\leq& - \frac{\tau \sqrt{p}}{\bar{b}_{\min} }\left[ \log\left( \frac{\tau}{ \sqrt{p}\, \bar{b}_{\min}}\right) - \log\left( \frac{ 1+\Delta_1 }{1 + \Delta_0}\right) \right].
\end{eqnarray*}

Therefore, the Ky Fan distance is bounded from above by the maximum of the two expressions:
\begin{eqnarray*}
\hspace{-1.2cm}\rho_{\rm P}(\mu_{\rm post}(\omega), \delta_{x^\star})
 &\leqslant&   \max\left\{   \frac{\Delta_0}{1+\Delta_0}, \quad
- \frac{\tau \sqrt{p}}{\bar{b}_{\min} }\left[ \log\left( \frac{\tau}{ \sqrt{p}\, \bar{b}_{\min}}\right) - \log\left(\frac{ 1+\Delta_1 }{1 + \Delta_0}\right) \right]  \right\}\\
&=&  \max\left\{   \frac{\Delta_0}{1+\Delta_0 }, \quad
- \frac{\tau \sqrt{p}}{\bar{b}_{\min}(\omega) }  \log\left( \frac{\tau}{ \sqrt{p}\, \bar{b}_{\min}(\omega)}\right)  (1+ \Delta_{4}(\delta, Y(\omega)))\right\},
\end{eqnarray*}
where  $\Delta_0=\Delta_0(B(0,\delta))$ is defined by (\ref{eq:DefD0}) and  $\Delta_{4}$ is defined by
\begin{eqnarray}\label{eq:DefDeltaStar}
\Delta_{4}(\delta,   y) &=& \frac{\log\left( (1+\Delta_1)/(1+ \Delta_0)\right)}{\log\left(  \sqrt{p} \,\bar{b}_{\min}(\omega) /\tau \right)}.
\end{eqnarray}


\end{proof}


\begin{proof}[Proof of Theorem  \ref{th:KyFanPostExt}. ]
Now we prove Theorem \ref{th:KyFanPostExt} in the notation defined in the proof of Theorem  \ref{th:ProkhUpperExt}.

We apply Theorem \ref{lem:KyFanDiffBounds} with  $\Omega_1
=\{\omega: \, ||Y(\omega)-y\exact||\leq \rho_{\rm K}(Y,
y\exact)\}$ and $\Omega_2 = \Omega \setminus \Omega_1$ with
$\P(\Omega_2)\leq \rho_{\rm K}(Y, y\exact)$ by the definition of
Ky Fan distance, with the bounds given in Theorem~\ref{th:KyFanPostExt} which we modify to
depend on $y$ only via $||y - y\exact||$. For small enough $\tau$, given that $b_i^\star >0$,
the assumption of the theorems that  $b_i>0$  holds on $\Omega_1$ for small enough $\tau$, as we
shall show below.

The upper bound depends on $y$ via $||y - y\exact||$,
$b(\omega)$,  $\Delta_0$ and $\Delta_1$.


We have that, on $\Omega_1$,
\begin{eqnarray*}
b_i &=&\sum_{j} A_{ji} \nabla_j  \tilde{f}_y(A x^\star) +\nu \nabla_i g(x^\star)\\
 &\geq& \sum_{j} A_{ji} [\nabla_j  \tilde{f}_{y\exact}(A x^\star) - M_{\tilde{f},1} \rho_{\rm K}(Y, y\exact)] +\nu \nabla_i g(x^\star)\\
 &=& b_i^\star -  \rho_{\rm K}(Y, y\exact) M_{\tilde{f},1} \sum_{j} A_{ji},
\end{eqnarray*}
and also
\begin{equation}\label{eq:BLow}
b_i - \delta_{b,i}\geq b_{i}^\star -  [\rho_{\rm K}(Y, y\exact) M_{\tilde{f},1} + \delta p C_{\tilde{f},2} ||A||_{1,1}/2] \sum_{j} A_{j,i} - \nu \delta p C_{g2}/2, 
\end{equation}
Note that if $\sum_{j} A_{j,i} =0$, then $b_i - \delta_{b,i} = \nu [\nabla_i g(x^\star) -\delta p C_{g2}/2]$, i.e. the leading term in the lower bound is of order $\nu$. If $\sum_{j} A_{j,i} \neq 0$, then the leading term in the lower bound is a positive constant $\sum_{j} A_{ji} \nabla_j  \tilde{f}_y(A x^\star)$.
Denote $i^\star = \arg \min_{i} b_{i}^\star$ and assume that $\tau$ and $\delta$ are small enough so that the minimum of the lower bound in \eqref{eq:BLow} is also achieved at $i^\star$.
Introduce $\Delta_{11}$ such that
$$
\Delta_{11} = \left[ \rho_{\rm K}(Y, y\exact)   M_{\tilde{f},1} +  0.5\delta p C_{\tilde{f},2} ||A||_{1,1}\right]\frac{ \sum_{j} A_{j i^\star}}{b_{\min}^\star}  + \delta \frac{\nu C_{g,2}}{2 b_{\min}^\star}.
$$
If $\sum_{j} A_{j,i^\star} = 0$.
$$
\Delta_{11} =   \delta \frac{ \nu C_{g,2}}{2 b_{\min}^\star} =  \delta \frac{ C_{g,2}}{2 \nabla_{i^\star} g(x^\star)}.
$$
 Then, an upper bound on the Ky Fan distance is given by
\begin{eqnarray*}
\varepsilon &\leq& - \frac{\tau \sqrt{p}}{\bar{b}_{\min} } \log\left( \frac{\tau }{  \sqrt{p}  \, \bar{ b}_{\min}}\right) \left(1+ \Delta_4 \right)\\
 &\leq& - \frac{\tau \sqrt{p}}{b_{\min}^\star[1 - \Delta_{11}] }\log\left( \frac{\tau }{  \sqrt{p}\,  b_{\min}^\star [1 - \Delta_{11}] }\right)\left[ 1+  \Delta_4^\star  \right],
\end{eqnarray*}
since the function $ - x \log x$ increases for $x<1/e$.
This bound  on $\varepsilon $ on $\Omega_1$ is independent of $y$.
The error term $\Delta_4^\star$ is given by
$$
\Delta_4^\star = \frac{\log\left( (1+\Delta_1^\star)/(1+ \Delta_0^\star)\right)}{\log\left(  \sqrt{p} \,b^\star_{\min}[1 - \Delta_{11}] /\tau \right)}
$$
Using the lifting Theorem \ref{lem:KyFanDiffBounds}, we have that, for small enough $\tau, \nu$,
\begin{eqnarray*}
\rho_{\rm K}(\mu_{\rm post}, \delta_{x^\star}) &\leq& \max\left\{2\rho_{\rm K}(Y, y\exact),\,\, \Delta_0^\star, \,\, - \frac{\tau \sqrt{p}}{ b_{\min}^\star } \log\left( \frac{\tau  }{  \sqrt{p}  b_{\min}^\star }\right)(1 + \Delta_5^\star) \right\},
\end{eqnarray*}
where
\begin{eqnarray*}
\Delta_5^\star &=& -1+\frac{ 1+\Delta_4^\star }{1 - \Delta_{11}} \left(1 -    \frac{ \log (1 - \Delta_{11}) }{\log\left( \frac{\tau }{  \sqrt{p}\,  b_{\min}^\star   }\right)} \right) .
\end{eqnarray*}
Thus, we have the statement of Theorem \ref{th:KyFanPostExt}.

\end{proof}


\subsection{Auxiliary results}


\begin{lemma}\label{lem:UpperBoundXG_gen}
Under the setup of Section~\ref{sec:SectionSetup}, under assumptions that $[A^T V_y(x) A : B(x)]$ is of full rank and $x^\star$ is an interior point of $\X$,
\begin{eqnarray*}
\hspace{-1.5cm}||H^{-1}_y(x)|| &=& [\min ( \lambda_{\min,\, \PA}(A^T V_y(x ) A + \nu B(x))  , \nu \lambda_{\min, \, I-\PA}(B(x))  )]^{-1}\\
\hspace{-1.5cm} ||H_y(x^\star)^{-1}\nabla h_y(x^\star)|| &\leqslant& \frac{ ||\PA \,
\nabla f_y(x^\star)|| + \nu  ||\PA \, \nabla g(x^\star) ||}{ \lambda_{\rm min, \PA}(A^T V_y(x^\star) A  +  \nu B(x^\star))},
\end{eqnarray*}
where $\lambda_{\min, \,  P}(B(x )) = \min_{||v||=1, \, Pv=v} ||B(x ) v||$ is the smallest eigenvalue of $B(x)$ on the range of $P$.
\end{lemma}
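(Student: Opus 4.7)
The proof naturally splits along the orthogonal decomposition $\mathbb{R}^p = \mathrm{Range}(A^T) \oplus \ker(A)$ induced by $P_A$ and $I - P_A$. My plan is to exploit the fact that the two pieces of $H_y = A^T V_y A + \nu B$ behave very differently on these two subspaces: the data term $A^T V_y A$ annihilates $\ker(A)$ (because $Av = 0$ for $v \in \ker(A)$) and outputs only into $\mathrm{Range}(A^T)$, while the prior term $\nu B(x)$ acts generally.

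For the first identity, I would use the variational characterization $\|H_y(x)^{-1}\| = 1/\lambda_{\min}(H_y(x))$ and split any unit vector as $v = v_0 + v_1$ with $v_0 = P_A v$ and $v_1 = (I-P_A)v$. Since $A v_1 = 0$, the quadratic form reduces to
\begin{equation*}
v^T H_y(x) v \;=\; v_0^T A^T V_y(x) A\, v_0 + \nu\, v^T B(x) v.
\end{equation*}
Restricting to $v \in \mathrm{Range}(A^T)$ (i.e.\ $v_1 = 0$) gives $v_0^T(A^T V_y A + \nu B)v_0$, whose minimum over unit vectors is $\lambda_{\min,P_A}(A^T V_y A + \nu B)$; restricting to $v \in \ker(A)$ gives $\nu v_1^T B(x) v_1$, whose minimum is $\nu\,\lambda_{\min,I-P_A}(B)$. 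Taking the minimum of the two cases and inverting yields the first statement, since the positive definite matrix $H_y$ achieves its smallest eigenvalue on one of the two natural subspaces.

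For the second inequality, the key observation is that $\nabla h_y(x^\star) \in \mathrm{Range}(A^T)$. This is because $\nabla f_y(x) = A^T \nabla \tilde{f}_y(Ax)$ is automatically in $\mathrm{Range}(A^T)$, while $\nabla g(x^\star)$ lies in $\mathrm{Range}(A^T)$ by the Lagrange condition for the constrained minimization defining $x^\star$ (with $x^\star$ an interior point of $\X$, as the gradient of $g$ must be normal to the affine constraint set $\{x: Ax = Ax\true\}$). Hence $P_A \nabla h_y(x^\star) = \nabla h_y(x^\star)$, and the triangle inequality gives
\begin{equation*}
\|\nabla h_y(x^\star)\| \;\leq\; \|P_A \nabla f_y(x^\star)\| + \nu \|P_A \nabla g(x^\star)\|.
\end{equation*}
Setting $w = H_y(x^\star)^{-1} \nabla h_y(x^\star)$, I would then use $\|\nabla h_y(x^\star)\|^2 = w^T H_y(x^\star)^2 w$ together with the first part (restricted to $\mathrm{Range}(A^T)$, where the source $\nabla h_y(x^\star)$ lives) to bound $\|w\|$ by $\|\nabla h_y(x^\star)\|/\lambda_{\min,P_A}(A^T V_y A + \nu B)$.

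The main obstacle is handling the possible cross-coupling of $B(x^\star)$ between $\mathrm{Range}(A^T)$ and $\ker(A)$: because $H_y$ need not be block-diagonal in the $P_A/(I-P_A)$ decomposition, the component of $w$ in $\ker(A)$ does not vanish even though the source lies in $\mathrm{Range}(A^T)$, and a literal restriction to $\mathrm{Range}(A^T)$ leads to a Schur complement rather than the plain submatrix. I would reconcile this by verifying that the component of $w$ in $\ker(A)$ is driven by the off-diagonal block $P_A B (I-P_A)$ applied to $P_A w$ and is therefore bounded by the same $\lambda_{\min,P_A}$ quantity (up to factors already implicit in the spectral-norm bound), so that the stated inequality remains valid; this step is essentially a careful use of Cauchy--Schwarz combined with the positive-definiteness of $B$ on $\ker(A)$ guaranteed by the full-rank hypothesis on $[A^T V_y A : B(x)]$.
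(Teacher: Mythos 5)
Your proposal follows essentially the same route as the paper: the same $\PA$ versus $I-\PA$ decomposition for the first identity, and for the second the same two observations that $\nabla f_y(x^\star)=A^T\nabla\tilde f_y(Ax^\star)$ lies in the range of $\PA$ and that $(I-\PA)\nabla g(x^\star)=0$ by the Karush--Kuhn--Tucker conditions at the interior minimiser $x^\star$, so that only the $\PA$-block of $H_y^{-1}$ contributes. The cross-coupling issue you flag at the end is handled in the paper by invoking its Lemma~\ref{lem:Interlace}(i) rather than a Schur-complement argument, but this is a difference of bookkeeping, not of approach.
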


\begin{proof}[Proof of Lemma~\ref{lem:UpperBoundXG_gen}.]

The norm of $H^{-1}$ is given by
\begin{eqnarray*}
\hspace{-1.5cm}||H^{-1}|| &=&   [\lambda_{\min}(A^T V A  +
\nu  B)]^{-1} =   [\min_{||x||=1}
||(A^T V A + \nu  B)x||]^{-1}\\
&=&  [\min_{||x||=1}
||(A^T V A + \nu  B)\PA x +\nu  B )(I-\PA) x ||]^{-1}\\
&=& [\min(\min_{||x||=1, \PA x=x}
||(A^T V A + \nu  B)\PA x||, \min_{||x||=1, (I-\PA)x=x} \nu  ||B (I-\PA) x || )]^{-1}\\
&=& [\min ( \lambda_{\min,\, \PA}(A^T V A +
\nu  B) , \nu
\lambda_{\min, \, I-\PA}(B)  )]^{-1}.
\end{eqnarray*}
Note that since we assumed that $V_{y}(x^\star)$ is of full
rank with high probability, the projection on the range of $A^T$ coincides with the
projection on the range of $A^T V_{y}(x^\star) A$ (Lemma~\ref{lem:Interlace}, (iii)).

Now we find an upper bound on $||H_{y\exact}(x^\star)^{-1}\nabla
h_y(x^\star)||$ using the first statement in
Lemma~\ref{lem:Interlace}:
\begin{eqnarray*}
\hspace{-1.5cm} ||H_{y\exact}(x^\star)^{-1}\nabla
h_y(x^\star)|| &=&  ||H_{y\exact}(x^\star)^{-1} \left(\nabla f_y(x^\star) + \nu  \nabla g(x^\star)
\right)||\\ &\leqslant&    ||H_{y\exact}(x^\star)^{-1}||_{\PA}
||\PA(\nabla f_y(x^\star) + \nu  \nabla g(x^\star))
||\\
&+& ||H_{y\exact}(x^\star)^{-1}||_{I-\PA}
||(I-\PA)[\nabla f_y(x^\star) + \nu
\nabla g(x^\star)]
|| \\
&\leqslant& \frac{||\PA[\nabla f_y(x^\star) +
\nu  \nabla g(x^\star) ]||}{\lambda_{\min, \PA}(A^T V_{y\exact}(x^\star) A +
\nu  B(x^\star)) }\\  &+&
  \frac 1 {\lambda_{min, I-\PA}(B)} \tau
||(I-\PA)[\nabla f_y(x^\star) + \nu
\nabla g(x^\star)] ||\\
&\leqslant& \frac{\left[  ||\PA \, \nabla f_y(x^\star)|| +
 \nu ||\PA \, \nabla g(x^\star) ||\right]}{ \lambda_{\min, \PA}(A^T V_{y\exact}(x^\star) A +\nu  B(x^\star))}\\
 &+& \frac { \nu^{-1}  ||(I-\PA) \, \nabla f_y(x^\star)|| + ||(I-\PA) \, \nabla g(x^\star) ||} {\lambda_{\min, I-\PA}(B(x^\star))} ,
\end{eqnarray*}
and we have the last statement of the lemma due to
$$
(I-\PA) \,\nabla f_y(x^\star) = (I-\PA) \, A^T \nabla \tilde{f}_y(Ax^\star) =0
$$
and $(I-\PA) \, \nabla g(x^\star) = 0$ due to the Karush-Kuhn-Tucker conditions since $x^\star$ is the solution  of the minimisation problem \eqref{eq:xstar} and  $x^\star$ is an interior point of $\X$.

\end{proof}


\begin{lemma}\label{lem:Interlace}
\begin{enumerate}
\item $||(C+\delta I)^{-1} x|| \leqslant (\delta +
    \lambda_k(C))^{-1} || P_C x|| + \delta^{-1} ||(I-P_C)x||$

where $k=\rank(C)$ and $\lambda_k(C)$ is the smallest
positive eigenvalue of $C$, and $P_C = C^\dag C$ is the
corresponding projection matrix.

\item Cauchy's interlacing theorem \cite{Bhatia}: let
    $C=C^T$ be a $n\times n$ matrix, $L$ any $n-k$ dimensional
    linear subspace, and $C_L = P_L C P_L$. Then, for any
    $j=1,\dots,n-k$,
$$
\lambda_j(C) \geqslant \lambda_j(C_L) \geqslant \lambda_{j+k} (C),
$$
where $\lambda_1(M) \geq \lambda_2(M) \geq \ldots \geq \lambda_{n}(M)$ are ordered eigenvalues of a matrix $M$. 

\item $\lambda_{\rm min\, pos}(A^T D A) \geqslant \min_{D_i>0}
    D_i \lambda_{\rm min\, pos} (A^T A)$ where $D$ is a
    diagonal matrix with non-negative entries $D_i$ and $\lambda_{\rm min\, pos}(M)$ is the minimum positive eigenvalue of matrix $M$.

\end{enumerate}

\end{lemma}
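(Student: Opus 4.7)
For part (i), the plan is to exploit the simultaneous eigenstructure of $C$, $P_C=C^\dagger C$ and $(C+\delta I)^{-1}$. Since $C$ is symmetric positive semidefinite (it plays the role of a Hessian in the applications in Lemma~\ref{lem:UpperBoundXG_gen}), the spectral theorem gives an orthogonal decomposition $\mathbb{R}^p = \mathrm{range}(P_C)\oplus \ker(C)$, and $(C+\delta I)^{-1}$ preserves each summand. On $\mathrm{range}(P_C)$ the operator $C+\delta I$ has smallest eigenvalue $\delta+\lambda_k(C)$, where $\lambda_k(C)$ is the smallest positive eigenvalue, so its inverse has operator norm $(\delta+\lambda_k(C))^{-1}$. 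On $\ker(C)$ the operator reduces to $\delta I$, whose inverse has norm $\delta^{-1}$. Writing $x=P_Cx+(I-P_C)x$, applying $(C+\delta I)^{-1}$ componentwise, and then the triangle inequality yields the stated bound. This step is routine once the orthogonal splitting is made explicit.

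For part (ii), the interlacing theorem is a classical result, and the plan is simply to quote it from \cite{Bhatia}; no independent proof is needed.

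For part (iii), my plan is to use the Rayleigh quotient characterisation of the smallest positive eigenvalue. Write $A^TDA=(D^{1/2}A)^T(D^{1/2}A)$, so $\ker(A^TDA)=\ker(D^{1/2}A)$. When every $D_i$ is strictly positive (the case relevant to the exponential-family applications in Section~\ref{sec:ExpFamily}, where $V_{y\exact}$ is diagonal with strictly positive entries), $D^{1/2}$ is invertible and hence $\ker(A^TDA)=\ker(A)=\ker(A^TA)$. For any $v\perp\ker(A)$ I would then estimate
\begin{equation*}
v^TA^TDAv \;=\;\sum_i D_i(Av)_i^2 \;\geq\;\min_i D_i\cdot \|Av\|^2 \;=\;\min_i D_i\cdot v^TA^TAv \;\geq\;\min_i D_i\cdot \lambda_{\mathrm{min\,pos}}(A^TA)\,\|v\|^2,
\end{equation*}
and take the infimum over unit vectors $v\perp\ker(A)$ to obtain the claim. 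The main obstacle is precisely the case when some $D_i=0$: then $\ker(A^TDA)$ can strictly contain $\ker(A^TA)$, the two Rayleigh quotients are minimised over different subspaces, and the naive bound with $\min_{D_i>0}D_i$ can fail (one can construct examples where two rows of $A$ coincide, one weight vanishes, and $\lambda_{\mathrm{min\,pos}}$ actually drops by a factor of the multiplicity). To handle this honestly I would replace $A$ by its restriction to the rows with $D_i>0$, i.e.\ set $\tilde A$ = the submatrix of rows indexed by $\{i:D_i>0\}$ and $\tilde D$ the corresponding principal submatrix; then $A^TDA=\tilde A^T\tilde D\tilde A$ with $\tilde D$ strictly positive, and apply the argument above with $\lambda_{\mathrm{min\,pos}}(\tilde A^T\tilde A)$ in place of $\lambda_{\mathrm{min\,pos}}(A^TA)$.

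The only delicate point of the whole lemma is thus the zero-entries issue in (iii); parts (i) and (ii) are standard once the splitting into $P_C$ and $I-P_C$ is taken.
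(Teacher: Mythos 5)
Your proposal is correct, and for part (iii) it takes a genuinely different route from the paper's. The paper derives (iii) from (ii): it passes from $A^TDA$ to $D^{1/2}AA^TD^{1/2}$ (same nonzero spectrum), bounds the latter below by $\min_{D_i>0}D_i$ times $P_DAA^TP_D$ via a congruence on the range of $P_D$, and then applies Cauchy interlacing to compare $P_DAA^TP_D$ with $AA^T$, picking up an index shift $m=\rank(I-P_D)$. Your direct Rayleigh--quotient estimate over unit vectors orthogonal to $\ker(A)$ is more elementary and bypasses the interlacing machinery, at the price of only covering the case where $D$ is strictly positive. Crucially, your warning about the zero-weight case is well founded: statement (iii) is false in general when some $D_i=0$ (take $A=(1,1)^T$ and $D=\diag(1,0)$, so that $\lambda_{\rm min\, pos}(A^TDA)=1<2=\min_{D_i>0}D_i\,\lambda_{\rm min\, pos}(A^TA)$), and the paper's own proof breaks at exactly this point: its final step asserts $\lambda_{r+m}(A^TA)\geqslant\lambda_{\rank(\PA)}(A^TA)$ after establishing $r+m\geqslant\rank(\PA)$, which goes the wrong way for a decreasing eigenvalue ordering as soon as $r+m$ strictly exceeds $\rank(\PA)$ (in the example above the interlacing chain terminates in $\lambda_{2}(AA^T)=0$ and the bound becomes vacuous). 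Your repaired version, with $\tilde A$ the submatrix of positively weighted rows and $\lambda_{\rm min\, pos}(\tilde A^T\tilde A)$ in place of $\lambda_{\rm min\, pos}(A^TA)$, is the correct general statement; since the paper only invokes (iii) with $D=V_{y\exact}(x^\star)$ of full rank (in Lemma~\ref{lem:UpperBoundXG_gen}), nothing downstream is affected. For part (i) the paper merely remarks that it follows from (ii); your explicit orthogonal splitting $\mathbb{R}^p=\mathrm{range}(P_C)\oplus\ker(C)$ (valid since $C$ is symmetric positive semidefinite in all uses) is the natural way to make that precise, and part (ii) is quoted from the literature in both treatments.
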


\begin{proof}[Proof of Lemma~\ref{lem:Interlace}.]

(i) Follows from (ii).

(iii). $\lambda_j(A^T D A) =\lambda_j(D^{1/2} A A^T D^{1/2} )$, and
for $j \geqslant \rank(A^T D A)= \rank(D^{1/2} A A^T D^{1/2} )$,
$$\lambda_j(D^{1/2} A A^T D^{1/2} ) \geqslant \min_{D_i>0} D_i
\lambda_j(P_D A A^T  P_D ) \geqslant \min_{D_i>0} D_i \lambda_{j +
m}( A A^T ) $$ by Cauchy's interlacing theorem, where $m =
\rank(I-P_D)$, and $D$ is $n\times n$ matrix.  

If $j=r=\rank(\PA P_D)$, $\lambda_r(A^T D A)$ is the smallest
positive eigenvalue of $A^T D A$, and 
$$
j+m = \rank(I-P_D) + \rank(\PA P_D) \geqslant \rank(\PA(I-P_D)) + \rank(\PA P_D) = \rank(\PA).
$$
Hence $\lambda_{r+m}(A^T A) \geqslant \lambda_{\rank(\PA)}(A^T A)$, and the latter is the smallest
positive eigenvalue of $A^T A$.

\end{proof}


\begin{lemma}\label{lem:Knapik}
\begin{enumerate}
\item For $a > 0$, $m>0$, $v>0$, $\nu > 0$,
$$
\sum_{i=1}^n \frac{i^{-a-1}}{(1 + \nu^{-1} i^{-m} )^v} \leq C \nu^v \min(\nu^{-1/m},n)^{(vm -a)_+} [\log n]^{I(a=vm)}.
$$

\item For $a\leq 0$, $m>0$, $v>0$ and $0<  \nu  \leq n^{-m}$,
\begin{eqnarray*}
\sum_{i=1}^n \frac{i^{-a-1}}{(1 + \nu^{-1} i^{-m} )^v} &\leq& C \nu^v  n ^{ vm-a }.
\end{eqnarray*}

\item For $m\leq 0$ and  $\nu > 0$,
\begin{eqnarray*}
\sum_{i=1}^n \frac{i^{-a-1}}{(1 + \nu^{-1} i^{-m} )^v} &\leq& C \nu^v  n^{(vm-a)_+} [\log n]^{ I(a = vm )}.
\end{eqnarray*}

\end{enumerate}

\end{lemma}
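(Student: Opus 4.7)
The plan is to separate each sum at the natural threshold where the two terms in the denominator change relative size. Define $i^\star=\nu^{-1/m}$ when $m>0$. For indices $i\le i^\star$ one has $\nu^{-1}i^{-m}\ge 1$, so $(1+\nu^{-1}i^{-m})^v\asymp \nu^{-v} i^{-vm}$ and the summand is equivalent to $\nu^v i^{vm-a-1}$; for $i>i^\star$ one has $\nu^{-1}i^{-m}\le 1$, so the denominator is $\asymp 1$ and the summand is $\asymp i^{-a-1}$. Thus the whole sum splits as
\begin{equation*}
S_n \;\asymp\; \nu^v\sum_{i=1}^{\min(i^\star,n)} i^{\,vm-a-1} \;+\; \mathbf{1}\{i^\star<n\}\sum_{i=i^\star+1}^{n} i^{-a-1},
\end{equation*}
and the rest is routine bookkeeping.

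For statement (i), evaluate each piece by comparison with the integral $\int x^{q-1}dx$. The first piece gives $\nu^v \cdot \min(i^\star,n)^{vm-a}/(vm-a)$ when $vm\neq a$ (absorbing the lower endpoint into the constant since $a>0$), gives $\nu^v\log\min(i^\star,n)$ when $vm=a$, and is $O(\nu^v)$ when $vm<a$; in all sub-cases the dominant $\min(i^\star,n)$ power is $(vm-a)_+$. The second piece, present only when $i^\star<n$, is $\le (i^\star)^{-a}/a = \nu^{a/m}/a$; a direct check shows that in every case $(vm-a)_+<0$, $=0$ or $>0$ this second piece is dominated (up to a multiplicative constant) by the first piece evaluated at $\min(i^\star,n)=i^\star$. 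Combining yields precisely the bound $C\nu^v\min(\nu^{-1/m},n)^{(vm-a)_+}[\log n]^{I(a=vm)}$, where $[\log n]$ may replace $[\log i^\star]$ because $i^\star\le n$ in that case.

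Statement (ii) is the sub-case $i^\star>n$ of the same decomposition: the second piece is empty and only the first sum survives. Since $a\le 0$ one has $vm-a>0$, so the first sum is $\asymp \nu^v n^{vm-a}/(vm-a)$, giving the claimed bound. Statement (iii), with $m\le 0$, is handled analogously except that the roles of ``large'' and ``small'' $i$ are reversed: $\nu^{-1}i^{-m}$ is now non-decreasing in $i$, so if $\nu\le 1$ all indices fall in the ``large-denominator'' regime and the summand is $\asymp \nu^v i^{vm-a-1}$, while if $\nu>1$ one factors out $\nu^v$ and notes the denominator is bounded below by $1$; in either event the remaining sum $\sum_{i=1}^n i^{vm-a-1}$ is controlled by $n^{(vm-a)_+}[\log n]^{I(a=vm)}$.

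The only non-routine point is to keep track of how the power of $\min(i^\star,n)$ and the logarithmic factor arise simultaneously at the boundary cases $vm=a$, and to verify that the ``tail'' piece over $i>i^\star$ is always absorbed by the ``head'' piece; this is the step that requires the most care but reduces to comparing $\nu^{a/m}$ with $\nu^v\min(i^\star,n)^{(vm-a)_+}$ in each of the three sign regimes of $vm-a$. No new ideas beyond integral comparison and case analysis are needed.
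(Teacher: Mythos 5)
Your proof is correct and follows essentially the same route as the paper: split the sum at the threshold $\nu^{-1/m}$ (where the two terms in the denominator balance), bound the head by $\nu^v\sum i^{vm-a-1}$ and the tail by $\sum i^{-a-1}$ via integral comparison, and check that the tail contribution $\nu^{a/m}$ is absorbed by the head in each sign regime of $vm-a$, with the uniform bound $(1+x)^{-v}\le x^{-v}$ handling the case $m\le 0$. The only differences from the paper's argument are cosmetic (the paper writes the two-piece bound directly with explicit constants rather than via $\asymp$ equivalences).
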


\begin{proof}
Assume $m>0$ and denote $N= \lceil \nu^{-1/m} \rceil$. Then, for $v>0$,
\begin{eqnarray*}
\sum_{i=1}^n \frac{i^{-a-1}}{(1 + \nu^{-1} i^{-m} )^v} &\leq&   2^{-v} \nu^{v} \sum_{i=1}^{\min(N,n)}  i^{vm -a -1} + 2^{-v} \sum_{i=\min(N,n)}^n  i^{-a-1}\\
&\leq& C \nu^v \min(\nu^{-1/m},n)^{(vm-a)_+} [\log n]^{I(vm = a)}\\&& + C  \nu^{(a)_+/m} [\log n]^{I(a=0)} I(\nu^{-1/m}<n).
\end{eqnarray*}

If $a>0$ and $\nu^{-1/m} \leq n$, then
\begin{eqnarray*}
\sum_{i=1}^n \frac{i^{-a-1}}{(1 + \nu^{-1} i^{-m} )^v} &\leq&  C \nu^v  \nu^{- (vm -a)_+/m} [\log n]^{I(vm = a)} + C  \nu^{a/m}\\
&=&  C \nu^{\min(v, a/m) }  [\log n]^{I(vm = a)}.
\end{eqnarray*}



If $\nu^{-1/m} > n$, then the sum is bounded by $C  \nu^v  n^{(vm -a)_+} [\log n]^{I(vm = a)}$.

If $m>0$, $v>0$ and $a\leq 0$, then 
\begin{eqnarray*}
\sum_{i=1}^n \frac{i^{-a-1}}{(1 + \nu^{-1} i^{-m} )^v} &\leq&  C \nu^v \min(\nu^{-1/m},n)^{ vm-a }  + C [\log n]^{I(a=0)} I(\nu^{-1/m}<n)\\
&=& C \nu^v n ^{ vm-a }I(\nu^{-1/m}\geq n)  + C [ \nu^{a} + [\log n]^{I(a=0)}] I(\nu^{-1/m}<n).
\end{eqnarray*}
This expression tends to 0 if $  \nu^{-1/m}\geq n$ and $ n = o(\nu^{-v/(vm-a)})$.

If $m\leq 0$ and $v>0$,
\begin{eqnarray*}
\sum_{i=1}^n \frac{i^{-a-1}}{(1 + \nu^{-1} i^{-m} )^v} &\leq& C \nu^v \sum_{i=1}^n  i^{vm-a-1} \\
&\leq& C \nu^v [ I( a > vm) + n^{vm-a} I(a < vm) +\log n I(a = vm ) ]\\
&\leq& C \nu^v  n^{(vm-a)_+} [\log n]^{ I(a = vm )}.
\end{eqnarray*}

\end{proof}



\end{document}